\newcommand{\nc}{\newcommand}
\nc{\browntext}[1]{\textcolor{brown}{#1}}
\nc{\greentext}[1]{\textcolor{green}{#1}}
\nc{\redtext}[1]{\textcolor{red}{#1}}
\nc{\bluetext}[1]{\textcolor{blue}{#1}}
\nc{\brown}[1]{\browntext{ #1}}
\nc{\green}[1]{\greentext{ #1}}
\nc{\red}[1]{\redtext{ #1}}
\nc{\blue}[1]{\bluetext{ #1}}
\nc{\zb}[1]{\redtext{From zb: #1}}
\def \tt{\textup{\texttt{t}}}
\newtheorem{theorem}{Theorem}  [section]
\newtheorem{corollary}[theorem]{Corollary}
\newtheorem{lemma}[theorem]{Lemma}
\newtheorem{proposition}[theorem]{Proposition}
\newtheorem{definition}[theorem]{Definition}
\theoremstyle{remark}
\newtheorem{remark}[theorem]{Remark}
\numberwithin{equation}{section}
\def\lr#1{\langle #1\rangle}
\def \n{\mathfrak{b}}
\def \ct{{\mathcal T}}
\def \bF{\mathbf{F}}
\numberwithin{equation}{section}
\def \diag{\mathrm{diag}}
\newcommand{\mbf}{\mathbf}
\newcommand{\ev}{\bar{0}}
\newcommand{\N}{\mathbb N}
\newcommand{\odd}{\bar{1}}
\newcommand{\ov}{\overline}
\newcommand{\qbinom}[2]{\begin{bmatrix} #1\\#2 \end{bmatrix} }
\newcommand{\Q}{\mathbb Q}
\newcommand{\U}{\mbf U}
\newcommand{\arxiv}[1]{\href{http://arxiv.org/abs/#1}{\tt arXiv:\nolinkurl{#1}}}
\newcommand{\Ui}{{\mbf U}^\imath}
\newcommand{\vs}{\varsigma}
\newcommand{\Z}{\mathbb Z}
\def \fg{\mathfrak{g}}
\def \bU{{\mathbf U}}
\newcommand{\tK}{\widetilde{K}}
\def \I{\mathbb{I}}
\def \bv{v}
\newcommand{\tUi}{\widetilde{{\mathbf U}}^\imath}
\newcommand{\tU}{\widetilde{\mathbf U}}
\newcommand{\tk}{\Bbbk}
\def \ff{B}
\def \ffy{y}
\def \bvs{{\boldsymbol{\varsigma}}}
\begin{document}

\title[Analogue of Feigin's map on  $\imath$quantum group of split type]
{Analogue of Feigin's map on $\imath$quantum group\\ of split type}
\author[Ming Lu]{Ming Lu}
\address{Department of Mathematics, Sichuan University, Chengdu 610064, P.R.China}
\email{luming@scu.edu.cn}

\author[Shiquan Ruan]{Shiquan Ruan}
\address{School of Mathematical Sciences,
Xiamen University, Xiamen 361005, P.R.China}
\email{sqruan@xmu.edu.cn}

\author[Haicheng Zhang]{Haicheng Zhang}
\address{School of Mathematical Sciences, Nanjing Normal University, Nanjing 210023, P.R.China.}
\email{zhanghc@njnu.edu.cn}


\subjclass[2020]{Primary 17B37, 05E10, 17B67.}

\keywords{$\imath$Quantum group; Quantum torus; Feigin's map; Integration map}

\begin{abstract}
The (universal) $\imath$quantum groups are as a vast generalization of (Drinfeld double) quantum groups.
We establish an algebra homomorphism from universal $\imath$quantum group of split type to a certain quantum torus, which can be viewed as an $\imath$analogue of Feigin's map on the quantum group.
\end{abstract}

\maketitle


\section{Introduction}

\subsection{Feigin's map and Hall algebras}
Let ${\mathbf U}^+$ be the positive part of the quantum group associated to a generalized Cartan matrix $C\in M_{\I\times\I}(\mathbb{Z})$, and $\mathbf{i}=(i_1i_2\cdots i_r)$ be a word made up of elements in $\I$.
The Feigin homomorphism $\bF_{\mathbf{i}}$ (also called Feigin's map) from ${\mathbf U}^+$ to a quantum torus $\mathcal{T}_\mathbf{i}$ is an algebra homomorphism, which was proposed by Feigin in 1992 as an elementary tool for studying the skew-field of fractions of ${\mathbf U}^+$. Feigin gave a conjecture that $\bF_{\mathbf{i}}$ is injective if $C$ is of finite type and $\mathbf{i}$ is a word associated to a reduced expression of the longest element in the Weyl group of $C$, which has been proved in \cite{IM} for a special case and in \cite{Jos} for the general case. The Feigin homomorphism provides a realization of ${\mathbf U}^+$ as a subalgebra of a quantum torus, which leads to a construction of monomial bases for the quantum groups of finite type (cf. \cite{Rei}, \cite{Fu}).

In 1990, Ringel \cite{R90a} introduced the Hall algebra of a finite dimensional algebra $A$ over a finite field, and he \cite{Rin90,R90a} proved that if $A$ is a representation-finite hereditary algebra, the Ringel--Hall algebra of $A$ provides a realization of the positive part of the corresponding quantum group. For any hereditary algebra $A$, Green \cite{Gr95} proved that the corresponding quantum group ${\mathbf U}^+$ can be realized as a subalgebra, called composition subalgebra, of the Ringel--Hall algebra of $A$.
The Feigin homomorphism $\bF_{\mathbf{i}}$ has been generalised by Berenstein and Rupel \cite{Ber} to the Ringel--Hall algebra of a hereditary algebra. For a special choice of the word $\mathbf{i}$, the corresponding Feigin homomorphism $\bF_{\mathbf{i}}$ coincides with Reineke homomorphism (also called integration map) on Hall algebra (cf. \cite[Lemma 6.1]{Rei2}, \cite[Example
10]{Fei}).

\subsection{Feigin's map and cluster algebras}
Cluster algebras were introduced by Fomin and Zelevinsky~\cite{FZ} with the aim to set up a combinatorial framework for the study of total positivity  in algebraic groups and canonical bases in quantum groups. The quantum cluster algebras were introduced by Berenstein and Zelevinsky~\cite{BZ05}. The Caldero--Chapoton map (called cluster character) is introduced in \cite{CC} to establish the relation between categories of representations  of quivers and cluster algebras. The multiplication theorem of cluster characters given by Caldero and Keller~\cite{CK2005} indicates that there should be a close relation between cluster algebras and Hall algebras. The quantum cluster characters were formulated by Rupel \cite{Rupel1}, and the mutation multiplication formulas of quantum cluster characters for any acyclic valued quivers were obtained by Rupel \cite{Rupel2}.

Using the generalised Feigin homomorphism, Berenstein and Rupel \cite{Ber} introduced the generalised quantum cluster characters and obtained a quantum cluster structure on the quantum unipotent cell corresponding to the square of a Coxeter element. Recently, by using the integration homomorphism on the Hall algebra of the morphism category of projective modules over a hereditary algebra, as well as its bialgebra structure, Fu, Peng and Zhang~\cite{FPZ} provided an intrinsic realization of quantum cluster characters via Hall algebra approach. Subsequently, Chen, Ding and Zhang \cite{CDZ} developed the approach used in ~\cite{FPZ} and obtained the quantum version of Caldero--Keller's cluster multiplication theorem. In a word, the Feigin homomorphism plays a significant role in studying the relations between quantum groups, Hall algebras and quantum cluster algebras.

The topic of cluster realization of the entire quantum group has received many attentions and has been extensively studied. Schrader and Shapiro \cite{SS} provided a cluster algebra realisation of the quantum group ${\mathbf U}$ of type ${\rm A}$, that is, they constructed an explicit algebra embedding of ${\mathbf U}$ into a quotient of a quantum cluster algebra. Goncharov and Shen \cite{GS} have generalised the construction of Schrader and Shapiro to other finite types.

\subsection{Quantum symmetric pairs}

The notion of quantum symmetric pairs was introduced by Letzter \cite{Let99,Let02} (see \cite{Ko14} for an extension to Kac--Moody setting), which is a pair consisting of a quantum group ${\mathbf U}$ and its coideal subalgebra ${\mathbf U}^\imath$. The subalgebra ${\mathbf U}^\imath$ is called an $\imath$quantum group (and also called quantum symmetric pair coideal subalgebra in \cite{Let99,Ko14}). Following the classification of symmetric pairs of complex simple Lie algebras, the $\imath$quantum groups are classified by Satake diagrams. Quantum groups can be viewed as $\imath$quantum groups of diagonal types. So, the notion of $\imath$quantum groups is a vast generalisation of the notion of quantum groups. Many fundamental constructions and applications of quantum groups have found their generalisations in $\imath$quantum groups. See \cite{Wang} for a survey of some recent progress on quantum symmetric pairs
and applications. For example, Lu and Wang \cite{LW19a,LW20a} have succeeded in initiating an $\imath$Hall algebra approach for the categorification of quasi-split $\imath$quantum groups of Kac--Moody type. In fact, they use Hall algebras to realize the \emph{universal} quasi-split $\imath$quantum groups $\tUi$ which are introduced in \cite{LW19a} as subalgebras of the Drinfeld double quantum groups $\tU$; the usual $\imath$quantum groups $\Ui=\Ui_\bvs$ (depending on parameters $\bvs$) are reproduced by central reduction of $\tUi$.

The constructions of Schrader--Shapiro and Goncharov--Shen can be interpreted as the cluster realisations of $\imath$quantum groups of diagonal types. Thus, it is natural to expect cluster realisations of general $\imath$quantum groups such that fundamental constructions of $\imath$quantum groups admit
cluster interpretations. Recently, Song \cite{Song} has provided a cluster realisation of split universal $\imath$quantum group $\widetilde{\bU}^\imath$ of type ${\rm AI}$.
That is, he gave an algebra embedding of $\widetilde{\bU}^\imath$ of type ${\rm AI}$ into a quantum cluster algebra. The key ingredient of this algebra embedding is to get an algebra homomorphism from $\imath$quantum group $\widetilde{\bU}^\imath$ to a certain quantum torus.

\subsection{Goal}
In order to get cluster realisations of any split universal $\imath$quantum groups, as a first step, in this paper we define an algebra homomorphism from universal $\imath$quantum group of split type to a certain quantum torus associated to a valued quiver $Q$, which can be viewed as an $\imath$version of the Feigin homomorphism.

\subsection{Strategy}
We know universal $\imath$quantum group $\tUi$ of split type is defined as a subalgebra of Drinfeld double quantum group $\tU$, and  a Serre type presentation of $\tUi$ is established in \cite{CLW18,LW20a}, the most important relation is the $\imath$Serre relation \eqref{iserrerelation} formulated via $\imath$divided powers \cite{BW18a,BeW18,LW20a}.

The $\imath$analog of Feigin homomorphism $\varphi_{\mathbf{i}}$ for a word $\mathbf{i}$ in $\I$ is defined on generators of $\tUi$, and the main strategy is to prove that $\varphi_{\mathbf{i}}$ preserves $\imath$Serre relations, this is much more difficult than Feigin's map $\bF_{\mathbf{i}}$ for $\U^+$. For this purpose,
we first derive the closed formulas for the images of the $\imath$divided powers in the quantum torus; see Propositions \ref{prop:idivided-even}--\ref{prop:idivided-odd}. Next we convert the $\imath$Serre relation (which are formulated via $\imath$divided powers) in the quantum torus, and eventually reduce the proof further to some combinatorial identities (see \eqref{ou0}, \eqref{evenp=1} and \eqref{oddbdym}), for example: for any $m\geq0$, $0\leq |b|,\ell\leq m$,
\begin{align*}
     &\sum_{k=0}^{m} \sum\limits_{s=0}^{2k}
           [b(2\ell+1-4k+4s)]_{v_i}\qbinom{2m+1}{2k}_{v_i}\Big(\qbinom{2k-1}{s}_{v_i^2}+\qbinom{2k-1}{s-1}_{v_i^2}\Big)\qbinom{2m-2k+1}{m-\ell-s}_{v_i^2}=0,
    \end{align*}
    \begin{align*}
       \begin{split}
     \sum_{k=0}^{m} \sum\limits_{s=0}^{2k+1}
            [b(2\ell-1-4k+4s)]_{v_i}\qbinom{2m+1}{2k+1}_{v_i}\Big(\qbinom{2k}{s}_{v_i^2}+\qbinom{2k}{s-1}_{v_i^2}\Big)\qbinom{2m-2k}{m-\ell-s}_{v_i^2}=0.
             \end{split}
    \end{align*}

It turns out to be difficult to prove these quantum binomial identities directly; readers are encouraged to take on the challenge and attempt to prove them directly. We find a way around this by first proving them for $b=m$ and $0\leq \ell\leq m$, which is equivalent to establishing the Feigin homomorphism $\varphi_{\mathbf{i}}$ for the associated quiver $Q$ without $2$-cycles; see Proposition \ref{teshuqx}. Then we prove that this is enough to establish the Feigin homomorphism $\varphi_{\mathbf{i}}$ for general valued quiver $Q$ by using the non-standard Serre--Lusztig relations given in \cite{CLW21}. As a consequence, these general combinatorial identities hold; see Proposition \ref{cor:identities}.

The Feigin homomorphism $\varphi_{\mathbf{i}}$ for $\mathbf{i}=(1,2,\dots,N)$, where $\I=\{1,2,\dots,N\}$, directly induces the integration map $\varphi$ for $\Ui$ with some distinguished parameters $\bvs^\diamond$ given in \eqref{eq:disting-para}; see Corollary \ref{cor:iQG-TQ}.
It is remarkable that this homomorphism $\varphi$ has been obtained in \cite{Go24} for the $q$-Onsager algebra (the $\imath$quantum group $\Ui$ of affine $\mathfrak{sl}_2$).

    \subsection{Organization}
The paper is organized as follows: we recall the definitions and properties of quantum groups, $\imath$quantum groups, quantum tori, and present the main result in Section \ref{sec:main-result}. In Section \ref{sec:i-divided-QT}, we formulate the expansion formulas for $\imath$divided powers in quantum torus. Section \ref{sec:iSerre} is devoted to expanding the $\imath$Serre relations in quantum torus, and reducing the proof to some combinatorial identities. In Section \ref{sec:without2-loop}, we prove the main theorem for quivers without $2$-cycles.
We complete the proof of the main theorem in Section \ref{sec:proof-Main}. In Appendix \ref{sec:Comb-identities}, we provide several necessary combinatorial identities.

\vspace{0.5cm}
\noindent{\bf Acknowledgments.}
M. Lu is partially supported by the National Natural Science Foundation of China (No. 12171333, 12261131498).
S. Ruan is partially supported by the Natural Science Foundation of Xiamen (No. 3502Z20227184),
Fujian Provincial Natural Science Foundation of China (Nos. 2024J010006 and 2022J01034),
the National Natural Science Foundation of China (No. 12271448), and the Fundamental
Research Funds for Central Universities of China (No. 20720220043).
H. Zhang is partially supported by the National Natural Science Foundation of China (No. 12271257) and the Natural Science Foundation of Jiangsu Province of China (No. BK20240137).

\section{Main result}
\label{sec:main-result}

In this section, we recall the definitions of quantum groups, $\imath$quantum groups and quantum tori, and then formulate the main result.
\subsection{Universal $\imath$quantum groups of split type}
Let $\I=\{1,2,\dots,N\}$ be the index set.
We always assume that $C=(c_{ij})_{i,j \in \I}$ is a symmetrizable generalized Cartan matrix (GCM) with a symmetrizer $D={\rm diag}(d_i\mid i\in\I)$.
That is, $DC$ is symmetric.
Denote by $\fg$ the Kac--Moody Lie algebra corresponding to $C$.

Throughout the paper, let $\bv$ be an indeterminant and $\mathbb{Q}(v)$ be the rational function field. We write $[a, b]=ab-ba$ for two elements $a,b$ in an algebra and set $v_i=v^{d_i}$ for each $i\in\I$. For any integer $n$, define
$$[n] =\frac{v^n-v^{-n}}{v-v^{-1}}.$$
Thus $[-n]=-[n]$.
For any positive integer $m$,  define $[m]^! =\prod\limits_{i=1}^m [i]$ and write $[0]^!=1$.
For any integers $n,d$, define
\begin{align*}
\qbinom{n}{d}  &=
\begin{cases}
\frac{[n][n-1]\ldots [n-d+1]}{[d]^!}, & \text{ if }d > 0,
\\
1,& \text{ if }d=0,\\
0, & \text{ if }d<0.
\end{cases}
\end{align*}
For any element $c\in\mathbb{Q}(v)$, we write $[n]_{c}$, $[m]^!_c$ and $\qbinom{n}{d}_c$ for the evaluations of $[n]$, $[m]^!$ and $\qbinom{n}{d}$ at $v=c$, respectively.

For any integers $a,b,t$ with $t\geq0$, we have the following formulas (cf. \cite[\S0.2]{Jan}, \cite[\S1.3.1]{Lus93})
\begin{align}
\label{fkhgs}
\qbinom{a}{t}=v^{-\epsilon t}\qbinom{a-1}{t}+v^{\epsilon(a-t)}\qbinom{a-1}{t-1},
\\
\label{2.2}
\qbinom{a+b}{t}=\sum_{t_1+t_2=t}
v^{\epsilon(at_2-bt_1)}\qbinom{a}{t_1}\qbinom{b}{t_2},
\end{align}
where $\epsilon=\pm1$.

The \emph{Drinfeld double quantum group} $\tU := \tU_\bv(\fg)$ is defined to be the $\Q(\bv)$-algebra generated by $E_i,F_i, \tK_i,\tK_i'$, $i\in \I$, 
subject to the following relations {for $i,j\in\I$}:
\begin{align}
	[E_i,F_j]= \delta_{ij} \frac{\tK_i-\tK_i'}{\bv_i-\bv_i^{-1}},  &\qquad [\tK_i,\tK_j]=[\tK_i,\tK_j']  =[\tK_i',\tK_j']=0,
	\label{eq:KK}
	\\
	\tK_i E_j=\bv_i^{c_{ij}} E_j \tK_i, & \qquad \tK_i F_j=\bv_i^{-c_{ij}} F_j \tK_i,
	\label{eq:EK}
	\\
	\tK_i' E_j=\bv_i^{-c_{ij}} E_j \tK_i', & \qquad \tK_i' F_j=\bv_i^{c_{ij}} F_j \tK_i',
	\label{eq:K2}
\end{align}
and the quantum Serre relations {for $i\neq j \in \I$}:
\begin{align}
	& \sum_{r=0}^{1-c_{ij}} (-1)^r  E_i^{(r)} E_j  E_i^{(1-c_{ij}-r)}=0,
	& \sum_{r=0}^{1-c_{ij}} (-1)^r   F_i^{(r)} F_j  F_i^{(1-c_{ij}-r)}=0,
	\label{eq:serre2}
\end{align}
where \[
F_i^{(n)} =F_i^n/[n]^!_{v_i}, \quad E_i^{(n)} =E_i^n/[n]^!_{v_i}, \quad \text{ for } n\ge 1, i\in \I.
\]
Note that $\tK_i \tK_i'$ is central in $\tU$ for any $i$.

Analogously, the Drinfeld-Jimbo \emph{quantum group} $\bU:=\bU_\bv(\fg)$ is defined to be the $\Q(v)$-algebra generated by $E_i,F_i, K_i, K_i^{-1}$, $i\in \I$, subject to the relations modified from \eqref{eq:KK}--\eqref{eq:serre2} with $\tK_i$ and $\tK_i'$ replaced by $K_i$ and $K_i^{-1}$, respectively.

The {\em universal $\imath$quantum group (of split type)} $\widetilde{\bU}^\imath$ is defined in \cite{LW19a} (cf.  \cite{Let99,Ko14})
to be the $\Q(v)$-subalgebra of $\tU$ generated by
\[
B_i= F_i +  E_{i} \tK_i'~~\text{and}~~
\tk_i = \tK_i \tK_{i}'~~\text{for~any}~i \in \I.
\]
Let $\Z/2\Z=\{\ov{0},\ov{1}\}$. For each $i\in \I$, according to \cite{BW18a, BeW18,CLW18,LW20a}, the {\em $\imath${}divided powers} of $B_i$ are defined to be
\begin{eqnarray}
	&&\ff_{i,\odd}^{(m)}=\frac{1}{[m]^!_{v_i}}\left\{ \begin{array}{ccccc} B_i\prod\limits_{s=1}^k (B_i^2-v_i\tk_i[2s-1]_{v_i}^2 ) &  \qquad\text{if }m=2k+1, \\
		\prod\limits_{s=1}^k (B_i^2-v_i\tk_i[2s-1]_{v_i}^2) &\text{if }m=2k; \end{array}\right.
	\label{eq:iDPodd}
	\\
	&&\ff_{i,\ev}^{(m)}= \frac{1}{[m]^!_{v_i}}\left\{ \begin{array}{ccccc} B_i\prod\limits_{s=1}^k (B_i^2-v_i\tk_i[2s]^2_{v_i} ) & \qquad\text{if }m=2k+1,\\
		\prod\limits_{s=1}^{k} (B_i^2-v_i\tk_i[2s-2]_{v_i}^2) &\text{if }m=2k. \end{array}\right.
	\label{eq:iDPev}
\end{eqnarray}

\begin{theorem}[\text{\cite[Theorem 4.2]{LW20a}}; see also \text{\cite[Theorem~3.1]{CLW18}}]
	\label{thm:Serre}
	Fix $\ov{p}_i\in \Z/2\Z$ for each $i\in \I$. The {universal $\imath$quantum group} $\widetilde{\bU}^\imath$ has a presentation with generators $B_i, \tk_i$ $(i\in \I)$ and the relations for any $i,j\in\I$:
	\begin{align}
&\tk_i\tk_j=\tk_j\tk_i,\quad\quad\tk_i B_j=B_j\tk_i,\\
			&\sum_{n=0}^{1-c_{ij}} (-1)^n  B_{i, \overline{p_i}}^{(n)}B_j B_{i,\overline{c_{ij}}+\overline{p}_i}^{(1-c_{ij}-n)}=0,\quad\text{if}~~ i\neq j.
		\label{iserrerelation}
	\end{align}
\end{theorem}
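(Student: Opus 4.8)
The plan is to prove the statement in two halves. First, one checks that the displayed relations genuinely hold among the elements $B_i = F_i + E_i\tK_i'$ and $\tk_i = \tK_i\tK_i'$ inside $\tU$, so that the abstract $\Q(v)$-algebra $\widetilde{\bU}'$ presented by the generators $B_i,\tk_i$ and those relations surjects onto $\widetilde{\bU}^\imath$. Second, one shows this surjection is injective, i.e. that no further relations are needed.

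For the first half, the relations $\tk_i\tk_j = \tk_j\tk_i$ and $\tk_i B_j = B_j\tk_i$ are immediate from the centrality of each $\tK_i\tK_i'$ in $\tU$ noted after \eqref{eq:serre2}. The substance is the $\imath$Serre relation \eqref{iserrerelation}. Since it involves only the two indices $i,j$, I would reduce it to a computation in the rank-two subalgebra of $\tU$ attached to those indices, i.e. to $2\times 2$ GCMs $C$. Expanding each $B_{i,\overline{p}_i}^{(n)}$ by \eqref{eq:iDPodd}--\eqref{eq:iDPev}, substituting $B_i = F_i + E_i\tK_i'$, and moving the central $\tk_i$'s aside, the left side of \eqref{iserrerelation} splits into its pure-$F$ part, its pure-$E$ part, and mixed parts; the pure-$F$ part is the quantum Serre relation \eqref{eq:serre2} (hence $0$), the pure-$E$ part is \eqref{eq:serre2} again after accounting for the accumulated powers of $\tK_i'$, and the mixed parts must cancel — which is exactly what the quadratic corrections $v_i\tk_i[2s-1]_{v_i}^2$, $v_i\tk_i[2s]_{v_i}^2$, $v_i\tk_i[2s-2]_{v_i}^2$ in \eqref{eq:iDPodd}--\eqref{eq:iDPev} are tuned to arrange. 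To cut the work it suffices to treat one parity, say $\overline{p}_i = \overline{0}$ for all $i$, and transfer to arbitrary $(\overline{p}_i)$ via the recursions expressing $B_i\ff_{i,\ev}^{(m)}$ and $B_i\ff_{i,\odd}^{(m)}$ as $\Q(v)[\tk_i]$-combinations of $\imath$divided powers of the opposite parity. This rank-two identity — really an infinite family, one for each $c_{ij}\in\Z_{\le 0}$ — is the main obstacle: it is the only genuinely computational input, and it is a prototype of the $\imath$divided-power bookkeeping that recurs, in a harder form, later in the paper.

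For the second half, let $\pi:\widetilde{\bU}'\twoheadrightarrow\widetilde{\bU}^\imath$ be the surjection just described; I must show it is injective, and I would do so by exhibiting a spanning set of $\widetilde{\bU}'$ that $\pi$ sends to a linearly independent subset of $\tU$. Spanning: using centrality of the $\tk_i$, every element of $\widetilde{\bU}'$ is a $\Q(v)[\tk_i:i\in\I]$-combination of monomials in the $B_i$, and the $\imath$Serre relations \eqref{iserrerelation} straighten such monomials exactly as the ordinary quantum Serre relations \eqref{eq:serre2} straighten monomials in $\U^-$ (the standard PBW straightening), the only difference being that the subleading terms of \eqref{iserrerelation} carry a factor $\tk_i$ and involve strictly fewer $B$'s; so a downward induction on the number of $B$-factors shows $\widetilde{\bU}'$ is spanned by $\{\,B_{i_1}\cdots B_{i_r}\prod_{i\in\I}\tk_i^{n_i}\,\}$ with $B_{i_1}\cdots B_{i_r}$ ranging over a fixed PBW monomial basis of $\U^-$ (that the leading-term computation is the same for every $(\overline{p}_i)$ is again supplied by the even/odd recursions). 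Linear independence: expanding $B_i = F_i + E_i\tK_i'$ and filtering $\tU$ by the number of $E$-factors, the leading term of $B_{i_1}\cdots B_{i_r}\prod_i\tk_i^{n_i}$ is $F_{i_1}\cdots F_{i_r}\prod_i(\tK_i\tK_i')^{n_i}$, and these are linearly independent by the triangular decomposition $\tU\cong\U^-\otimes\tU^0\otimes\U^+$ together with the PBW bases of $\U^-$ and of $\tU^0$. Hence $\pi$ carries a spanning set bijectively onto a linearly independent set and is therefore an isomorphism.
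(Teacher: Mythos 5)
The paper does not prove this theorem; it quotes it from \cite[Theorem 3.1]{CLW18} and \cite[Theorem 4.2]{LW20a}, so there is no internal proof for your argument to be matched against. Your two-stage plan (verify the relations hold among $B_i=F_i+E_i\tK_i'$ and $\tk_i$ inside $\tU$; then show the surjection from the presented algebra is injective by producing a spanning set that maps to a linearly independent set in $\tU$) has the right shape and is broadly the direct route of \cite{CLW18}; \cite{LW20a} instead obtains the presentation via the $\imath$Hall algebra realization of $\tUi$, a genuinely different path.

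Three parts of the sketch need repair. (a) In the injectivity step you assert that, filtering $\tU$ by the number of $E$-factors, the leading term of $B_{i_1}\cdots B_{i_r}\prod\tk_i^{n_i}$ is $F_{i_1}\cdots F_{i_r}\prod(\tK_i\tK_i')^{n_i}$. That is not quite correct: the zero-$E$ component in the triangular decomposition picks up extra Cartan-only corrections from $[E_i,F_i]=\frac{\tK_i-\tK_i'}{v_i-v_i^{-1}}$; already $B_i^2$ projects to $F_i^2+\frac{v_i^2(\tK_i\tK_i'-(\tK_i')^2)}{v_i-v_i^{-1}}$, not to $F_i^2$. The fix is to project instead onto the lowest weight space in the $\Z\I$-grading of $\tU$ (weight $-\sum_j e_{i_j}$), which does isolate $F_{i_1}\cdots F_{i_r}\prod(\tK_i\tK_i')^{n_i}$ because the unwanted Cartan terms sit in strictly higher weight, and then run a downward induction on weight. (b) Your parity reduction is not supported by the recursions you invoke: the recursions express $B_iB_{i,\ov{0}}^{(m)}$ as a $\Q(v)[\tk_i]$-combination of $B_{i,\ov{0}}^{(m\pm1)}$, staying in the $\ov{0}$ family (likewise for $\ov{1}$), and when $c_{ij}$ is odd the relation \eqref{iserrerelation} already involves both parity families at once for a single fixed $\ov p_i$; the meaningful case split is by the parity of $c_{ij}$, not by $\ov p_i$. (c) Most substantively, the cancellation of the mixed $E$--$F$ components of \eqref{iserrerelation} in rank two is the real content of the theorem, and you give no argument for it. Your remark about the pure-$E$ side is correct (the powers of $v$ accumulated from moving the $\tK'$-factors across depend only on $n+n'=1-c_{ij}$, not on $n$, so the $E$-Serre relation applies verbatim), but the middle weights are exactly where the $\tk_i$-quadratic corrections built into \eqref{eq:iDPodd}--\eqref{eq:iDPev} have to perform, and that verification is the computation carried out in \cite{CLW18}. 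Until it is done the surjection $\widetilde{\bU}'\twoheadrightarrow\tUi$ is not even established.
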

The relations \eqref{iserrerelation} are called the $\imath$Serre relations. The Serre--Lusztig relations (also called higher order $\imath$Serre relations) for $\imath$quantum groups are given in \cite{CLW21}, where they also give the following non-standard Serre--Lusztig relations.

\begin{proposition}[\text{\cite[Theorem B]{CLW21}}]
  \label{prop:SLinduct}
Fix $\ov{p}_i\in \Z/2\Z$ for each $i\in \I$. For any $i \neq j\in \I$ and non-negative integer $t$, the following identity holds in $\tUi$
\begin{align}
\sum_{s+s'=1-c_{ij}+2t} (-1)^s B^{(s)}_{i, \overline{p_i}} B_j B_{i,\overline{c_{ij}}+\overline{p}_i}^{(s')}=0.
\label{eq:Serre2t}
\end{align}
\end{proposition}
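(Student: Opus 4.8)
The plan is to prove \eqref{eq:Serre2t} by induction on $t$, the base case $t=0$ being precisely the $\imath$Serre relation \eqref{iserrerelation} of Theorem~\ref{thm:Serre}. Abbreviate
\[
Z_t \;:=\; \sum_{s+s' = 1-c_{ij}+2t} (-1)^s\, B^{(s)}_{i,\ov{p_i}}\, B_j\, B^{(s')}_{i,\ov{c_{ij}}+\ov{p_i}} \;\in\; \tUi ,
\]
so the goal is $Z_t=0$ for all $t\ge 0$. The engine would be the split rank-one multiplication formula for $\imath$divided powers, immediate from the definitions \eqref{eq:iDPodd}--\eqref{eq:iDPev} (cf.\ \cite{BeW18,BW18a,LW20a}):
\[
B_i\, B^{(n)}_{i,\ov{a}} \;=\; B^{(n)}_{i,\ov{a}}\, B_i \;=\; [n+1]_{v_i}\, B^{(n+1)}_{i,\ov{a}} \;+\; \delta_{\ov{n},\ov{a}}\; v_i[n]_{v_i}\, \tk_i\, B^{(n-1)}_{i,\ov{a}} ,
\]
the left--right symmetry being valid because $\tk_i$ commutes with both $B_i$ and $B_j$ by Theorem~\ref{thm:Serre}. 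A preliminary remark I would use: a parity count (based on $(s-p_i)+(s'-c_{ij}-p_i)\equiv 1\pmod 2$) shows that in each summand of $Z_{t-1}$ exactly one of the two $\imath$divided powers has index of the same parity as its label, so multiplying by $B_i$ once on each side, or twice on one side, never produces a ``double-correction'' monomial.

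For the inductive step I would assume $Z_{t-1}=0$ and expand $B_i^2 Z_{t-1}$, $B_i Z_{t-1} B_i$ and $Z_{t-1} B_i^2$ via the formula above. After reindexing, each becomes a combination over $\Q(v)[\tk_i]$ of monomials $B^{(u)}_{i,\ov{p_i}}\,B_j\,B^{(u')}_{i,\ov{c_{ij}}+\ov{p_i}}$, split into a ``main'' part of total $B_i$-degree $1-c_{ij}+2t$ and a ``correction'' part of total $B_i$-degree $1-c_{ij}+2(t-1)$ carrying an overall factor $v_i\tk_i$, the coefficients being explicit products of two $v_i$-quantum integers that depend on the parity of $u$. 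Writing $h:=1-c_{ij}+2(t-1)$, I would take $(\alpha,\beta,\gamma)$ to be the solution --- unique up to scalar, nonzero for $t\ge 1$ --- of $\alpha+\beta v_i^{\pm(h+1)}+\gamma v_i^{\pm 2(h+1)}=0$, i.e.\ $(\alpha,\beta,\gamma)=\bigl(1,\,-(v_i^{h+1}+v_i^{-(h+1)}),\,1\bigr)$. The crucial point is that this \emph{same} choice forces both the main coefficients \emph{and} the correction coefficients to be independent of $u$: writing each quantum integer as a difference of two powers of $v_i$, in each case the condition amounts to the vanishing of the coefficients of $v_i^{2u}$ and $v_i^{-2u}$, which turn out to be exactly the two equations just displayed, in all parity configurations. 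Granting this,
\[
\alpha\, B_i^2 Z_{t-1} \;+\; \beta\, B_i Z_{t-1} B_i \;+\; \gamma\, Z_{t-1} B_i^2 \;=\; c\, Z_t \;+\; c'\, v_i\tk_i\, Z_{t-1}
\]
for scalars $c,c'\in\Q(v)$, and evaluating the ($u$-independent) main coefficient at $u=0$ gives $c=[\,2t-c_{ij}\,]_{v_i}\,[\,1+2t-c_{ij}\,]_{v_i}$, which is nonzero for $t\ge 1$ (since then $2t-c_{ij}\ge 2$). As the left-hand side vanishes by the inductive hypothesis, $c\,Z_t=0$, hence $Z_t=0$.

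The hard part is the ``simultaneity'' just invoked: showing that the $(\alpha,\beta,\gamma)$ dictated by wanting the degree-$(1-c_{ij}+2t)$ part to be a multiple of $Z_t$ also makes the $\tk_i$-correction a multiple of $Z_{t-1}$, and not some genuinely new combination of $\imath$divided-power monomials of degree $h$. Doing this carefully requires the precise shape of the correction coefficients in the multiplication formula and a routine but somewhat tedious bookkeeping over the four parity cases for $(\ov{p_i},\ov{c_{ij}})$; it should reduce to a handful of elementary $v_i$-quantum-integer identities of the flavour of \eqref{fkhgs}--\eqref{2.2}. A fallback route would be to derive \eqref{eq:Serre2t} instead from the standard higher-order Serre--Lusztig relations of \cite{CLW21} together with closed formulas for $\imath$divided powers; but reconciling the two families of relations does not look obviously simpler, so the direct induction seems the cleaner path.
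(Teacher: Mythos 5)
This proposition is imported into the paper purely as a citation of \cite[Theorem B]{CLW21}; the paper gives no proof of it, so there is no in-house argument to compare against, and what you have produced is an independent proof. Your inductive scheme is correct, and the ``simultaneity'' you flag as the hard point does indeed hold. Concretely, set $h=1-c_{ij}+2(t-1)$. Expanding $\alpha B_i^2 Z_{t-1}+\beta B_i Z_{t-1}B_i+\gamma Z_{t-1}B_i^2$ via your rank-one multiplication rule and collecting terms, the coefficient of the degree-$(h{+}2)$ monomial $B^{(u)}_{i,\ov{p_i}}B_jB^{(u')}_{i,\ov{c_{ij}}+\ov{p_i}}$ is
\[
(-1)^u\bigl(\alpha[u-1]_{v_i}[u]_{v_i}-\beta[u]_{v_i}[u']_{v_i}+\gamma[u'-1]_{v_i}[u']_{v_i}\bigr),
\]
while the coefficient of the degree-$h$ monomial carrying the factor $v_i\tk_i$ is
\[
(-1)^u\bigl(\alpha[u]_{v_i}^2+\gamma[h+1-u]_{v_i}^2-\beta[u]_{v_i}[h+1-u]_{v_i}\bigr)
\]
when $\ov{u}=\ov{p_i}$, and the same expression with $u$ replaced by $u+1$ otherwise. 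After clearing $(v_i-v_i^{-1})^{-2}$, the $u$-dependence of both expressions sits only in the coefficients of $v_i^{\pm2u}$, and those coefficients vanish if and only if $\alpha+\beta v_i^{\pm(h+1)}+\gamma v_i^{\pm2(h+1)}=0$, which is exactly your condition; it kills the $u$-dependence of the main part and of the $\tk_i$-correction simultaneously, as you hoped. With $(\alpha,\beta,\gamma)=\bigl(1,-(v_i^{h+1}+v_i^{-(h+1)}),1\bigr)$ the constant parts evaluate to
\[
B_i^2Z_{t-1}-(v_i^{h+1}+v_i^{-(h+1)})B_iZ_{t-1}B_i+Z_{t-1}B_i^2=[h+1]_{v_i}[h+2]_{v_i}\,Z_t+[h+1]_{v_i}^2\,v_i\tk_i\,Z_{t-1},
\]
and since $h+1=2t-c_{ij}\geq 2$ for $t\geq1$, the scalar $[h+1]_{v_i}[h+2]_{v_i}$ is nonzero and the induction closes. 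The boundary indices $u\in\{0,1,h+1,h+2\}$, at which not all three of $\alpha,\beta,\gamma$ actually contribute, are handled automatically because the corresponding quantum integers vanish (for example $[u-1]_{v_i}[u]_{v_i}=0$ at $u=0,1$). So your proof is sound and self-contained; whether it coincides with the argument actually used in \cite{CLW21} cannot be judged from the present paper, which only invokes the result.
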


Let $\bvs=(\vs_i)\in  (\Q(\bv)^\times)^{\I}$.
	By \cite{Let99,Ko14}, the Letzter's $\imath$quantum group  $\Ui:=\Ui_{\bvs}$ is the $\Q(v)$-subalgebra of $\bU$ generated by
	\[
	B_i= F_i+\vs_i E_{i}K_i^{-1},\quad\forall i\in\I.
	\]
    By \cite[Proposition 6.2]{LW19a}, the $\Q(v)$-algebra $\Ui$ is isomorphic to the quotient of $\tUi$ by the ideal generated by $\tk_i-\vs_i$ ($i\in\I$). It is well known that the $\Q(v)$-algebra $\Ui_\bvs$ (up to some field extension) is isomorphic
to $\tUi_{\bvs^\diamond}$ for some distinguished parameters $\bvs^\diamond$ (cf. \cite{Let99}, \cite[Proposition 9.2]{Ko14}; see also \cite[Proposition 5]{CLW18}).
  In this paper, we always assume the distinguished parameters $\bvs^\diamond$ to be \begin{align}
  \label{eq:disting-para}
     \vs_i^\diamond=-v_i^{-1}(v_i-v_i^{-1})^2,\qquad \forall i\in\I,
  \end{align}
  and $\Ui=\Ui_{\bvs^\diamond}$; compare with \cite[eq. (7.1)]{LW21}.

It is remarkable that $\tU$ and $\tUi$ given here are different from those defined in \cite{LW19a}, since we do not require the elements  $\tK_i$, $\tK_i'$, $\tk_i$ ($i\in\I$) to be invertible (cf. \cite{Qin16}).

\subsection{Quantum torus}
Recall the GCM $C=(c_{ij})_{i,j \in \I}$ and its symmetrizer $D=\diag(d_i\mid i\in\I)$. According to \cite[\S0.1]{Deng}, we associate with $C$ a valued graph $\Gamma=\Gamma(C)$. Explicitly, the vertex set of $\Gamma$ is just $\I$, and for any vertices $i\neq j$ there are $n_{ij}=-\frac{d_ic_{ij}}{d_{i,j}}$ parallel edges connecting $i$ and $j$, where $d_{i,j}$ is the least common multiple of $d_i$ and $d_j$. Moreover, each vertex $i$ is assigned the value $d_i$, and each edge $\rho$ between any vertices $i,j$ is given the value $m_\rho=d_{i,j}$.

Given an orientation arrangement of the valued graph $\Gamma$, we have a valued quiver $Q$ with the vertex set $Q_0=\I$ and arrow set $Q_1$. By the definition of $\Gamma$, $Q$ has no loops. For an arrow $\rho\in Q_1$, denote the tail and head of $\rho$ by ${\rm t}\rho$ and ${\rm h}\rho$, respectively. Let $\mathbb{Z}\I$ be the free abelian group with basis $\{e_i~|~i\in\I\}$. The {\em Euler form} of $Q$ is defined to be the bilinear form $\lr{-,-}_Q:\mathbb{Z}\I\times \mathbb{Z}\I\rightarrow\mathbb{Z}$ given by
$$\lr{{\bf x},{\bf y}}_Q=\sum\limits_{i\in Q_0}d_ix_iy_i-\sum\limits_{\rho\in Q_1}m_{\rho}x_{{\rm t}\rho}y_{{\rm h}\rho}$$
for any ${\bf x}=\sum\limits_{i\in\I}x_ie_i$ and ${\bf y}=\sum\limits_{i\in\I}y_ie_i$. It is easy to see that $\lr{e_i,e_j}_Q=-r_{ij}d_{i,j}$ for any $i\neq j$, where $r_{ij}$ is the number of arrows from $i$ to $j$ in $Q$. Thus, $d_i~|~\lr{e_i,e_j}_Q$ and $d_j~|~\lr{e_i,e_j}_Q$.

The {\em symmetric Euler form} $(-,-)_Q:\mathbb{Z}\I\times \mathbb{Z}\I\rightarrow\mathbb{Z}$ is defined by
$$({\bf x},{\bf y})_Q:=\lr{{\bf x},{\bf y}}_Q+\lr{{\bf y},{\bf x}}_Q$$ for any ${\bf x},{\bf y}\in\mathbb{Z}\I$. It is easy to see that
$$({\bf x},{\bf y})_Q={\bf x}^{\rm tr}(DC){\bf y}$$
for any ${\bf x},{\bf y}\in\mathbb{Z}\I$. In particular, we have $(e_i,e_j)_Q=d_ic_{ij}$ for any $i,j\in\I$.

\begin{definition}
{Let $\mathbf{i}=(i_1i_2\cdots i_r)$ be a word made up of elements in $\I$. The {quantum torus} $\mathcal{T}_\mathbf{i}(Q)$ is defined to be the $\Q(v)$-algebra generated by $x_1^{\pm1},\ldots,x_r^{\pm1}$ subject to the relations
\begin{align*}
&x_ix_i^{-1}=x_i^{-1}x_i=1~~\text{for~any}~1\leq i\leq r,\\
&x_lx_k=v^{\lr{e_{i_k},e_{i_l}}_Q-\lr{e_{i_l},e_{i_k}}_Q}x_kx_l~~\text{for~any}~1\leq k,l\leq r.
\end{align*}}
\end{definition}

By definition, clearly, there exists an algebra involution
\begin{align}\label{involution}
   \sigma: \mathcal{T}_\mathbf{i}(Q)\longrightarrow \mathcal{T}_\mathbf{i}(Q),\quad x_i\longmapsto x_i^{-1}, \quad \text{for~any\;} 1\leq i\leq r.
\end{align}

\subsection{Main result}
Now we can give our main result of this paper as follows.
{\begin{theorem}\label{mainthm}
Given a valued quiver $Q$ associated to $C$ and a word $\mathbf{i}=(i_1i_2\cdots i_r)$,
there exists an algebra homomorphism
	\begin{align*}
		\varphi_{\mathbf{i}}: \tUi &\longrightarrow \mathcal{T}_\mathbf{i}(Q)
	\end{align*}
defined on generators by
	\begin{align}
    \label{eq:phi-form1}
    &B_i \longmapsto \sum\limits_{1\leq k\leq r:i_k=i}(x_k+x_k^{-1}),
    \\
    \label{eq:phi-form2}
    &\tk_i\longmapsto-v_i^{-1}(v_i-v_i^{-1})^2(\sum\limits_{1\leq k\leq r:i_k=i}x_k)(\sum\limits_{1\leq k\leq r:i_k=i}x_k^{-1}),
    \end{align}
for any $i\in\I$.
\end{theorem}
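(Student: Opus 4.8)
The plan is to verify that the assignment \eqref{eq:phi-form1}--\eqref{eq:phi-form2} is compatible with the defining relations of $\tUi$ listed in Theorem~\ref{thm:Serre}, for one fixed choice of the parities $\ov{p}_i$ (say $\ov{p}_i=\ov{0}$ for all $i$); by that presentation this produces the algebra homomorphism $\varphi_{\mathbf{i}}$. Write $X_i^{+}=\sum_{k:\,i_k=i}x_k$ and $X_i^{-}=\sum_{k:\,i_k=i}x_k^{-1}$, so that $\varphi_{\mathbf{i}}(B_i)=X_i^{+}+X_i^{-}$ and $\varphi_{\mathbf{i}}(\tk_i)=-v_i^{-1}(v_i-v_i^{-1})^2X_i^{+}X_i^{-}$. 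The key elementary fact is that the letters $x_k$ of a fixed colour $i_k=i$ pairwise commute in $\mathcal{T}_{\mathbf{i}}(Q)$, since $\lr{e_i,e_i}_Q-\lr{e_i,e_i}_Q=0$; hence $X_i^{+}$, $X_i^{-}$ and every Laurent polynomial in $\{x_k:i_k=i\}$ lie in a common commutative Laurent subalgebra $\mathcal{T}^{(i)}$. The two commutation relations $\tk_i\tk_j=\tk_j\tk_i$ and $\tk_iB_j=B_j\tk_i$ then come essentially for free: $\varphi_{\mathbf{i}}(\tk_i)$ is a $\Q(v)$-combination of balanced monomials $x_kx_l^{-1}$ with $i_k=i_l=i$, and the braiding scalars picked up when $x_k$ and $x_l^{-1}$ are moved past any generator $x_m^{\pm1}$ are mutually inverse (both determined by $\lr{e_i,e_{i_m}}_Q-\lr{e_{i_m},e_i}_Q$), so $\varphi_{\mathbf{i}}(\tk_i)$ is central in $\mathcal{T}_{\mathbf{i}}(Q)$.

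The heart of the matter is the $\imath$Serre relations \eqref{iserrerelation}, which I would handle in three stages. \emph{Stage 1: images of the $\imath$divided powers.} Using $\varphi_{\mathbf{i}}(\tk_i)$ and the elementary identity $2+(v_i-v_i^{-1})^2[a]_{v_i}^2=v_i^{2a}+v_i^{-2a}$, each quadratic factor $B_i^2-v_i\tk_i[a]_{v_i}^2$ that appears in \eqref{eq:iDPodd}--\eqref{eq:iDPev} maps inside $\mathcal{T}^{(i)}$ to the factored form $(X_i^{+}+v_i^{2a}X_i^{-})(X_i^{+}+v_i^{-2a}X_i^{-})$. Since $\mathcal{T}^{(i)}$ is commutative, the products defining the $\imath$divided powers are ordinary products of Laurent polynomials, and a $q$-binomial expansion yields closed formulas for $\varphi_{\mathbf{i}}(B_{i,\ov{0}}^{(m)})$ and $\varphi_{\mathbf{i}}(B_{i,\ov{1}}^{(m)})$ as explicit Laurent polynomials in $\{x_k:i_k=i\}$ whose coefficients are products of Gaussian binomials in $v_i$ and in $v_i^{2}$, modified by a $v_i^{\pm1}$ shift recording the parity label and the parity of $m$ (these are Propositions~\ref{prop:idivided-even}--\ref{prop:idivided-odd}). \emph{Stage 2: reduction to combinatorial identities.} Substituting the Stage~1 formulas into \eqref{iserrerelation}, inserting $\varphi_{\mathbf{i}}(B_j)=X_j^{+}+X_j^{-}$ in the middle slot and collecting the coefficient of each Laurent monomial, one uses that the only nontrivial braiding occurring is between a colour-$i$ letter and a colour-$j$ letter, which records $\lr{e_i,e_j}_Q=-r_{ij}d_{i,j}$; after clearing the common monomial and scalars, the vanishing of $\varphi_{\mathbf{i}}$ on \eqref{iserrerelation} becomes equivalent to a family of $q$-binomial identities of exactly the form displayed in the introduction, indexed by $m$ (controlled by $1-c_{ij}$ and by the number of colour-$i$ letters used), by $\ell$, and by a parameter $b$ measuring the imbalance between the arrows $i\to j$ and $j\to i$, subject to $0\le|b|,\ell\le m$.

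\emph{Stage 3: proving the identities} is where I expect the real difficulty. A direct attack on these $q$-binomial identities seems hard, so I would first establish the extreme case $|b|=m$. This case is exactly the assertion that $\varphi_{\mathbf{i}}$ is a homomorphism when the valued quiver $Q$ has no $2$-cycle between $i$ and $j$ (one of $r_{ij},r_{ji}$ being $0$), where the relevant identity collapses to a single block of Gaussian binomials and can be proved by a direct induction on $m$; I would isolate this as Proposition~\ref{teshuqx}. To pass from $|b|=m$ to arbitrary $b$ --- that is, from a $2$-cycle-free orientation of the underlying valued graph to a general one --- I would use the non-standard Serre--Lusztig relations \eqref{eq:Serre2t} of Proposition~\ref{prop:SLinduct}: the extra $2$-cycles are absorbed by enlarging the summation range from $1-c_{ij}$ to $1-c_{ij}+2t$, and for a suitable $t$ the already-proved $|b|=m$ identities force all the remaining ones (Proposition~\ref{cor:identities}). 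Feeding the conclusion of Stage~3 back through Stage~2 gives the $\imath$Serre relations under $\varphi_{\mathbf{i}}$ for an arbitrary valued quiver $Q$, which completes the verification. The points that will require the most care are: fixing the normalizations in Stage~1 so that Stage~2 lands on a tractable identity; bookkeeping the two Gaussian bases $v_i$ and $v_i^{2}$ together with the parity shifts coming from $\ov{p}_i$ and $\ov{c_{ij}}+\ov{p}_i$; and arranging the Serre--Lusztig comparison in Stage~3 so that it bridges the $2$-cycle-free case and the general one precisely.
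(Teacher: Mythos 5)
Your proposal follows essentially the same architecture as the paper's proof: reduce to verifying the $\imath$Serre relations (the centrality of $\varphi_{\mathbf{i}}(\tk_i)$ disposes of the remaining relations), derive closed expansion formulas for the $\imath$divided powers in the quantum torus, reduce their vanishing to a family of $q$-binomial identities indexed by $(m,\ell,b)$, prove the extremal $|b|=m$ (2-cycle-free) case, and bootstrap the general $b$ by passing to an auxiliary Cartan matrix and invoking the non-standard Serre--Lusztig relations of Proposition~\ref{prop:SLinduct} exactly as you sketch. Two minor differences, neither affecting correctness: the paper obtains Propositions~\ref{prop:idivided-even}--\ref{prop:idivided-odd} by a recursion on $m$ rather than via your (correct) factorization $\varphi_{\mathbf{i}}(B_i^2-v_i\tk_i[a]_{v_i}^2)=(X_i^{+}+v_i^{2a}X_i^{-})(X_i^{+}+v_i^{-2a}X_i^{-})$ followed by a Gaussian-binomial expansion, and the paper verifies the $\imath$Serre relations for both parities $\ov{p}_i\in\{\ov0,\ov1\}$ rather than exploiting, as you observe, that fixing a single parity already suffices by Theorem~\ref{thm:Serre}.
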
}

\begin{proof}
Note that $(\sum\limits_{1\leq k\leq r:i_k=i}x_k)(\sum\limits_{1\leq k\leq r:i_k=i}x_k^{-1})$ is central in $\mathcal{T}_\mathbf{i}(Q)$ for any $i\in\I$.
So we only need to prove the $\imath$Serre relations \eqref{iserrerelation} are preserved under $\varphi_{\mathbf{i}}$, this will be  done in the remainder of this paper. 
\end{proof}

Recall $\Ui=\Ui_{\bvs^\diamond}$. Let $\ct(Q)$ be the quantum torus associated to the word $\mathbf{i}=(1,2,\dots,N)$ with $\I=\{1,2,\dots,N\}$.
Then we have the following corollary.
\begin{corollary}
\label{cor:iQG-TQ}
Given a valued quiver $Q$ associated to the GCM $C$, there exists an algebra homomorphism
\begin{align*}
\varphi:\Ui\longrightarrow \ct(Q),~~ B_i\longmapsto x_i+x_i^{-1}, \quad \forall i\in\I.
\end{align*}
\end{corollary}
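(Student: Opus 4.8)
The plan is to deduce Corollary~\ref{cor:iQG-TQ} from Theorem~\ref{mainthm} by specializing the word and passing to the central quotient. Take the particular word $\mathbf{i}=(1,2,\dots,N)$, so that each index $i\in\I$ appears exactly once at position $k=i$ in $\mathbf{i}$. For this word there is a unique $k$ with $i_k=i$, namely $k=i$, and the quantum torus $\mathcal{T}_\mathbf{i}(Q)$ has generators $x_1^{\pm1},\dots,x_N^{\pm1}$ with commutation relations $x_lx_k=v^{\lr{e_{i_k},e_{i_l}}_Q-\lr{e_{i_l},e_{i_k}}_Q}x_kx_l$; this is exactly the torus denoted $\ct(Q)$ in the statement. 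First I would invoke Theorem~\ref{mainthm} for this choice of $\mathbf{i}$ to obtain the algebra homomorphism $\varphi_{\mathbf{i}}:\tUi\to\ct(Q)$. Because the inner sums in \eqref{eq:phi-form1}--\eqref{eq:phi-form2} collapse to single terms, $\varphi_{\mathbf{i}}$ sends $B_i\mapsto x_i+x_i^{-1}$ and $\tk_i\mapsto -v_i^{-1}(v_i-v_i^{-1})^2\,x_i x_i^{-1}=-v_i^{-1}(v_i-v_i^{-1})^2$, a scalar.

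Next I would compare this scalar with the distinguished parameter. By the definition \eqref{eq:disting-para}, $\vs_i^\diamond=-v_i^{-1}(v_i-v_i^{-1})^2$, so $\varphi_{\mathbf{i}}(\tk_i)=\vs_i^\diamond\cdot 1$ for every $i\in\I$. Hence $\varphi_{\mathbf{i}}$ kills each element $\tk_i-\vs_i^\diamond$, i.e. the ideal $\cJ$ of $\tUi$ generated by $\{\tk_i-\vs_i^\diamond\mid i\in\I\}$ lies in $\ker\varphi_{\mathbf{i}}$. Recall from the discussion after \cite[Proposition 6.2]{LW19a} cited in the excerpt that $\Ui=\Ui_{\bvs^\diamond}$ is isomorphic to the quotient $\tUi/\cJ$. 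Therefore $\varphi_{\mathbf{i}}$ factors uniquely through $\tUi/\cJ\cong\Ui$, yielding an algebra homomorphism $\varphi:\Ui\to\ct(Q)$, and tracing the generator $B_i$ through the quotient map and $\varphi_{\mathbf{i}}$ gives $\varphi(B_i)=x_i+x_i^{-1}$, as claimed.

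There is essentially no hard step here: the content is all in Theorem~\ref{mainthm}, whose proof occupies the rest of the paper. The only things to check carefully are bookkeeping matters — that the torus $\mathcal{T}_{(1,2,\dots,N)}(Q)$ is literally the algebra called $\ct(Q)$ with the correct sign/exponent in the $q$-commutation relation, and that the identification $\Ui_{\bvs^\diamond}\cong\tUi/\cJ$ matches generators so that the induced map on $B_i$ is the expected one. Since $\tk_i$ maps to a nonzero scalar and the $B_i$ remain the algebra generators after the quotient, no invertibility subtlety arises. I expect the whole argument to be a few lines once Theorem~\ref{mainthm} is in hand.
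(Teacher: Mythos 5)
Your proposal is correct and follows exactly the route the paper indicates: specialize Theorem~\ref{mainthm} to the word $\mathbf{i}=(1,2,\dots,N)$ so that the sums in \eqref{eq:phi-form1}--\eqref{eq:phi-form2} collapse, observe that $\varphi_{\mathbf{i}}(\tk_i)=-v_i^{-1}(v_i-v_i^{-1})^2=\vs_i^\diamond$, and factor through the central quotient $\tUi/(\tk_i-\vs_i^\diamond)\cong\Ui_{\bvs^\diamond}=\Ui$. This is the same argument the paper has in mind when it states that $\varphi_{\mathbf{i}}$ for this word ``directly induces'' the integration map.
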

Following \cite{Rei2}, we also call $\varphi:\Ui\rightarrow \ct(Q)$ the integration map of $\Ui$.

\begin{remark}
    The $\imath$quantum group $\Ui$ of affine $\mathfrak{sl}_2$ is also called to be $q$-Onsager algebra (see \cite{T93}). For the Kronecker quiver $Q:\xymatrix{1\ar@<0.5ex>[r] \ar@<-0.5ex>[r]& 2}$, the morphism $\varphi:\Ui\rightarrow \ct(Q)$ in Corollary \ref{cor:iQG-TQ}
    for this special case was obtained in \cite{Go24}.
\end{remark}

\section{$\imath$Divided powers in the quantum torus}
\label{sec:i-divided-QT}

Let $\mathbf{i}=(i_1i_2\cdots i_r)$ be a word made up of elements in $\I$, and $\ct_\mathbf{i}(Q)$ the quantum torus. In this section, we shall define $\imath$divided powers in $\ct_{\mathbf{i}}(Q)$ and formulate their expansion formulas.

\subsection{$\imath$Divided powers in quantum torus}
For each $i\in\I$, set in $\ct_\mathbf{i}(Q)$:  $$X_i=\sum\limits_{1\leq k\leq r:i_k=i} x_k,\quad Y_i=\sum\limits_{1\leq k\leq r:i_k=i} x_k^{-1},  {\text{\;\;and\;\;} } u_i=X_iY_i=Y_iX_i.$$
It is easy to see that for any $i,j\in\I$ and any $t\geq0$, we have
\begin{align}
X_jX_i^t&=v^{t(\lr{e_i,e_j}_Q-\lr{e_j,e_i}_Q)}X_i^tX_j,~~&Y_jY_i^t=v^{t(\lr{e_i,e_j}_Q-\lr{e_j,e_i}_Q)}Y_i^tY_j,\;\; \\
X_jY_i^t&=v^{-t(\lr{e_i,e_j}_Q-\lr{e_j,e_i}_Q)}Y_i^tX_j,~~&Y_jX_i^t=v^{-t(\lr{e_i,e_j}_Q-\lr{e_j,e_i}_Q)}X_i^tY_j,\\
u_iX_j&=X_ju_i,~~&u_iY_j=Y_ju_i.\qquad\qquad\qquad\qquad
\end{align}
For any positive integer $m$, set $$y_{i,m}=X_i^m+Y_i^{m}~~\text{and}~~y_{i,-m}=0.$$ For convenience, we set $y_{i,0}=1$.  Then we have
\begin{align}
y_{i,m} y_{i,n}=y_{i,n} y_{i,m}=y_{i,m+n}+y_{i,m-n}u_i^n+\delta_{m,n}u_i^n
\end{align}
for any integers $m\geq n>0$.
We simply write $y_i$ for $y_{i,1}$, then
\begin{align}
y_i=X_i+Y_i,\qquad y_i^2=y_{i,2}+2u_i.
\end{align}

For any $i,j\in\I$ with $j\neq i$, set $\n_{ij}=\lr{e_i,e_j}_Q$ and $\n_{ji}=\lr{e_j,e_i}_Q$, then $\n_{ij}+\n_{ji}=d_ic_{ij}$. Let
$\delta'_{x,0}=1-\delta_{x,0}$, i.e.,
\begin{align}
\delta'_{x,0}=\begin{cases} 1\;\;&\text{if }{~x\neq0};\\
                     0 &\text{otherwise.}\end{cases}
\end{align}
\begin{lemma}\label{yxy}
For any $i\neq j\in\I$, $r,t\in\N$, we have
\begin{align*}
y_{i,r}\cdot X_j\cdot y_{i,t}=&v^{t(\n_{ij}-\n_{ji})}X_i^{r+t}X_j+v^{-t(\n_{ij}-\n_{ji})}Y_i^{r+t}X_j\\
&+\delta'_{rt,0}\big(v^{t(\n_{ij}-\n_{ji})}Y_i^rX_i^tX_j+v^{-t(\n_{ij}-\n_{ji})}X_i^rY_i^tX_j\big).
\end{align*}
\end{lemma}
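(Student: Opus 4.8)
The plan is to prove Lemma \ref{yxy} by a direct computation, expanding $y_{i,r} = X_i^r + Y_i^r$ (for $r > 0$) and $y_{i,t} = X_i^t + Y_i^t$ (for $t > 0$), and carefully tracking the four cross-terms $X_i^r X_j X_i^t$, $X_i^r X_j Y_i^t$, $Y_i^r X_j X_i^t$, $Y_i^r X_j Y_i^t$, while also handling the degenerate cases $r = 0$ and/or $t = 0$ where $y_{i,0} = 1$. The key computational inputs are the commutation relations already recorded: $X_j X_i^t = v^{t(\n_{ij} - \n_{ji})} X_i^t X_j$ and $X_j Y_i^t = v^{-t(\n_{ij} - \n_{ji})} Y_i^t X_j$, using $\n_{ij} = \lr{e_i,e_j}_Q$, $\n_{ji} = \lr{e_j,e_i}_Q$. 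Note that $X_i$ and $Y_i$ commute with each other (both being sums of the commuting $x_k^{\pm 1}$ over $k$ with $i_k = i$), so monomials like $Y_i^r X_i^t$ need no reordering.

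First I would dispose of the degenerate cases. If $r = t = 0$, then $y_{i,0} X_j y_{i,0} = X_j$, and on the right-hand side the first two terms give $X_j + X_j$... so actually I need to be careful: when $r=t=0$ the claimed formula reads $v^0 X_i^0 X_j + v^0 Y_i^0 X_j + 0 = 2X_j$, which is wrong unless we interpret $y_{i,0} = 1$ as a single term rather than $X_i^0 + Y_i^0$. So the correct reading is that the lemma is stated for $r, t \geq 1$, or that the $\delta'_{rt,0}$ factor together with a convention handles things; I would check the intended range of validity against how the lemma is used (via $y_{i,m}$ with $m \geq 1$) and state it cleanly. Assuming $r, t \geq 1$: expand $y_{i,r} X_j y_{i,t} = (X_i^r + Y_i^r) X_j (X_i^t + Y_i^t)$ into four terms. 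Using the commutation rules, $X_i^r X_j X_i^t = v^{t(\n_{ij}-\n_{ji})} X_i^{r+t} X_j$; $Y_i^r X_j Y_i^t = v^{-t(\n_{ij}-\n_{ji})} Y_i^{r+t} X_j$; $X_i^r X_j Y_i^t = v^{-t(\n_{ij}-\n_{ji})} X_i^r Y_i^t X_j$; and $Y_i^r X_j X_i^t = v^{t(\n_{ij}-\n_{ji})} Y_i^r X_i^t X_j$. Summing these four gives exactly the right-hand side with $\delta'_{rt,0} = 1$. Then I would separately record the cases where exactly one of $r, t$ is zero (say $t = 0$): $y_{i,r} X_j = (X_i^r + Y_i^r) X_j = X_i^r X_j + Y_i^r X_j$, which matches the formula since the exponent $t(\n_{ij}-\n_{ji}) = 0$ and $\delta'_{rt,0} = 0$ kills the would-be cross terms $Y_i^r X_i^0 X_j + X_i^r Y_i^0 X_j = Y_i^r X_j + X_i^r X_j$ — good, those are precisely the terms that would double-count. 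The symmetric case $r = 0$ is identical.

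This is an entirely mechanical verification, so there is no real "hard part" in the sense of a conceptual obstacle. The one genuine subtlety — and the thing I would flag most carefully — is the bookkeeping role of $\delta'_{rt,0}$: it is there precisely to prevent the formula from double-counting when one of $r,t$ vanishes and the monomials $X_i^{r+t}$, $Y_i^{r+t}$, $Y_i^r X_i^t$, $X_i^r Y_i^t$ collide. I would write the proof by simply stating the four-term expansion, applying the quoted commutation relations term by term, and then checking the two boundary cases to confirm the $\delta'$ convention does what is claimed. I expect the whole argument to be three or four lines once the commutation relations are invoked.
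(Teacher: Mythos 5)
Your proof is correct and takes essentially the same route as the paper's: expand $(X_i^r+Y_i^r)X_j(X_i^t+Y_i^t)$, move $X_j$ past $X_i^t$ or $Y_i^t$ using the stated commutation relations, and collect the four resulting terms. The paper only writes out the case $r,t>0$ and remarks that $r=0$ or $t=0$ is similar; your explicit check of those boundary cases (where $\delta'_{rt,0}=0$ correctly kills the cross terms that would otherwise double-count) is a harmless addition.

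Your flag on the simultaneously degenerate case $r=t=0$ is also a legitimate observation: with the paper's convention $y_{i,0}=1$, the left side is $X_j$ while the displayed right side evaluates to $2X_j$, so as literally stated the identity only holds when at least one of $r,t$ is positive. This does not cause problems downstream, because in the places where the lemma is applied (extracting the coefficient of $X_i^{2\ell+1}X_j u_i^{m-\ell}$ or $X_i^{2\ell+2}X_j u_i^{m-\ell}$ for $0\le\ell\le m$, so the $X_i$-exponent is strictly positive), the term $y_{i,0}X_j y_{i,0}=X_j$ never contributes. Still, it is good hygiene to either restrict the lemma to $(r,t)\neq(0,0)$ or note that the $r=t=0$ case is not used, and you are right to want to pin this down rather than trust the $\delta'$ bookkeeping to magically make everything work.
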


\begin{proof}
We only prove the case of $r,t>0$, and the proof for $r=0$ or $t=0$ is similar.
By definition,
\begin{align*}
y_{i,r}\cdot X_j\cdot y_{i,t}&=(X_i^r+Y_i^r)X_j(X_i^t+Y_i^t)\\
&=(X_i^r+Y_i^r)(v^{t(\n_{ij}-\n_{ji})}X_i^tX_j+v^{-t(\n_{ij}-\n_{ji})}Y_i^tX_j)\\
&=v^{t(\n_{ij}-\n_{ji})}X_i^{r+t}X_j+v^{-t(\n_{ij}-\n_{ji})}Y_i^{r+t}X_j\\
&\quad+v^{t(\n_{ij}-\n_{ji})}Y_i^rX_i^tX_j+v^{-t(\n_{ij}-\n_{ji})}X_i^rY_i^tX_j.
\end{align*}
\end{proof}

For any $m\in\N$, inspired by \eqref{eq:iDPodd}--\eqref{eq:iDPev}, we
define the {\em $\imath$divided powers} of $y_i$ to be:
\begin{eqnarray*}
	&&\ffy_{i,\ev}^{(m)}= \frac{1}{[m]^!_{v_i}}\left\{ \begin{array}{ccccc} y_i\prod\limits_{s=1}^k (y_i^2+(v_i-v_i^{-1})^2[2s]_{v_i}^2 u_i) & \qquad\text{if }m=2k+1,\\
		\prod\limits_{s=1}^{k} (y_i^2+(v_i-v_i^{-1})^2[2s-2]_{v_i}^2 u_i) &\text{if }m=2k; \end{array}\right.
    \\\\
    &&\ffy_{i,\odd}^{(m)}=\frac{1}{[m]^!_{v_i}}\left\{ \begin{array}{ccccc} y_i\prod\limits_{s=1}^k (y_i^2+(v_i-v_i^{-1})^2[2s-1]_{v_i}^2 u_i) &  \qquad\text{if }m=2k+1, \\
		\prod\limits_{s=1}^k (y_i^2+(v_i-v_i^{-1})^2[2s-1]_{v_i}^2u_i) &\text{if }m=2k. \end{array}\right.
\end{eqnarray*}
Then the $\imath$divided powers in $\ct_{\mathbf{i}}(Q)$ satisfy the following recursive relations:
\begin{align}
y_i \ffy_{i,\ev}^{(2k-1)}&=[2k]_{v_i}\ffy_{i,\ev}^{(2k)}\\
\label{ji2kj1}
y_i \ffy_{i,\ev}^{(2k)}&=[2k+1]_{v_i}\ffy_{i,\ev}^{(2k+1)}-(v_i-v_i^{-1})^2[2k]_{v_i}\ffy_{i,\ev}^{(2k-1)}u_i;
\end{align}
and
\begin{align}
y_i \ffy_{i,\odd}^{(2k)}&=[2k+1]_{v_i}\ffy_{i,\odd}^{(2k+1)}\\
y_i \ffy_{i,\odd}^{(2k+1)}&=[2k+2]_{v_i}\ffy_{i,\odd}^{(2k+2)}-(v_i-v_i^{-1})^2[2k+1]_{v_i}\ffy_{i,\odd}^{(2k)}u_i.
\end{align}

The motivation of the  definition of $\imath$divided powers in the quantum torus comes from
\begin{equation}
    \varphi_\mathbf{i}(B_{i,\ov{0}}^{(m)})= y_{i,\ov{0}}^{(m)}~\text{and}~\varphi_\mathbf{i}(B_{i,\ov{1}}^{(m)})= y_{i,\ov{1}}^{(m)}
\end{equation}
for any $i\in\I$.
So in order to  prove that the $\imath$Serre relations \eqref{iserrerelation} are preserved by $\varphi_{\mathbf{i}}$, it is enough to prove
\begin{align}\label{yzgx0}
\sum_{n=0}^{1-c_{ij}} (-1)^n  y_{i,\overline{p_i}}^{(n)}\cdot (X_j+Y_j)\cdot y_{i,\overline{c_{ij}}+\overline{p}_i}^{(1-c_{ij}-n)} &=0
\end{align}
for any $i,j\in\I$ with $j\neq i$ and $\ov{p}_i\in \Z/2\Z$.

\subsection{The expansion formulas}

In order to provide the expansion formulas of $\imath$divided powers $y_{i,\ov{0}}^{(m)}$ and $y_{i,\ov{1}}^{(m)}$, we need the following combinatorial identity.

\begin{lemma}\label{Lus} For any integers $a,n$ with $n\geq0$, we have
$$\qbinom{a+1}{n}=\qbinom{a-1}{n}+(v^{a}+v^{-a})\qbinom{a-1}{n-1}+\qbinom{a-1}{n-2}.$$
\end{lemma}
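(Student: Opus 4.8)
The plan is to prove the identity
\[
\qbinom{a+1}{n}=\qbinom{a-1}{n}+(v^{a}+v^{-a})\qbinom{a-1}{n-1}+\qbinom{a-1}{n-2}
\]
by applying the Pascal-type recursion \eqref{fkhgs} twice, peeling off the top index from $a+1$ down to $a-1$. Concretely, first I would use \eqref{fkhgs} with $\epsilon=+1$ (say) applied to $\qbinom{a+1}{n}$, which writes it as $v^{-n}\qbinom{a}{n}+v^{(a+1-n)}\qbinom{a}{n-1}$. Then I would expand each of $\qbinom{a}{n}$ and $\qbinom{a}{n-1}$ again by \eqref{fkhgs}, this time with a sign of $\epsilon$ chosen so that the powers of $v$ telescope nicely. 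A cleaner bookkeeping device is to use the two different signs $\epsilon=+1$ and $\epsilon=-1$ on the two second-stage expansions, or equivalently to average the two one-step expansions of $\qbinom{a+1}{n}$ against each other; either way the cross terms involving $\qbinom{a-1}{n-1}$ should combine to give exactly the coefficient $v^{a}+v^{-a}$, while the $\qbinom{a-1}{n}$ and $\qbinom{a-1}{n-2}$ terms should come out with coefficient $1$ after the $v$-powers cancel.

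The main step is therefore just careful power-of-$v$ arithmetic: after the double expansion one gets four terms, a multiple of $\qbinom{a-1}{n}$, two multiples of $\qbinom{a-1}{n-1}$, and a multiple of $\qbinom{a-1}{n-2}$, and one must check that the scalar in front of $\qbinom{a-1}{n}$ and of $\qbinom{a-1}{n-2}$ is $1$ and that the two $\qbinom{a-1}{n-1}$ scalars add to $v^a+v^{-a}$. Choosing opposite signs $\epsilon$ at the second stage is what makes the extreme terms' $v$-exponents sum to $0$. Alternatively, one can cite the analogous classical identity for Gaussian binomials (this is a standard "$q$-Pascal applied twice" identity, cf. \cite[\S1.3]{Lus93}) and note that the symmetric normalization $\qbinom{n}{d}=\frac{[n]\cdots[n-d+1]}{[d]^!}$ used here with $[n]=\frac{v^n-v^{-n}}{v-v^{-1}}$ is exactly the balanced version for which the coefficient appears as $v^a+v^{-a}$ rather than $1+v^{2a}$.

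I do not anticipate a serious obstacle here — the only thing to be careful about is sign/exponent bookkeeping and the boundary cases $n=0$ and $n=1$, where some of the $\qbinom{a-1}{n-1}$ or $\qbinom{a-1}{n-2}$ terms vanish by the convention $\qbinom{\cdot}{d}=0$ for $d<0$; one checks directly that the identity still holds in those cases (for $n=0$ both sides are $1$; for $n=1$ the right side is $\qbinom{a-1}{1}+(v^a+v^{-a})$, which equals $[a-1]+v^a+v^{-a}=[a+1]=\qbinom{a+1}{1}$). So the proof is: expand $\qbinom{a+1}{n}$ by \eqref{fkhgs} once, expand the two resulting $\qbinom{a}{\bullet}$ terms by \eqref{fkhgs} again with suitably chosen signs, collect terms, and verify the three coefficients, handling the small values of $n$ separately if desired.
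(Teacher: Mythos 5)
Your primary plan — apply the $q$-Pascal recursion \eqref{fkhgs} twice and collect coefficients — is exactly what the paper does (the paper takes $\epsilon=-1$ in the first expansion and $\epsilon=+1$ in both second-stage expansions; your mirror choice $\epsilon=+1$ then $\epsilon=-1$ for both works just as well, and your $n=0,1$ boundary checks are fine). However, the specific ``cleaner bookkeeping device'' you propose — using $\epsilon=+1$ on one second-stage expansion and $\epsilon=-1$ on the other — does \emph{not} work. With first step $\epsilon=+1$ giving $\qbinom{a+1}{n}=v^{-n}\qbinom{a}{n}+v^{a+1-n}\qbinom{a}{n-1}$, if you then expand $\qbinom{a}{n}$ with $\epsilon=+1$ you get a contribution $v^{-n}\cdot v^{-n}\qbinom{a-1}{n}=v^{-2n}\qbinom{a-1}{n}$, and if instead you expand it with $\epsilon=-1$ but use $\epsilon=+1$ on $\qbinom{a}{n-1}$, the $\qbinom{a-1}{n-2}$ coefficient becomes $v^{2a+2-2n}$ rather than $1$. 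The telescoping requires the \emph{same} sign for both second-stage expansions, opposite to the sign used in the first stage; only then do the two middle terms pick up $v^{a}$ and $v^{-a}$ while the extreme exponents cancel to $0$. So keep the overall two-step expansion, drop the mixed-sign shortcut, and the proof goes through exactly as in the paper.
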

\begin{proof} If $n=0$, it is clear. If $n>0$, by (\ref{fkhgs}), we obtain
      \begin{align*}\qbinom{a+1}{n}&=v^{n}\qbinom{a}{n}+v^{n-(a+1)}\qbinom{a}{n-1}\\
      &=v^{n}\big(v^{-n}\qbinom{a-1}{n}+v^{a-n}\qbinom{a-1}{n-1}\big)+v^{n-(a+1)}\big(v^{-(n-1)}\qbinom{a-1}{n-1}+v^{a-(n-1)}\qbinom{a-1}{n-2}\big)\\
      &=\qbinom{a-1}{n}+(v^{a}+v^{-a})\qbinom{a-1}{n-1}+\qbinom{a-1}{n-2}.
   \end{align*}
\end{proof}

\begin{proposition}
\label{prop:idivided-even}
For any integer $m\geq0$,
\begin{eqnarray*}
	&&\ffy_{i,\ev}^{(m)}= \frac{1}{[m]^!_{v_i}}\begin{cases}
    \sum\limits_{s=0}^k\qbinom{m}{s}_{v_i^2}y_{i,m-2s}u_i^s
    & \text{if }m=2k+1,\\
		   \sum\limits_{s=0}^k(\qbinom{m-1}{s}_{v_i^2}+\qbinom{m-1}{s-1}_{v_i^2})y_{i,m-2s}u_i^s
        &\text{if }m=2k. \end{cases}
\end{eqnarray*}
\end{proposition}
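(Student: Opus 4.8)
The plan is to prove the claimed closed formula for $\ffy_{i,\ev}^{(m)}$ by induction on $m$, using the recursive relations for the $\imath$divided powers together with the combinatorial identity in Lemma~\ref{Lus} and the multiplication rule $y_{i,1}y_{i,n}=y_{i,n+1}+y_{i,n-1}u_i$ (valid for $n\geq 1$, with the $n=1$ case reading $y_i^2=y_{i,2}+2u_i$). Since the formula intertwines an even case ($m=2k$) and an odd case ($m=2k+1$), the induction naturally proceeds in two alternating steps: assuming the even formula for $m=2k$ I would derive the odd formula for $m=2k+1$, and assuming the odd formula for $m=2k+1$ I would derive the even formula for $m=2k+2$.

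First I would check the base cases directly: $m=0$ gives $\ffy_{i,\ev}^{(0)}=1=y_{i,0}$, and $m=1$ gives $\ffy_{i,\ev}^{(1)}=y_i=y_{i,1}$, both matching the stated formula (with the convention $\qbinom{0}{0}_{v_i^2}=1$ and $\qbinom{-1}{s}_{v_i^2}=\delta_{s,-1}\cdot 0$ handled so that only $s=0$ survives). For the inductive step from $2k$ to $2k+1$: by \eqref{ji2kj1} rearranged, $[2k+1]_{v_i}\ffy_{i,\ev}^{(2k+1)} = y_i\ffy_{i,\ev}^{(2k)} + (v_i-v_i^{-1})^2[2k]_{v_i}\ffy_{i,\ev}^{(2k-1)}u_i$. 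I would substitute the induction hypotheses for $\ffy_{i,\ev}^{(2k)}$ and $\ffy_{i,\ev}^{(2k-1)}$, expand $y_i\cdot y_{i,2k-2s}u_i^s = (y_{i,2k-2s+1}+y_{i,2k-2s-1}u_i)u_i^s$, collect the coefficient of each $y_{i,2k+1-2s}u_i^s$, and verify it equals $\frac{[2k]^!_{v_i}}{[2k+1]!_{v_i}}\qbinom{2k+1}{s}_{v_i^2}$ after clearing denominators — i.e. I must check a numerical identity relating $\qbinom{2k-1}{s}_{v_i^2}+\qbinom{2k-1}{s-1}_{v_i^2}$, $\qbinom{2k-1}{s-1}_{v_i^2}+\qbinom{2k-1}{s-2}_{v_i^2}$, $(v_i-v_i^{-1})^2[2k]_{v_i}\qbinom{2k-1}{s-1}_{v_i^2}$ (the contribution from the $u_i$-shift in the last term), and $[2k+1]_{v_i}\qbinom{2k+1}{s}_{v_i^2}$. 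The step from $2k+1$ to $2k+2$ is analogous, using $y_i\ffy_{i,\ev}^{(2k+1)}=[2k+2]_{v_i}\ffy_{i,\ev}^{(2k+2)}-(v_i-v_i^{-1})^2[2k+1]_{v_i}\ffy_{i,\ev}^{(2k)}u_i$.

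The main obstacle will be the bookkeeping in these coefficient identities, particularly the correct treatment of the ``diagonal'' terms where the multiplication rule $y_{i,m}y_{i,n}$ picks up the extra $\delta_{m,n}u_i^n$ (this occurs at the top of the range, when $y_i\cdot y_{i,1}u_i^{k-\text{something}}$ produces $y_{i,2}u_i^{\cdots}+2u_i^{\cdots+1}$ rather than $y_{i,2}u_i^{\cdots}+u_i^{\cdots+1}$ twice), and matching this against the fact that in the target formula $y_{i,0}=1$ with coefficient doubled via $\qbinom{m-1}{k}+\qbinom{m-1}{k-1}$. I expect Lemma~\ref{Lus} to be exactly what resolves the generic coefficient matching: rewriting $\qbinom{2k+1}{s}_{v_i^2}$ via $\qbinom{2k-1}{s}_{v_i^2}+(v_i^{2s}+v_i^{-2s})\qbinom{2k-1}{s-1}_{v_i^2}+\qbinom{2k-1}{s-2}_{v_i^2}$ and identifying $(v_i^{2s}\cdots)$-type factors with the prefactors $v_i^{\pm(\cdots)}$ that arise from commuting $y_i$ past powers of $u_i$ — though since $u_i$ is central here, the $v_i$-powers instead come from the structure of $[n]_{v_i}$ and the identity $(v_i-v_i^{-1})^2[2k]_{v_i}^2 = [2k+1]_{v_i}[2k-1]_{v_i} - [2k]_{v_i}^2 + \cdots$, so I would first record the scalar identity $(v_i-v_i^{-1})^2[a]_{v_i}^2=[a+1]_{v_i}[a-1]_{v_i}-1$ (equivalently $[a+1][a-1]-[a]^2=1$ at $v=v_i$) and use it to absorb the $(v_i-v_i^{-1})^2[2k]_{v_i}u_i$ correction term cleanly. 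Once the generic coefficient is verified, the boundary cases ($s=0$, $s=k$, and the $y_{i,0}$ term) are finite checks.
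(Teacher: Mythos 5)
Your overall strategy matches the paper's: induction on $m$ in two alternating steps (even $\to$ odd via the recursion \eqref{ji2kj1}, odd $\to$ even via $y_i\ffy_{i,\ev}^{(2k+1)}=[2k+2]_{v_i}\ffy_{i,\ev}^{(2k+2)}$), expanding $y_i\cdot y_{i,n}$, collecting coefficients of $y_{i,m-2s}u_i^s$, and applying Lemma~\ref{Lus}. But two of the details you sketched are wrong and would derail the coefficient match if carried out as written. First, the factor produced by Lemma~\ref{Lus} (at $v\mapsto v_i^2$, $a=2k$, $n=s$) is $v_i^{4k}+v_i^{-4k}$, depending only on the row $2k$, not $v_i^{2s}+v_i^{-2s}$ as you wrote; your version would make the target coefficient $s$-dependent in the wrong way. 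Second, the scalar identity you plan to record, $(v_i-v_i^{-1})^2[a]_{v_i}^2=[a+1]_{v_i}[a-1]_{v_i}-1$, is false (and the parenthetical $[a+1][a-1]-[a]^2=1$ has the wrong sign; the correct relation is $[a]^2-[a+1][a-1]=1$, which is irrelevant here anyway). The identity actually needed is the trivial $(v_i-v_i^{-1})^2[a]_{v_i}^2=(v_i^a-v_i^{-a})^2$, so the correction term contributes $(v_i^{2k}-v_i^{-2k})^2\qbinom{2k-1}{s-1}_{v_i^2}=(v_i^{4k}+v_i^{-4k}-2)\qbinom{2k-1}{s-1}_{v_i^2}$, whose $-2$ exactly cancels the double-counted $2\qbinom{2k-1}{s-1}_{v_i^2}$ coming from the two Pascal-type pairs $\qbinom{2k-1}{s}+\qbinom{2k-1}{s-1}$ and $\qbinom{2k-1}{s-1}+\qbinom{2k-1}{s-2}$, leaving precisely the $v_i^{4k}+v_i^{-4k}$ middle term of Lemma~\ref{Lus}. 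Your treatment of the $\delta_{m,n}u_i^n$ boundary term and the doubling at $s=k$ in the even case is correctly anticipated. So: right skeleton, but fix these two scalar bookkeeping points before executing.
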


\begin{proof}
First of all, by definition, we have $\ffy_{i,\ev}^{(0)}=1$, $\ffy_{i,\ev}^{(1)}=y_i$ and $\ffy_{i,\ev}^{(2)}=\frac{1}{[2]_{v_i}}(y_{i,2}+2u_i)$. So the results hold for $m=0,1,2$.

Now we assume that the results hold for any $m\leq 2k$.
We will prove that the results hold for $m=2k+1$ and $m=2k+2$ by induction.

A direct computation shows that
\begin{align*}
y_i \ffy_{i,\ev}^{(2k)}&=\frac{1}{[2k]^!_{v_i}}\sum\limits_{s=0}^{k}(\qbinom{2k-1}{s}_{v_i^2}+\qbinom{2k-1}{s-1}_{v_i^2})y_i y_{i,2k-2s}u_i^s\\
&=\frac{1}{[2k]^!_{v_i}}\sum\limits_{s=0}^{k}(\qbinom{2k-1}{s}_{v_i^2}+\qbinom{2k-1}{s-1}_{v_i^2})(y_{i,2k+1-2s}+y_{i,2k-1-2s}u_i)u_i^s\\
&=\frac{1}{[2k]^!_{v_i}}\Big(\sum\limits_{s=0}^{k}(\qbinom{2k-1}{s}_{v_i^2}+\qbinom{2k-1}{s-1}_{v_i^2}) y_{i,2k+1-2s}u_i^s\\
&\qquad\qquad+\sum\limits_{s=0}^{k-1}(\qbinom{2k-1}{s}_{v_i^2}+\qbinom{2k-1}{s-1}_{v_i^2}) y_{i,2k-1-2s}u_i^{s+1}\Big)\\
&=\frac{1}{[2k]^!_{v_i}}\Big(\sum\limits_{s=0}^{k}(\qbinom{2k-1}{s}_{v_i^2}+\qbinom{2k-1}{s-1}_{v_i^2}) y_{i,2k+1-2s}u_i^s\\
&\qquad\qquad+\sum\limits_{s=1}^{k}(\qbinom{2k-1}{s-1}_{v_i^2}+\qbinom{2k-1}{s-2}_{v_i^2})y_{i,2k+1-2s}u_i^{s}\Big),
\end{align*}
and
\begin{align*}
(v_i-v_i^{-1})^2[2k]_{v_i}\ffy_{i,\ev}^{(2k-1)}u_i
=&(v_i-v_i^{-1})^2[2k]_{v_i}\cdot\frac{1}{[2k-1]^!_{v_i}}\sum\limits_{s=0}^{k-1}\qbinom{2k-1}{s}_{v_i^2}y_{i,2k-1-2s}u_i^{s+1} \\
=&\frac{1}{[2k]^!_{v_i}}(v_i^{2k}-v_i^{-2k})^2 \sum\limits_{s=1}^{k}\qbinom{2k-1}{s-1}_{v_i^2}y_{i,2k+1-2s}u_i^{s}.
\end{align*}
Using \eqref{ji2kj1}, we obtain
\begin{align*}
\ffy_{i,\ev}^{(2k+1)}&= \frac{1}{[2k+1]_{v_i}}(y_i \ffy_{i,\ev}^{(2k)}+(v_i-v_i^{-1})^2[2k]_{v_i}\ffy_{i,\ev}^{(2k-1)}u_i)
\\
&=\frac{1}{[2k+1]^!_{v_i}}\sum\limits_{{{s=0}}}^k (\qbinom{2k-1}{s}_{v_i^2}+\qbinom{2k-1}{s-2}_{v_i^2}+(v_i^{4k}+v_i^{-4k})\qbinom{2k-1}{s-1}_{v_i^2})y_{i,2k+1-2s}u_i^s\\
&=\frac{1}{[2k+1]^!_{v_i}}\sum\limits_{s=0}^k \qbinom{2k+1}{s}_{v_i^2}y_{i,2k+1-2s}u_i^s,
\end{align*}
where the last equality follows from the following identity $$\qbinom{2k-1}{s}_{v_i^2}+\qbinom{2k-1}{s-2}_{v_i^2}+(v_i^{4k}+v_i^{-4k})\qbinom{2k-1}{s-1}_{v_i^2}=\qbinom{2k+1}{s}_{v_i^2},$$
which is due to Lemma \ref{Lus}.
Hence, the result holds for $m=2k+1$.

For $m=2k+2$, we have
\begin{align*}
\ffy_{i,\ev}^{(2k+2)}&= \frac{1}{[2k+2]_{v_i}}y_i \ffy_{i,\ev}^{(2k+1)}
\\
&=\frac{1}{[2k+2]^!_{v_i}}\sum\limits_{s=0}^k\qbinom{2k+1}{s}_{v_i^2}y_i y_{i,2k+1-2s}u_i^s\\
&=\frac{1}{[2k+2]^!_{v_i}}\sum\limits_{s=0}^k\qbinom{2k+1}{s}_{v_i^2}(y_{i,2k+2-2s}u_i^s+y_{i,2k-2s}u_i^{s+1}+\delta_{s,k}u_i^{s+1}).
\end{align*}
    Note that
    \begin{align*}
&\sum\limits_{s=0}^k\qbinom{2k+1}{s}_{v_i^2}y_{i,2k+2-2s}
u_i^{s}=y_{i,2k+2}+\sum\limits_{s=1}^k\qbinom{2k+1}{s}_{v_i^2}y_{i,2k+2-2s}u_i^{s},
\end{align*}
and
    \begin{align*}
&\sum\limits_{s=0}^k\qbinom{2k+1}{s}_{v_i^2}(y_{i,2k-2s}u_i^{s+1}+\delta_{s,k}u_i^{s+1})
\\
&=\sum\limits_{s=0}^{k-1}\qbinom{2k+1}{s}_{v_i^2} y_{i,2k-2s}u_i^{s+1}+\qbinom{2k+1}{k}_{v_i^2}\cdot 2u_i^{k+1}\\
&=\sum\limits_{s=1}^k\qbinom{2k+1}{s-1}_{v_i^2}y_{i,2k-2s+2}u_i^{s}+(\qbinom{2k+1}{k}_{v_i^2}+\qbinom{2k+1}{k+1}_{v_i^2})u_i^{k+1}.
\end{align*}

Hence,
\begin{align*}
\ffy_{i,\ev}^{(2k+2)}&=\frac{1}{[2k+2]^!_{v_i}}\Big(y_{i,2k+2}
+\sum\limits_{s=1}^k(\qbinom{2k+1}{s}_{v_i^2}+\qbinom{2k+1}{s-1}_{v_i^2})y_{i,2k+2-2s}u_i^{s}+\qbinom{2k+1}{k+1}_{v_i^2}\cdot 2u_i^{k+1}\Big)\\
&=\frac{1}{[2k+2]^!_{v_i}}\sum\limits_{s=0}^{k+1}(\qbinom{2k+1}{s}_{v_i^2}+\qbinom{2k+1}{s-1}_{v_i^2})y_{i,2k+2-2s}u_i^{s}.
\end{align*}
So the result holds for $m=2k+2$.
Therefore, we finish the proof.
\end{proof}

By similar arguments as above, we obtain the following.
\begin{proposition}
\label{prop:idivided-odd}
For any  integer $m\geq0$,
\begin{eqnarray*}
	&&\ffy_{i,\odd}^{(m)}= \frac{1}{[m]^!_{v_i}} \begin{cases}
    \sum\limits_{s=0}^k(\qbinom{m-1}{s}_{v_i^2}+\qbinom{m-1}{s-1}_{v_i^2})y_{i,m-2s} u_i^s
    & \text{if }m=2k+1,\\
		   \sum\limits_{s=0}^k\qbinom{m}{s}_{v_i^2}y_{i,m-2s}u_i^s
        &\text{if }m=2k. \end{cases}
\end{eqnarray*}
\end{proposition}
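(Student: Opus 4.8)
The plan is to prove Proposition \ref{prop:idivided-odd} by mimicking the proof of Proposition \ref{prop:idivided-even} almost verbatim, exploiting the close parallel between the definitions of $\ffy_{i,\ev}^{(m)}$ and $\ffy_{i,\odd}^{(m)}$: the only difference is that the factor $[2s-2]_{v_i}^2$ (resp. $[2s]_{v_i}^2$) appearing in the even case is replaced by $[2s-1]_{v_i}^2$ in the odd case, and correspondingly the roles of the even-indexed and odd-indexed formulas get swapped. Concretely, I would set up an induction on $m$, checking the base cases $m=0,1,2$ directly from the definition ($\ffy_{i,\odd}^{(0)}=1$, $\ffy_{i,\odd}^{(1)}=y_i$, $\ffy_{i,\odd}^{(2)}=\frac{1}{[2]_{v_i}}(y_i^2+(v_i-v_i^{-1})^2[1]_{v_i}^2 u_i)=\frac{1}{[2]_{v_i}}(y_{i,2}+(v_i^2-v_i^{-2}+2)u_i)$, which one checks matches $\frac{1}{[2]_{v_i}}\sum_{s=0}^1\qbinom{2}{s}_{v_i^2}y_{i,2-2s}u_i^s$ since $\qbinom{2}{1}_{v_i^2}=v_i^2+v_i^{-2}$ and $y_{i,0}=1$, plus the $\delta$-term from $y_i y_{i,1}=y_{i,2}+2u_i$). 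Then, assuming the two formulas hold for all $m\le 2k$, I would establish them for $m=2k+1$ and $m=2k+2$ using the recursions $y_i\ffy_{i,\odd}^{(2k)}=[2k+1]_{v_i}\ffy_{i,\odd}^{(2k+1)}$ and $y_i\ffy_{i,\odd}^{(2k+1)}=[2k+2]_{v_i}\ffy_{i,\odd}^{(2k+2)}-(v_i-v_i^{-1})^2[2k+1]_{v_i}\ffy_{i,\odd}^{(2k)}u_i$.

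The key computational steps are: (i) apply the multiplication rule $y_i\, y_{i,n}=y_{i,n+1}+y_{i,n-1}u_i+\delta_{n,1}\cdot\big(\text{correction}\big)$ — more precisely $y_i y_{i,n}=y_{i,n+1}+y_{i,n-1}u_i$ for $n\ge 2$ and the displayed identity $y_i^2=y_{i,2}+2u_i$ for $n=1$ — to expand $y_i\ffy_{i,\odd}^{(2k)}$, which (by the inductive formula, the even-indexed one, $\ffy_{i,\odd}^{(2k)}=\frac{1}{[2k]^!_{v_i}}\sum_{s=0}^k\qbinom{2k}{s}_{v_i^2}y_{i,2k-2s}u_i^s$) produces a sum over $s$ with coefficients $\qbinom{2k}{s}_{v_i^2}+\qbinom{2k}{s-1}_{v_i^2}$, exactly the odd-indexed formula for $m=2k+1$; and (ii) expand $y_i\ffy_{i,\odd}^{(2k+1)}$ and combine with the $(v_i-v_i^{-1})^2[2k+1]_{v_i}\ffy_{i,\odd}^{(2k-1)}u_i$ term, where the decisive algebraic input is Lemma \ref{Lus} applied in the form $\qbinom{2k}{s}_{v_i^2}+\qbinom{2k}{s-2}_{v_i^2}+(v_i^{4k+2}+v_i^{-4k-2})\qbinom{2k}{s-1}_{v_i^2}=\qbinom{2k+2}{s}_{v_i^2}$ — this is Lemma \ref{Lus} with $a=2k+1$ and the quantum parameter $v_i^2$, noting $(v_i^{2})^{2k+1}+(v_i^2)^{-(2k+1)}=v_i^{4k+2}+v_i^{-4k-2}$. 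One must also track the $\delta_{s,k}u_i^{s+1}$ boundary terms coming from $y_i^2=y_{i,2}+2u_i$, which contribute the "extra $u_i^{k+1}$" pieces exactly as in the even case, and verify they assemble into the $s=k+1$ term of the claimed formula via $\qbinom{2k+1}{k}_{v_i^2}+\qbinom{2k+1}{k+1}_{v_i^2}=\qbinom{2k+2}{k+1}_{v_i^2}$ (Pascal, from \eqref{fkhgs}).

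I do not expect any genuine obstacle here: the proposition is an "and similarly" statement, and every ingredient needed — the recursions for $\ffy_{i,\odd}^{(m)}$, the multiplication law $y_i y_{i,n}=y_{i,n+1}+y_{i,n-1}u_i$ (plus the $n=1$ exception), and the quantum-binomial identities \eqref{fkhgs}, \eqref{2.2} and Lemma \ref{Lus} — is already in place in the excerpt. The mild bookkeeping annoyance, and the one place to be careful, is the index-shifting in the double sums and the handling of the edge terms ($s=0$, $s=k$, $s=k+1$) where $y_{i,0}=1$ and the $\delta$-corrections enter; getting the telescoping of ranges right (so that $\sum_{s=0}^{k-1}(\cdots)u_i^{s+1}$ becomes $\sum_{s=1}^{k}(\cdots)u_i^s$) is exactly what the even-case proof models, so the odd case follows the same template with the substitution $2k\leftrightarrow 2k+1$ in the relevant binomial parameters. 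The verification of the base case $m=2$ is worth writing out since the pattern of which formula (even-indexed vs odd-indexed) governs which parity is reversed relative to Proposition \ref{prop:idivided-even}.
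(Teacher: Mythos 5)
Your approach is exactly what the paper intends: the paper offers no proof for the odd case, saying only ``By similar arguments as above,'' and your outline correctly transposes the induction from Proposition~\ref{prop:idivided-even} --- the base cases $m=0,1,2$, the two recursions for $\ffy_{i,\odd}^{(m)}$, and the key invocation of Lemma~\ref{Lus} at $a=2k+1$ and parameter $v_i^2$. Two small slips worth noting when you write it out in full: in step (ii) the subtracted term is $(v_i-v_i^{-1})^2[2k+1]_{v_i}\ffy_{i,\odd}^{(2k)}u_i$, not $\ffy_{i,\odd}^{(2k-1)}u_i$ (you quote the recursion correctly earlier, so this is just a typo), and in the odd case the $\delta$-correction and the subtraction both occur in the same step $m=2k+1\to 2k+2$ (unlike the even case, where they fall in different steps), so the $s=k+1$ boundary coefficient is absorbed by the same Lemma~\ref{Lus} identity together with the symmetry $\qbinom{2k}{k+1}_{v_i^2}=\qbinom{2k}{k-1}_{v_i^2}$ rather than by the Pascal identity $\qbinom{2k+1}{k}_{v_i^2}+\qbinom{2k+1}{k+1}_{v_i^2}=\qbinom{2k+2}{k+1}_{v_i^2}$ that was used in the even case.
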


Recall from \eqref{involution} that the involution $\sigma$ on $\mathcal{T}_\mathbf{i}(Q)$ satisfies $\sigma(X_i)=Y_i$ for any $i\in\I$. Hence $\sigma(u_i)=u_i$ and $\sigma(y_{i,m})=y_{i,m}$ for any $i\in\I, m\in\N$, and then
the $\imath$divided powers $\ffy_{i,\ev}^{(m)}$ and $\ffy_{i,\odd}^{(m)}$ are preserved by $\sigma$.
Therefore, the equation \eqref{yzgx0} can be simplified, and we have the following proposition.

\begin{proposition}\label{prop3.5}
The $\imath$Serre relations \eqref{iserrerelation} holds if and only if
 \begin{align}\label{yzgx}
\sum_{n=0}^{1-c_{ij}} (-1)^n  y_{i,\overline{p_i}}^{(n)}\cdot X_j\cdot y_{i,\overline{c_{ij}}+\overline{p}_i}^{(1-c_{ij}-n)} &=0
\end{align}
for any $i,j\in\I$ with $j\neq i$ and $\ov{p}_i\in \Z/2\Z$.
\end{proposition}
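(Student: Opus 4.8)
The plan is to reduce equation \eqref{yzgx0} to equation \eqref{yzgx} by exploiting the involution $\sigma$ of $\mathcal{T}_\mathbf{i}(Q)$ from \eqref{involution}. First I recall that, by Theorem \ref{mainthm} (whose proof is being completed), the $\imath$Serre relations \eqref{iserrerelation} hold under $\varphi_{\mathbf{i}}$ if and only if \eqref{yzgx0} holds, i.e.
\[
\sum_{n=0}^{1-c_{ij}} (-1)^n  y_{i,\overline{p_i}}^{(n)}\cdot (X_j+Y_j)\cdot y_{i,\overline{c_{ij}}+\overline{p}_i}^{(1-c_{ij}-n)} =0
\]
for all $i\neq j$ and $\ov{p}_i\in\Z/2\Z$. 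The left-hand side splits as a sum of the ``$X_j$-part'' and the ``$Y_j$-part''; the content of the proposition is that each part vanishes separately and that the two parts are in fact equivalent statements.

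The key observation is that $\sigma$ is an algebra \emph{homomorphism} (an involution), $\sigma(X_i)=Y_i$ and $\sigma(Y_i)=X_i$ for every $i\in\I$, hence $\sigma(u_i)=\sigma(X_iY_i)=Y_iX_i=u_i$ and $\sigma(y_{i,m})=\sigma(X_i^m+Y_i^m)=Y_i^m+X_i^m=y_{i,m}$ for all $i$ and all $m\in\N$. Since the $\imath$divided powers $\ffy_{i,\ev}^{(n)}$ and $\ffy_{i,\odd}^{(n)}$ are $\Q(v)$-polynomials in $y_i=y_{i,1}$ and $u_i$ (by their defining formulas, or by Propositions \ref{prop:idivided-even}--\ref{prop:idivided-odd}), they are fixed by $\sigma$; that is, $\sigma(y_{i,\ov{p}_i}^{(n)})=y_{i,\ov{p}_i}^{(n)}$ for every $n$ and every parity $\ov{p}_i$. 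Applying $\sigma$ to the ``$X_j$-part'' of the left-hand side above therefore gives exactly the ``$Y_j$-part'', and vice versa. Consequently \eqref{yzgx0} holds if and only if its $X_j$-summand vanishes, which is precisely \eqref{yzgx}; conversely \eqref{yzgx} together with its image under $\sigma$ recovers \eqref{yzgx0}. This proves the ``if and only if''.

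There is no real obstacle here: the only thing to be careful about is that $\sigma$ genuinely fixes the $\imath$divided powers, which is immediate once one notes they are built solely from $y_i$ and $u_i$ — both $\sigma$-invariant — via the defining recursions \eqref{ji2kj1} and its odd analogue (no $X_i,Y_i$ enter individually). Everything else is a one-line manipulation of the split sum. I would present the argument in two or three sentences after stating the $\sigma$-invariance of $y_i$, $u_i$, and hence of $y_{i,\ov{p}_i}^{(n)}$.
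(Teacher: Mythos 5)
Your proposal follows the paper's own route exactly: the paper also deduces from $\sigma(X_i)=Y_i$ that $\sigma(u_i)=u_i$, $\sigma(y_{i,m})=y_{i,m}$, hence that the $\imath$divided powers are $\sigma$-fixed, and then asserts that \eqref{yzgx0} ``can be simplified'' to \eqref{yzgx}. One small caution, which applies equally to the paper's terse write-up: the $\sigma$-argument cleanly gives the ``if'' direction (if the $X_j$-part $A$ vanishes, then so does $B=\sigma(A)$, hence $A+B=0$), but for the ``only if'' direction $A+B=0$ only yields $\sigma(A)=-A$, not $A=0$. To actually split the sum one should note that $A$ and $B$ are homogeneous of degrees $+1$ and $-1$ for the $\Z$-grading on $\mathcal{T}_\mathbf{i}(Q)$ assigning $\deg x_k=1$ when $i_k=j$ and $\deg x_k=0$ otherwise (well defined since the defining relations are monomial), so that $A+B=0$ forces $A=B=0$ by uniqueness of Laurent expansions. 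This ``only if'' direction is genuinely used later in the paper (Proposition~\ref{cor:identities}), so it is worth making the grading observation explicit; otherwise your argument matches the paper's.
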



\section{$\imath$Serre relations in the quantum torus}
\label{sec:iSerre}

In this section, we shall expand
\eqref{yzgx} using the formulas on $\imath$divided powers given in Propositions \ref{prop:idivided-even}--\ref{prop:idivided-odd}. After reorganizations of the computations, we reduce the proof of \eqref{yzgx} to the verification of certain quantum binomial identities.

\subsection{Even $c_{ij}$}
\label{subsec:even-even}
In this subsection, let us consider the proof of (\ref{yzgx}) for the case that $c_{ij}$ is even. Assume $c_{ij}=-2m$, $\n_{ij}=-ad_i$ and $\n_{ji}=-(a+2b)d_i$ for some non-negative integers $a,m$ and $-m\leq b\leq m$. Then by $\n_{ij}+\n_{ji}=d_ic_{ij}$, we have $m=a+b$. In particular, if $b=\pm m$, we can see that either $\n_{ij}=0$ or $\n_{ji}=0$.

\subsubsection{Even $c_{ij}$ and $\ov{p}_i=0$}

Let us rewrite  \eqref{yzgx} in this case:
    \begin{align}\label{Serre relation for Xj}
			\sum_{n=0}^{2m+1} (-1)^n  y_{i,\ov{0}}^{(n)}\cdot X_j\cdot y_{i,\ov{0}}^{(2m+1-n)} =0.
	\end{align}

    Note that \eqref{Serre relation for Xj}  is equivalent to
     \begin{align*}
			\sum_{k=0}^{m} y_{i,\ov{0}}^{(2k)}\cdot X_j \cdot y_{i,\ov{0}}^{(2m+1-2k)} =\sum_{k=0}^{m} y_{i,\ov{0}}^{(2k+1)}\cdot X_j \cdot y_{i,\ov{0}}^{(2m-2k)}.
	\end{align*}
    That is,
     \begin{align*}
			&\sum_{k=0}^{m}
            \frac{1}{[2k]^!_{v_i}}\sum\limits_{s=0}^k(\qbinom{2k-1}{s}_{v_i^2}+\qbinom{2k-1}{s-1}_{v_i^2})y_{i,2k-2s}u_i^s
            \cdot X_j \\ &\qquad\qquad\cdot
            \frac{1}{[2m-2k+1]^!_{v_i}}\sum\limits_{t=0}^{m-k}\qbinom{2m-2k+1}{t}_{v_i^2}y_{i,2m-2k+1-2t}u_i^t\\
            &=\sum_{k=0}^{m}  \frac{1}{[2k+1]^!_{v_i}}
    \sum\limits_{t=0}^k\qbinom{2k+1}{t}_{v_i^2}y_{i,2k+1-2t} u_i^t
    \cdot X_j \\ &\qquad\qquad \cdot
     \frac{1}{[2m-2k]^!_{v_i}}\sum\limits_{s=0}^{m-k}(\qbinom{2m-2k-1}{s}_{v_i^2}+\qbinom{2m-2k-1}{s-1}_{v_i^2})y_{i,2m-2k-2s}u_i^s.
	\end{align*}
Via replacing $k$ by $m-k$ on the right-hand side, it suffices to show
\begin{align}\label{first left and right}
			&\sum_{k=0}^{m} \sum\limits_{s=0}^k\sum\limits_{t=0}^{m-k}
            \phi(k,s,t) y_{i,2k-2s}
            \cdot X_j\cdot y_{i,2m-2k+1-2t} u_i^{s+t}\\\notag
            &=\sum_{k=0}^{m}
    \sum\limits_{s=0}^k \sum\limits_{t=0}^{m-k}
    \phi(k,s,t)
    y_{i,2m-2k+1-2t}
    \cdot X_j\cdot  y_{i,2k-2s} u_i^{s+t},
	\end{align}
where
    $$\phi(k,s,t):=\frac{1}{[2k]^!_{v_i}}\frac{1}{[2m-2k+1]^!_{v_i}}(\qbinom{2k-1}{s}_{v_i^2}+\qbinom{2k-1}{s-1}_{v_i^2})
            \qbinom{2m-2k+1}{t}_{v_i^2}.$$
 
    By Lemma \ref{yxy} and $\n_{ij}-\n_{ji}=2bd_i$, we have
      \begin{align*}
 y_{i,r}\cdot X_j\cdot y_{i,t}&=(v_i^{2bt}X_i^{r+t}+v_i^{-2bt}Y_i^{r+t}+\delta'_{rt,0}(v_i^{2bt}Y_i^rX_i^{t}+v_i^{-2bt}X_i^{r}Y_i^t))X_j.
 \end{align*}
Note that $u_i=X_iY_i$ and $u_iX_j=X_ju_i$. It suffices to show that the coefficients of $X_i^{2\ell+1}\cdot X_j\cdot u_i^{m-\ell}=X_i^{m+\ell+1}\cdot Y_i^{m-\ell}\cdot X_j$ on both sides of \eqref{first left and right} are identical for any $-m\leq \ell\leq m$. That is,
    \begin{align*}
      &\sum_{k=0}^{m} \sum\limits_{s=0}^k\sum\limits_{\begin{subarray}{c}
           t=0  \\
           t+s=m-\ell
      \end{subarray}}^{m-k}
           v_i^{2b(2m-2k+1-2t)} \phi(k,s,t)+\sum_{k=0}^{m} \sum\limits_{s=0}^k\sum\limits_{\begin{subarray}{c}
           t=0  \\
           s+t=m+\ell+1
      \end{subarray}}^{m-k}
            v_i^{-2b(2m-2k+1-2t)}\phi(k,s,t)+\\
            &\sum_{k=0}^{m} \sum\limits_{s=0}^{{{k-1}}}\sum\limits_{
            \begin{subarray}{c}
           t=0  \\
           s-t=\ell+2k-m
      \end{subarray}}^{m-k}
            v_i^{2b(2m-2k+1-2t)}\phi(k,s,t)+\sum_{k=0}^{m} \sum\limits_{s=0}^{{k-1}}\sum\limits_{
            \begin{subarray}{c}
           t=0  \\
           t-s=m+\ell+1-2k
      \end{subarray}}^{m-k}
           v_i^{-2b(2m-2k+1-2t)}\phi(k,s,t)\\
            =
            &\sum_{k=0}^{m} \sum\limits_{s=0}^k\sum\limits_{
            \begin{subarray}{c}
           t=0  \\
           t+s=m-\ell
      \end{subarray} }^{m-k}
            v_i^{2b(2k-2s)}\phi(k,s,t)+\sum_{k=0}^{m} \sum\limits_{s=0}^k\sum
            \limits_{
            \begin{subarray}{c}
           t=0  \\
           s+t=\ell+m+1
      \end{subarray}}^{m-k}
             v_i^{-2b(2k-2s)}\phi(k,s,t)+\\
            &\sum_{k=0}^{m} \sum\limits_{s=0}^{k-1}\sum\limits_{
            \begin{subarray}{c}
           t=0  \\
           s-t=\ell-m+2k
      \end{subarray} }^{{m-k}}
             v_i^{-2b(2k-2s)}\phi(k,s,t)+\sum_{k=0}^{m} \sum\limits_{s=0}^{k-1}\sum\limits_{
             \begin{subarray}{c}
           t=0  \\
           t-s=m+\ell-2k+1
      \end{subarray} }^{{m-k}}
            v_i^{2b(2k-2s)}\phi(k,s,t).
    \end{align*}
By symmetry, we only need to consider $0\leq \ell\leq m$. In this case, the second terms on both sides equal to zero. Thus the above equation can be reformulated as
\begin{align*}
     &\sum_{k=0}^{m} \sum\limits_{s=k-\ell}^{k}
            v_i^{2b(2\ell-2k+2s+1)}   \phi(k,s,m-\ell-s)
            +\sum_{k=0}^{m} \sum\limits_{s=0}^{{{k-1}}}
            v_i^{2b(2k+2\ell-2s+1)}\phi(k,s,m+1+\ell-s)\\
           &+\sum_{k=\ell+1}^{m} \sum\limits_{s=0}^{k-\ell-1}
             v_i^{-2b(2k-2\ell-2s-1)} \phi(k,s,m-\ell-s)\\
      =   &\sum_{k=0   }^{m} \sum\limits_{s=k-\ell}^k
            v_i^{2b(2k-2s)}\phi(k,s,m-\ell-s)
            +\sum_{k=0}^{m} \sum\limits_{s=0}^{k-1}
            v_i^{-2b(2k-2s)}\phi(k,s,m+1+\ell-s)\\
            &+\sum_{k=\ell+1}^{m} \sum\limits_{s=0}^{k-\ell-1}
            v_i^{2b(2k-2s)}\phi(k,s,m-\ell-s).
    \end{align*}
    That is,

\begin{align*}
     &\sum_{k=0}^{m} \sum\limits_{s=0}^{k}
            v_i^{2b(2\ell-2k+2s+1)}   \phi(k,s,m-\ell-s)
           +\sum_{k=0}^{m} \sum\limits_{s=0}^{{{k-1}}}
            v_i^{2b(2k+2\ell-2s+1)}\phi(k,s,m+1+\ell-s)
              \\
      &=   \sum_{k=0   }^{m} \sum\limits_{s=0}^k
            v_i^{2b(2k-2s)}\phi(k,s,m-\ell-s)
            +\sum_{k=0}^{m} \sum\limits_{s=0}^{k-1}
            v_i^{-2b(2k-2s)}\phi(k,s,m+1+\ell-s).
    \end{align*}
    Equivalently,
\begin{align*}
     &\sum_{k=0}^{m} \sum\limits_{s=0}^{k}
            (v_i^{2b(2\ell+1-2k+2s)}-v_i^{2b(2k-2s)})  \phi(k,s,m-\ell-s)
                         \\
      =   &\sum_{k=0}^{m} \sum\limits_{s=0}^{k-1}
            (v_i^{-2b(2k-2s)}-v_i^{2b(2\ell+1+2k-2s)})\phi(k,s,m+1+\ell-s).
    \end{align*}
Via replacing $s$ by ${2}k-s$ on the right-hand side, we obtain
    \begin{align*}
     &\sum_{k=0}^{m} \sum\limits_{s=0}^{k}
            (v_i^{2b(2\ell+1-2k+2s)}-v_i^{2b(2k-2s)})\phi(k,s,m-\ell-s)\\
      &=\sum_{k=0}^{m} \sum\limits_{s=k+1}^{2k}
             (v_i^{2b(2k-2s)}-v_i^{2b(2\ell+1-2k+2s)})\phi(k,s,m-\ell-s).
    \end{align*}
That is,
\begin{align} \label{eqn:binorm-form}
     \sum_{k=0}^{m} \sum\limits_{s=0}^{2k}
          (v_i^{2b(2\ell+1-2k+2s)}-v_i^{2b(2k-2s)})\phi(k,s,m-\ell-s)=0.
    \end{align}
Note that
$$v_i^{2b(2\ell+1-2k+2s)}-v_i^{2b(2k-2s)}=v_i^{b(2\ell+1)}(v_i-v_i^{-1}) [b(2\ell+1-4k+4s)]_{v_i}.$$
Therefore, we have established the following.
\begin{proposition}
\label{prop:reduce-even-even}
    Assume $c_{ij}=-2m$ is even and $\ov{p}_i=0$.
    Then \eqref{yzgx} holds if and only if for any $0\leq |b|,\ell\leq m$,
\begin{align}\label{ou0}
     &\sum_{k=0}^{m} \sum\limits_{s=0}^{2k}
           [b(2\ell+1-4k+4s)]_{v_i}\qbinom{2m+1}{2k}_{v_i}\Big(\qbinom{2k-1}{s}_{v_i^2}+\qbinom{2k-1}{s-1}_{v_i^2}\Big)\qbinom{2m-2k+1}{m-\ell-s}_{v_i^2}=0.
    \end{align}
\end{proposition}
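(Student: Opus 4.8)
The plan is to establish, as a chain of \emph{equivalences}, the reduction of the algebraic identity \eqref{yzgx} (for $c_{ij}=-2m$ even and $\ov{p}_i=0$) to the combinatorial identity \eqref{ou0}. First I would specialize: since $c_{ij}$ is even we have $\ov{c_{ij}}+\ov{p}_i=\ov{0}$, so both $\imath$divided powers occurring are of $\ev$-type, and \eqref{yzgx} reads $\sum_{n=0}^{2m+1}(-1)^n y_{i,\ov{0}}^{(n)}\cdot X_j\cdot y_{i,\ov{0}}^{(2m+1-n)}=0$, i.e.\ \eqref{Serre relation for Xj}. Separating the sum according to the parity of $n$ and moving the odd-$n$ part to the other side, then inserting the closed formulas of Propositions \ref{prop:idivided-even}--\ref{prop:idivided-odd} for $y_{i,\ov{0}}^{(2k)}$ and $y_{i,\ov{0}}^{(2k+1)}$ and reindexing $k\mapsto m-k$ on the right-hand side so that the scalar prefactors agree, I arrive at the symmetric form \eqref{first left and right} with coefficients $\phi(k,s,t)$.

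Next I would expand every product $y_{i,R}\cdot X_j\cdot y_{i,T}$ on both sides of \eqref{first left and right} by Lemma \ref{yxy}, using $\n_{ij}-\n_{ji}=2bd_i$; this turns each side into a $\Q(v)$-linear combination of terms $X_i^aY_i^bX_j$ once the central element $u_i=X_iY_i=Y_iX_i$ is pulled to the right. Because the Laurent monomials form a $\Q(v)$-basis of $\ct_\mathbf{i}(Q)$, and $X_i$, $X_j$ involve disjoint (and internally commuting) families of generators, the elements $X_i^{m+\ell+1}Y_i^{m-\ell}X_j$ (that is, $X_i^{2\ell+1}\cdot X_j\cdot u_i^{m-\ell}$) for $-m\le\ell\le m$ are linearly independent — they are distinguished by their total degree in the variables labelled by $i$ — so \eqref{first left and right} holds if and only if the coefficient of each such element matches on the two sides. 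Since the involution $\sigma$ of \eqref{involution} fixes $u_i$ and every $y_{i,n}$, hence every $\imath$divided power, a symmetry argument reduces the task to matching the coefficient of $X_i^{2\ell+1}\cdot X_j\cdot u_i^{m-\ell}$ for $0\le\ell\le m$; for such $\ell$ the contributions that would force $s+t\ge m+1$ are empty (since $s\le k$ and $t\le m-k$ give $s+t\le m$) and drop out automatically.

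The heart of the argument — and the step I expect to be the main obstacle, since it is where sign and index errors are easy to make — is the precise bookkeeping of which triples $(k,s,t)$ feed into the coefficient of $X_i^{2\ell+1}\cdot X_j\cdot u_i^{m-\ell}$ on each side. Each application of Lemma \ref{yxy} produces four terms (the two ``pure'' ones $X_i^{R+T}X_j$, $Y_i^{R+T}X_j$ and the two ``mixed'' $\delta'_{RT,0}$-terms $Y_i^RX_i^TX_j$, $X_i^RY_i^TX_j$); after rewriting in normal form each contributes to our coefficient subject to a different linear constraint on $(k,s,t)$, with a $v_i$-power weight governed by $b$. Collecting these, cancelling the contributions common to the two sides, and finally reindexing $s\mapsto 2k-s$, the coefficient identity collapses to \eqref{eqn:binorm-form},
\[
\sum_{k=0}^{m}\sum_{s=0}^{2k}\big(v_i^{2b(2\ell+1-2k+2s)}-v_i^{2b(2k-2s)}\big)\,\phi(k,s,m-\ell-s)=0.
\]

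Finally I would apply the elementary factorization $v_i^{2b(2\ell+1-2k+2s)}-v_i^{2b(2k-2s)}=v_i^{b(2\ell+1)}(v_i-v_i^{-1})\,[b(2\ell+1-4k+4s)]_{v_i}$, multiply through by $[2m+1]^!_{v_i}\big/\big(v_i^{b(2\ell+1)}(v_i-v_i^{-1})\big)$, and expand $\phi(k,s,m-\ell-s)$ using $[2m+1]^!_{v_i}\big/\big([2k]^!_{v_i}[2m-2k+1]^!_{v_i}\big)=\qbinom{2m+1}{2k}_{v_i}$; this yields exactly \eqref{ou0}. As every step above is reversible, both implications of the ``if and only if'' follow at once.
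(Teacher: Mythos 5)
Your proposal is correct and follows essentially the same route as the paper: split \eqref{Serre relation for Xj} by parity of $n$, insert the closed expansion formulas for the $\imath$divided powers, reindex $k\mapsto m-k$ to reach \eqref{first left and right}, expand each $y_{i,\cdot}\cdot X_j\cdot y_{i,\cdot}$ via Lemma \ref{yxy}, compare coefficients of $X_i^{m+\ell+1}Y_i^{m-\ell}X_j$, discard the vacuous $s+t\ge m+1$ contributions for $0\le\ell\le m$, reindex $s\mapsto 2k-s$, and factor to obtain \eqref{ou0}. Your explicit justification of the coefficient-comparison step via the $i$-degree grading is a small clarification of a point the paper leaves implicit, but it does not change the method.
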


\subsubsection{Even $c_{ij}$ and $\ov{p}_i=1$}
In this case, we need to prove
    \begin{align*}
			\sum_{n=0}^{2m+1} (-1)^n  y_{i,\ov{1}}^{(n)}\cdot X_j\cdot y_{i,\ov{1}}^{(2m+1-n)} =0,
	\end{align*}
    which is equivalent to
     \begin{align*}
			\sum_{k=0}^{m} y_{i,\ov{1}}^{(2k)}\cdot X_j \cdot y_{i,\ov{1}}^{(2m+1-2k)} =\sum_{k=0}^{m} y_{i,\ov{1}}^{(2k+1)}\cdot X_j \cdot y_{i,\ov{1}}^{(2m-2k)}.
	\end{align*}
    That is,
     \begin{align*}
			&\sum_{k=0}^{m}
            \frac{1}{[2k]^!_{v_i}}\sum\limits_{t=0}^k\qbinom{2k}{t}_{v_i^2}y_{i,2k-2t}u_i^t\cdot X_j \\
            &\quad\quad\cdot\frac{1}{[2m-2k+1]^!_{v_i}}\sum\limits_{s=0}^{m-k}(\qbinom{2m-2k}{s}_{v_i^2}+\qbinom{2m-2k}{s-1}_{v_i^2})y_{i,2m-2k+1-2s}u_i^s\\
            =&\sum_{k=0}^{m}  \frac{1}{[2k+1]^!_{v_i}}\sum\limits_{s=0}^k(\qbinom{2k}{s}_{v_i^2}+\qbinom{2k}{s-1}_{v_i^2})y_{i,2k+1-2s}u_i^s\cdot X_j \\
            & \quad\quad\cdot\frac{1}{[2m-2k]^!_{v_i}}\sum\limits_{t=0}^{m-k}\qbinom{2m-2k}{t}_{v_i^2}y_{i,2m-2k-2t}u_i^t.
	\end{align*}
Via replacing $k$ by $m-k$ on the left-hand side, it suffices to show
\begin{align}\label{eq:even and one}
			\sum_{k=0}^{m} \sum\limits_{s=0}^k\sum\limits_{t=0}^{m-k}
            \eta(k,s,t)  y_{i,2m-2k-2t}
            \cdot X_j\cdot y_{i,2k-2s+1} u_i^{s+t}\\\notag
            =\sum_{k=0}^{m}
    \sum\limits_{s=0}^k \sum\limits_{t=0}^{m-k}
    \eta(k,s,t)
    y_{i,2k-2s+1}
    \cdot X_j\cdot  y_{i,2m-2k-2t} u_i^{s+t},
	\end{align}
where
    $$\eta(k,s,t):=\frac{1}{[2k+1]^!_{v_i}}\frac{1}{[2m-2k]^!_{v_i}}\big(\qbinom{2k}{s}_{v_i^2}+\qbinom{2k}{s-1}_{v_i^2}\big)
            \qbinom{2m-2k}{t}_{v_i^2}.$$
   By Lemma \ref{yxy},
it suffices to show that the  coefficients of
$X_i^{2\ell+1}\cdot X_j\cdot u_i^{m-\ell}=X_i^{m+\ell+1}\cdot Y_i^{m-\ell}\cdot X_j$
on both sides of \eqref{eq:even and one} coincide for any $-m\leq \ell\leq m$. That is,
      \begin{align*}
      &\sum_{k=0}^{m} \sum\limits_{s=0}^k\sum\limits_{s+t=m-\ell,t=0}^{m-k}v_i^{2b(2k-2s+1)}\eta(k,s,t)
      +\sum_{k=0}^{m} \sum\limits_{s=0}^k\sum\limits_{t+s=m+\ell+1,t=0}^{m-k}v_i^{-2b(2k-2s+1)}\eta(k,s,t)\\
      +&\sum_{k=0}^{m} \sum\limits_{s=0}^{{{k}}}\sum\limits_{t-s=\ell+m-2k,t=0}^{m-k-1}v_i^{2b(2k-2s+1)}\eta(k,s,t)
      +\sum_{k=0}^{m} \sum\limits_{s=0}^{{k}}\sum\limits_{s-t=\ell-m+2k+1,t=0}^{m-k-1}v_i^{-2b(2k-2s+1)}\eta(k,s,t)\\
            =
      &\sum_{k=0}^{m} \sum\limits_{s=0}^k\sum\limits_{t+s=m-\ell,t=0}^{m-k}v_i^{2b(2m-2k-2t)}\eta(k,s,t)
      +\sum_{k=0}^{m} \sum\limits_{s=0}^k\sum\limits_{s+t=\ell+m+1,t=0}^{m-k}v_i^{-2b(2m-2k-2t)}\eta(k,s,t)\\
      +&\sum_{k=0}^{m} \sum\limits_{s=0}^k\sum\limits_{s-t=\ell-m+2k+1,t=0}^{{m-k-1}}v_i^{2b(2m-2k-2t)}\eta(k,s,t)
      +\sum_{k=0}^{m} \sum\limits_{s=0}^k\sum\limits_{t-s=\ell+m-2k,t=0}^{{m-k-1}}v_i^{2b(2k+2t-2m)}\eta(k,s,t).
    \end{align*}
By symmetry, we only need to consider $0\leq \ell\leq m$. In this case, the second terms on both sides equal to zero. Thus the above equation can be reformulated as
\begin{align*}&\sum_{k=0}^{m} \sum\limits_{s=k-\ell}^k
            v_i^{2b(2k-2s+1)}\eta(k,s,m-\ell-s)+\sum_{k=0}^{m} \sum\limits_{s=2k-m-\ell}^{{{k-\ell-1}}}
            v_i^{2b(2k-2s+1)}\eta(k,s,m-\ell-s)\\
              +&\sum_{k=0}^{m} \sum\limits_{s=\ell-m+2k+1}^{{k}}
            v_i^{-2b(2k-2s+1)}\eta(k,s,m+\ell+1-s)\\
            =
            &\sum_{k=0}^{m} \sum\limits_{s=k-\ell}^k
           v_i^{2b(2\ell-2k+2s)}\eta(k,s,m-\ell-s)
            +\sum_{k=0}^{m} \sum\limits_{s=\ell-m+2k+1}^k
            v_i^{2b(2k-2s+2\ell+2)}\eta(k,s,m+\ell+1-s)\\
            +&\sum_{k=0}^{m} \sum\limits_{s=2k-m-\ell}^{k-\ell-1}
            v_i^{-2b(2k-2s-2\ell)}\eta(k,s,m-\ell-s).
    \end{align*}
   Equivalently, we need to prove
    \begin{align*}&\sum_{k=0}^{m} \sum\limits_{s=2k-m-\ell}^{k}
            v_i^{2b(2k-2s+1)}\eta(k,s,m-\ell-s)
              +\sum_{k=0}^{m} \sum\limits_{s=\ell-m+2k+1}^{{k}}
            v_i^{2b(2s-2k-1)}\eta(k,s,m+\ell+1-s)\\
            &=
            \sum_{k=0}^{m} \sum\limits_{s=2k-m-\ell}^{k}
            v_i^{2b(2\ell-2k+2s)}\eta(k,s,m-\ell-s)
            \\&\quad+\sum_{k=0}^{m} \sum\limits_{s=\ell-m+2k+1}^k
            v_i^{2b(2k-2s+2\ell+2)}\eta(k,s,m+\ell+1-s),
    \end{align*}
which is reorganized to be
\begin{align*}&\sum_{k=0}^{m} \sum\limits_{s=0}^k
            \big(v_i^{2b(2k-2s+1)}-v_i^{2b(2\ell-2k+2s)}\big)\eta(k,s,m-\ell-s)\\
            &=\sum_{k=0}^{m} \sum\limits_{s=0}^k
            \big(v_i^{2b(2k-2s+2\ell+2)}-v_i^{-2b(2k-2s+1)}\big)\eta(k,s,m+\ell+1-s).
    \end{align*}
Replacing $s$ by ${2}k-s+1$ on the right-hand side, we obtain
         \begin{align*}
    \sum_{k=0}^{m} \sum\limits_{s=0}^{2k+1}
            (v_i^{2b(2k-2s+1)}-v_i^{2b(2\ell-2k+2s)})\eta(k,s,m-\ell-s)=0.
    \end{align*}
Note that
$$v_i^{2b(2\ell-2k+2s)}-v_i^{2b(2k-2s+1)}=v_i^{b(2\ell+1)}(v_i-v_i^{-1}) [b(2\ell-1-4k+4s)]_{v_i}.$$
Therefore, we have established the following.
\begin{proposition}\label{prop:even and p=1}
    Assume $c_{ij}=-2m$ is even and $\ov{p}_i=1$.
    Then \eqref{yzgx} holds if and only if for any $0\leq |b|,\ell\leq m$,
\begin{align}\label{evenp=1}
       \begin{split}
     \sum_{k=0}^{m} \sum\limits_{s=0}^{2k+1}
            [b(2\ell-1-4k+4s)]_{v_i}\qbinom{2m+1}{2k+1}_{v_i}\Big(\qbinom{2k}{s}_{v_i^2}+\qbinom{2k}{s-1}_{v_i^2}\Big)\qbinom{2m-2k}{m-\ell-s}_{v_i^2}=0.
             \end{split}
    \end{align}
\end{proposition}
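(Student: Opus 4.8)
The plan is to reduce the $\imath$Serre relation \eqref{yzgx} in the case $c_{ij}=-2m$, $\ov{p}_i=1$ to the scalar identity \eqref{evenp=1} by running the same mechanism already used for $\ov{p}_i=0$ in Proposition \ref{prop:reduce-even-even}, but now feeding in the \emph{odd} $\imath$divided power formulas of Proposition \ref{prop:idivided-odd}. First I would record the parities: since $c_{ij}$ is even, $\ov{c_{ij}}=\ov{0}$, so $\ov{c_{ij}}+\ov{p}_i=\ov{1}$ and $1-c_{ij}=2m+1$; hence \eqref{yzgx} reads $\sum_{n=0}^{2m+1}(-1)^n\,y_{i,\ov{1}}^{(n)}\cdot X_j\cdot y_{i,\ov{1}}^{(2m+1-n)}=0$. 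Separating the even terms $n=2k$ from the odd terms $n=2k+1$ and moving one group to the other side, this is equivalent to $\sum_{k=0}^m y_{i,\ov{1}}^{(2k)}\cdot X_j\cdot y_{i,\ov{1}}^{(2m+1-2k)}=\sum_{k=0}^m y_{i,\ov{1}}^{(2k+1)}\cdot X_j\cdot y_{i,\ov{1}}^{(2m-2k)}$.

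Next I would substitute the formulas of Proposition \ref{prop:idivided-odd} for $y_{i,\ov{1}}^{(2k)}$ and $y_{i,\ov{1}}^{(2k+1)}$, so that each side becomes a linear combination of terms of the shape $y_{i,r}\cdot X_j\cdot y_{i,t}\,u_i^{\bullet}$; a reindexing $k\mapsto m-k$ on one side puts the identity into the balanced form \eqref{eq:even and one}, governed by the single coefficient function $\eta(k,s,t)$. Then I would apply Lemma \ref{yxy} to expand every $y_{i,r}\cdot X_j\cdot y_{i,t}$ as a combination of the monomials $X_i^{a}Y_i^{b}X_j$, using that $u_i=X_iY_i$ is central and commutes with $X_j$, and extract --- exactly as in Proposition \ref{prop:reduce-even-even} --- the coefficient of $X_i^{m+\ell+1}Y_i^{m-\ell}X_j=X_i^{2\ell+1}\cdot X_j\cdot u_i^{m-\ell}$ for each $-m\le\ell\le m$. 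This turns the relation into one scalar identity per $\ell$.

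At this stage I would invoke the involution $\sigma$ of \eqref{involution}, which fixes $u_i$ and each $y_{i,n}$, hence fixes every $\imath$divided power while swapping $X_i\leftrightarrow Y_i$; it interchanges the $\ell$-identity with the $(-\ell)$-identity, so it suffices to treat $0\le\ell\le m$. In that range the contributions producing $Y_i$-dominant monomials have empty summation range and vanish, and a last reindexing $s\mapsto 2k-s+1$ on the right-hand side fuses the two sides into $\sum_{k=0}^m\sum_{s=0}^{2k+1}\big(v_i^{2b(2k-2s+1)}-v_i^{2b(2\ell-2k+2s)}\big)\eta(k,s,m-\ell-s)=0$. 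Finally, the elementary factorization $v_i^{2b(2\ell-2k+2s)}-v_i^{2b(2k-2s+1)}=v_i^{b(2\ell+1)}(v_i-v_i^{-1})[b(2\ell-1-4k+4s)]_{v_i}$, together with clearing the factorial denominators in $\eta$ (which reassemble into the factor $\qbinom{2m+1}{2k+1}_{v_i}$), converts this into precisely \eqref{evenp=1}.

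The delicate but shallow part is keeping track of the summation limits through the two reindexings $k\mapsto m-k$ and $s\mapsto 2k-s+1$, and verifying that for $0\le\ell\le m$ exactly the claimed terms survive; the computation is a verbatim parallel of the $\ov{p}_i=0$ case, with $\qbinom{2k}{s}_{v_i^2}+\qbinom{2k}{s-1}_{v_i^2}$ and $\qbinom{2k}{t}_{v_i^2}$ now in the roles played there by $\qbinom{2k-1}{s}_{v_i^2}+\qbinom{2k-1}{s-1}_{v_i^2}$ and $\qbinom{2k+1}{t}_{v_i^2}$. I would stress that this proposition only establishes the equivalence $\eqref{yzgx}\Leftrightarrow\eqref{evenp=1}$; the genuine difficulty --- proving \eqref{evenp=1} itself --- is handled later, first for $b=m$ (Proposition \ref{teshuqx}) and then for general $b$ via the non-standard Serre--Lusztig relations of Proposition \ref{prop:SLinduct}.
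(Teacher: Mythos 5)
Your proposal follows essentially the same route as the paper: split the sum into $n=2k$ and $n=2k+1$ parts, substitute Proposition \ref{prop:idivided-odd}, reindex $k\mapsto m-k$ to balance into \eqref{eq:even and one} with the single coefficient $\eta(k,s,t)$, expand via Lemma \ref{yxy}, extract the coefficient of $X_i^{m+\ell+1}Y_i^{m-\ell}X_j$, use the $\sigma$-symmetry to restrict to $0\le\ell\le m$, reindex $s\mapsto 2k-s+1$ on one side, and factor the difference of $v_i$-powers into $[b(2\ell-1-4k+4s)]_{v_i}$. The explicit appeal to the involution $\sigma$ to justify the paper's terse "by symmetry" is a welcome clarification rather than a different method. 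One small slip in your summary: the factor in $\eta$ (respectively $\phi$) whose analogy you record is $\qbinom{2m-2k}{t}_{v_i^2}$ (respectively $\qbinom{2m-2k+1}{t}_{v_i^2}$), not $\qbinom{2k}{t}_{v_i^2}$ (respectively $\qbinom{2k+1}{t}_{v_i^2}$); this does not affect the argument, as the rest of the computation handles the genuine coefficients correctly.
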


\subsection{Odd $c_{ij}$}
\label{subsec:odd-even}

In this subsection, let us consider the proof of (\ref{yzgx}) for the case that $c_{ij}$ is odd. Assume $c_{ij}=-(2m+1)$, $\n_{ij}=-ad_i$ and $\n_{ji}=-(a+2b+1)d_i$ for some non-negative integers $a,m$, and $-(m+1)\leq b\leq m$. Then $m=a+b$.

\subsubsection{Odd $c_{ij}$ and $\ov{p}_i=0$}
\label{subsubsec:odd-even}

In this case, we need to prove
\begin{align*}
\sum_{n=0}^{2m+2} (-1)^n  y_{i,\ov{0}}^{(n)}\cdot X_j\cdot y_{i,\ov{1}}^{(2m+2-n)} =0,
\end{align*}
which is equivalent to
\begin{align*}
\sum_{k=0}^{m+1} y_{i,\ov{0}}^{(2k)}\cdot X_j\cdot y_{i,\ov{1}}^{(2m-2k+2)} =\sum_{k=0}^{m} y_{i,\ov{0}}^{(2k+1)}\cdot X_j \cdot y_{i,\ov{1}}^{(2m-2k+1)}.
\end{align*}
That is,
\begin{align*}
&\sum_{k=0}^{m+1}\frac{1}{[2k]^!_{v_i}}\sum\limits_{s=0}^k\big(\qbinom{2k-1}{s}_{v_i^2}+\qbinom{2k-1}{s-1}_{v_i^2}\big)y_{i,2k-2s}u_i^s
\cdot X_j \\
&\quad\quad\cdot\frac{1}{[2m-2k+2]^!_{v_i}}\sum\limits_{t=0}^{m-k+1}\qbinom{2m-2k+2}{t}_{v_i^2}y_{i,2m-2k+2-2t}u_i^t\\
&=\sum_{k=0}^{m}  \frac{1}{[2k+1]^!_{v_i}}\sum\limits_{t=0}^k\qbinom{2k+1}{t}_{v_i^2}y_{i,2k+1-2t}u_i^t
\cdot X_j \\
&\quad\quad\cdot\frac{1}{[2m-2k+1]^!_{v_i}}\sum\limits_{s=0}^{m-k}\big(\qbinom{2m-2k}{s}_{v_i^2}+\qbinom{2m-2k}{s-1}_{v_i^2}\big)y_{i,2m-2k+1-2s}u_i^s.
\end{align*}
Via replacing $k$ by $m-k$ on the right-hand side, it suffices to show
 \begin{align}\label{four left and right}
&\sum_{k=0}^{m+1} \sum\limits_{s=0}^k\sum\limits_{t=0}^{m-k+1}\xi(k,s,t) y_{i,2k-2s}\cdot X_j\cdot y_{i,2m-2k+2-2t} u_i^{s+t}\\\notag
&=\sum_{k=0}^{m}\sum\limits_{s=0}^k \sum\limits_{t=0}^{m-k}\psi(k,s,t) y_{i,2m-2k+1-2t}\cdot X_j \cdot  y_{i,2k+1-2s} u_i^{s+t},
\end{align}
where
\begin{align*}&\xi(k,s,t):=\frac{1}{[2k]^!_{v_i}}\frac{1}{[2m-2k+2]^!_{v_i}}\big(\qbinom{2k-1}{s}_{v_i^2}+\qbinom{2k-1}{s-1}_{v_i^2}\big)
            \qbinom{2m-2k+2}{t}_{v_i^2}\\
            &\psi(k,s,t):=\frac{1}{[2k+1]^!_{v_i}}  \frac{1}{[2m-2k+1]^!_{v_i}}  \big(\qbinom{2k}{s}_{v_i^2}+\qbinom{2k}{s-1}_{v_i^2}\big)\qbinom{2m-2k+1}{t}_{v_i^2}.\end{align*}
By Lemma \ref{yxy},
it suffices to show that the coefficients of $X_i^{2\ell+2}\cdot X_j\cdot u_i^{m-\ell}=X_i^{m+\ell+2}\cdot Y_i^{m-\ell}\cdot X_j$ on both sides of \eqref{four left and right} are identical for any $-m\leq \ell\leq m$. That is,
\begin{align*}
&\sum_{k=0}^{m+1} \sum\limits_{s=0}^k\sum\limits_{\begin{subarray}{c}t=0\\t+s=m-\ell\end{subarray}}^{m-k+1}v_i^{(2b+1)(2m-2k-2t+2)}\xi(k,s,t)
+\sum_{k=0}^{m+1} \sum\limits_{s=0}^k\sum\limits_{\begin{subarray}{c}t=0\\ s+t=m+\ell+2\end{subarray}}^{m-k+1}v_i^{-(2b+1)(2m-2k-2t+2)}
\xi(k,s,t)
\\
&+\sum_{k=0}^{m+1} \sum\limits_{s=0}^{{{k-1}}}\sum\limits_{\begin{subarray}{c}t=0\\ s-t=\ell+2k-m\end{subarray}}^{{m-k}}
v_i^{(2b+1)(2m-2k-2t+2)}\xi(k,s,t)
\\
&
+\sum_{k=0}^{m+1} \sum\limits_{s=0}^{{k-1}}\sum\limits_{\begin{subarray}{c}t=0\\t-s=m+\ell-2k+2\end{subarray}}^{{m-k}}
v_i^{-(2b+1)(2m-2k-2t+2)}\xi(k,s,t)\\
&=\sum_{k=0}^{m} \sum\limits_{s=0}^k\sum\limits_{\begin{subarray}{c}t=0\\t-s=\ell+m-2k+1\end{subarray}}^{{m-k}}v_i^{(2b+1)(2k-2s+1)}\psi(k,s,t)
+\sum_{k=0}^{m} \sum\limits_{s=0}^k\sum\limits_{\begin{subarray}{c}t=0\\s+t=\ell+m+2\end{subarray}}^{m-k}v_i^{-(2b+1)(2k-2s+1)}\psi(k,s,t)\\
&+
\sum_{k=0}^{m} \sum\limits_{s=0}^k\sum\limits_{\begin{subarray}{c}t=0\\ t+s=m-\ell\end{subarray}}^{m-k}v_i^{(2b+1)(2k-2s+1)}\psi(k,s,t)+\sum_{k=0}^{m} \sum\limits_{s=0}^k\sum\limits_{\begin{subarray}{c}t=0\\s-t=\ell-m+2k+1\end{subarray}}^{{m-k}}v_i^{(2b+1)(2s-2k-1)}\psi(k,s,t).
\end{align*}
By symmetry, we only need to consider $0\leq \ell\leq m$. In this case, the second terms on both sides equal to zero. Thus the above equation can be reformulated as
\begin{align*}
&\sum_{k=0}^{m+1} \sum\limits_{s=k-\ell-1}^k
v_i^{(2b+1)(2\ell-2k+2s+2)}\xi(k,s,m-\ell-s)+\sum_{k=0}^{m+1} \sum\limits_{s=\ell+2k-m}^{{{k-1}}}
v_i^{(2b+1)(2\ell+2k-2s+2)}
\\
&\qquad\cdot \xi(k,s,m+\ell+2-s)
+\sum_{k=0}^{m+1} \sum\limits_{s=2k-m-\ell-2}^{{k-\ell-2}}
v_i^{-(2b+1)(2k-2\ell-2s-2)}\xi(k,s,m-\ell-s)\\
=
&\sum_{k=0}^{m} \sum\limits_{s=k-\ell}^kv_i^{(2b+1)(2k-2s+1)}\psi(k,s,m-\ell-s)+\sum_{k=0}^{m} \sum\limits_{s=2k-\ell-m-1}^{k-\ell-1}
v_i^{(2b+1)(2k-2s+1)}
\\
&\qquad\cdot\psi(k,s,m-\ell-s)
+\sum_{k=0}^{m} \sum\limits_{s=2k+\ell-m+1}^kv_i^{-(2b+1)(2k-2s+1)}\psi(k,s,m+\ell+2-s).
\end{align*}
Equivalently, we need to prove
\begin{equation}\label{odd01}
\begin{split}
&\sum_{k=0}^{m+1} \sum\limits_{s=0}^k
v_i^{(2b+1)(2\ell-2k+2s+2)}\xi(k,s,m-\ell-s)
+\sum_{k=0}^{m+1} \sum\limits_{s=0}^{{{k-1}}}
v_i^{(2b+1)(2\ell+2k-2s+2)}\xi(k,s,m+\ell+2-s)\\
&=\sum_{k=0}^{m} \sum\limits_{s=0}^k
v_i^{(2b+1)(2k-2s+1)}\psi(k,s,m-\ell-s)
+\sum_{k=0}^{m} \sum\limits_{s=0}^k
v_i^{-(2b+1)(2k-2s+1)}\psi(k,s,m+\ell+2-s).
\end{split}\end{equation}
Replacing $s$ by $2k-s$ in the second term on the left-hand side of (\ref{odd01}), we obatin
\begin{align}\label{lhsodd0}
{\rm LHS}\eqref{odd01}=\sum_{k=0}^{m+1} \sum\limits_{s=0}^{2k}v_i^{(2b+1)(2\ell-2k+2s+2)}\xi(k,s,m-\ell-s).
\end{align}
Replacing $s$ by $2k-s+1$ in the second term on the right-hand side of (\ref{odd01}), we obtain
\begin{align}\label{rhsodd0}
{\rm RHS}\eqref{odd01}=\sum_{k=0}^{m} \sum\limits_{s=0}^{2k+1}v_i^{(2b+1)(2k-2s+1)}\psi(k,s,m-\ell-s).
\end{align}
By multiplying $[2m+2]^!_{v_i}$ in (\ref{lhsodd0}) and (\ref{rhsodd0}), we need to prove
\begin{align*}
& \sum_{k=0}^{m+1} \sum\limits_{s=0}^{2k}v_i^{(2b+1)(2\ell-2k+2s+2)}
\qbinom{2m+2}{2k}_{v_i}\Big(\qbinom{2k-1}{s}_{v_i^2}+\qbinom{2k-1}{s-1}_{v_i^2}\Big)\qbinom{2m-2k+2}{m-\ell-s}_{v_i^2}\\
&=\sum_{k=0}^{m} \sum\limits_{s=0}^{2k+1}v_i^{(2b+1)(2k-2s+1)}\qbinom{2m+2}{2k+1}_{v_i}\Big(\qbinom{2k}{s}_{v_i^2}+\qbinom{2k}{s-1}_{v_i^2}\Big)\qbinom{2m-2k+1}{m-\ell-s}_{v_i^2},
\end{align*}
which can be reformulated as follows via replacing $v_i$ by $v_i^{-1}$:
\begin{equation}\label{oddbdym-pre}
\begin{split}
& \sum_{k,s\geq0}v_i^{(2b+1)(2\ell-2k+2s+1)}\qbinom{2m+2}{2k+1}_{v_i}\Big(\qbinom{2k}{s}_{v_i^2}+\qbinom{2k}{s-1}_{v_i^2}\Big)\qbinom{2m-2k+1}{m-\ell-s}_{v_i^2}\\
&=\sum_{k,s\geq0}v_i^{(2b+1)(2k-2s)}\qbinom{2m+2}{2k}_{v_i}\Big(\qbinom{2k-1}{s}_{v_i^2}+\qbinom{2k-1}{s-1}_{v_i^2}\Big)\qbinom{2m-2k+2}{m-\ell-s}_{v_i^2}.
 \end{split}\end{equation}

\subsubsection{Odd $c_{ij}$ and $\ov{p}_i=1$}
\label{subsec:odd-odd}

In this case, we need to prove
    \begin{align}		\label{eq:odd-odd}
            \sum_{n=0}^{2m+2} (-1)^n  y_{i,\ov{1}}^{(n)}\cdot X_j \cdot y_{i,\ov{0}}^{(2m+2-n)} =0,
	\end{align}
    which is equivalent to
     \begin{align*}
			\sum_{k=0}^{m+1} y_{i,\ov{1}}^{(2k)}\cdot X_j \cdot y_{i,\ov{0}}^{(2m-2k+2)} =\sum_{k=0}^{m} y_{i,\ov{1}}^{(2k+1)}\cdot X_j\cdot y_{i,\ov{0}}^{(2m-2k+1)}.
	\end{align*}
    That is,
     \begin{align*}
			&\sum_{k=0}^{m+1}
            \frac{1}{[2k]^!_{v_i}}\sum\limits_{t=0}^k\qbinom{2k}{t}_{v_i^2}y_{i,2k-2t}u_i^t\cdot X_j\\
            &\quad\quad\cdot \frac{1}{[2m-2k+2]^!_{v_i}}
            \sum\limits_{s=0}^{m-k+1}(\qbinom{2m-2k+1}{s}_{v_i^2}+\qbinom{2m-2k+1}{s-1}_{v_i^2})y_{i,2m-2k+2-2s}u_i^s\\
            &=\sum_{k=0}^{m}  \frac{1}{[2k+1]^!_{v_i}}
    \sum\limits_{s=0}^k(\qbinom{2k}{s}_{v_i^2}+\qbinom{2k}{s-1}_{v_i^2})y_{i,2k+1-2s}u_i^s\cdot X_j\\
    &\quad\quad\cdot\frac{1}{[2m-2k+1]^!_{v_i}}\sum\limits_{t=0}^{m-k}\qbinom{2m-2k+1}{t}_{v_i^2}
     y_{i,2m-2k+1-2t}u_i^t.
	\end{align*}
Via replacing $k$ by $m-k+1$ on the left-hand side, it suffices to show
 \begin{align}\label{third left and right}
&\sum_{k=0}^{m+1} \sum\limits_{s=0}^k\sum\limits_{t=0}^{m-k+1}\xi(k,s,t) y_{i,2m-2k+2-2t} \cdot X_j\cdot y_{i,2k-2s}u_i^{s+t}\\\notag
=&\sum_{k=0}^{m}\sum\limits_{s=0}^k \sum\limits_{t=0}^{m-k}\psi(k,s,t) y_{i,2k+1-2s}\cdot X_j \cdot y_{i,2m-2k+1-2t} u_i^{s+t}.
\end{align}

By Lemma \ref{yxy},
it suffices to show that the coefficients of $X_i^{2\ell+2}\cdot X_j\cdot u_i^{m-\ell}=X_i^{m+\ell+2}\cdot Y_i^{m-\ell}\cdot X_j$ on both sides of \eqref{third left and right} are identical for any $-m\leq \ell\leq m$. That is,
\begin{align*}
&\sum_{k=0}^{m+1} \sum\limits_{s=0}^k\sum\limits_{\begin{subarray}{c}{t=0}\\{s+t=m-\ell}\end{subarray}}^{m-k+1}v_i^{(2b+1)(2k-2s)}\xi(k,s,t)
+\sum_{k=0}^{m+1} \sum\limits_{s=0}^k\sum\limits_{\begin{subarray}{c}t=0\\ t+s=m+\ell+2\end{subarray}}^{m-k+1}v_i^{-(2b+1)(2k-2s)}\xi(k,s,t)\\
&+\sum_{k=0}^{m+1} \sum\limits_{s=0}^{{{k-1}}}\sum\limits_{\begin{subarray}{c}
   t=0\\ t-s=m+\ell-2k+2
\end{subarray}}^{{m-k}}v_i^{(2b+1)(2k-2s)}\xi(k,s,t)
+\sum_{k=0}^{m+1} \sum\limits_{s=0}^{{k-1}}\sum\limits_{\begin{subarray}{c}t=0\\ s-t=\ell+2k-m\end{subarray}}^{{m-k}}v_i^{-(2b+1)(2k-2s)}\xi(k,s,t)\\
&=\sum_{k=0}^{m} \sum\limits_{s=0}^k\sum\limits_{\begin{subarray}{c}t=0\\
    t+s=m-\ell
\end{subarray}}^{m-k}v_i^{(2b+1)(2m-2k-2t+1)}\psi(k,s,t)
+\sum_{k=0}^{m} \sum\limits_{s=0}^k\sum\limits_{\begin{subarray}{c}
    t=0\\s+t=\ell+m+2
\end{subarray}}^{m-k}
v_i^{(2b+1)(2k+2t-2m-1)}\psi(k,s,t)\\
&+\sum_{k=0}^{m} \sum\limits_{s=0}^k\sum\limits_{\begin{subarray}{c} t=0\\s-t=\ell-m+2k+1\end{subarray}}^{{m-k}}
v_i^{(2b+1)(2m-2k-2t+1)}\psi(k,s,t)
\\
&+\sum_{k=0}^{m} \sum\limits_{s=0}^k\sum\limits_{\begin{subarray}{c}t=0\\t-s=\ell+m-2k+1\end{subarray}}^{{m-k}}v_i^{-(2b+1)(2m-2k-2t+1)}\psi(k,s,t).
\end{align*}
By symmetry, we only need to consider $0\leq \ell\leq m$. In this case, the second terms on both sides equal to zero. Thus the above equation can be reformulated as
\begin{align*}
&\sum_{k=0}^{m+1} \sum\limits_{s=k-\ell-1}^kv_i^{(2b+1)(2k-2s)}\xi(k,s,m-\ell-s)
+\sum_{k=0}^{m+1} \sum\limits_{s=2k-m-\ell-2}^{{{k-\ell-2}}}v_i^{(2b+1)(2k-2s)}\xi(k,s,m-\ell-s)\\
&+\sum_{k=0}^{m+1} \sum\limits_{s=\ell+2k-m}^{{k-1}}v_i^{-(2b+1)(2k-2s)}\xi(k,s,m+\ell+2-s)\\
=&\sum_{k=0}^{m} \sum\limits_{s=k-\ell}^kv_i^{(2b+1)(2\ell-2k+2s+1)}\psi(k,s,m-\ell-s)
+\sum_{k=0}^{m} \sum\limits_{s=2k+\ell-m+1}^kv_i^{(2b+1)(2\ell+2k-2s+3)}
\\
&\qquad\cdot\psi(k,s,m+\ell+2-s)+\sum_{k=0}^{m} \sum\limits_{s=2k-\ell-m-1}^{k-\ell-1}v_i^{-(2b+1)(2k-2\ell-2s-1)}\psi(k,s,m-\ell-s).
\end{align*}
Note that when $s<2k-m-\ell-2$, we have $m-\ell-s>2m-2k+2$, hence $\xi(k,s,m-\ell-s)=0$; while $s<2k-m-\ell-1$ implies $m-\ell-s>2m-2k+1$, hence $\psi(k,s,m-\ell-s)=0$.
Therefore, the above equation can be reformulated as
\begin{align*}
&\sum_{k=0}^{m+1} \sum\limits_{s=0}^kv_i^{(2b+1)(2k-2s)}\xi(k,s,m-\ell-s)+
\sum_{k=0}^{m+1} \sum\limits_{s=0}^{{k-1}}v_i^{-(2b+1)(2k-2s)}\xi(k,s,m+\ell+2-s)\\
=
&\sum_{k=0}^{m} \sum\limits_{s=0}^kv_i^{(2b+1)(2\ell-2k+2s+1)}\psi(k,s,m-\ell-s)
\\
&\qquad\qquad+\sum_{k=0}^{m} \sum\limits_{s=0}^kv_i^{(2b+1)(2\ell+2k-2s+3)}\psi(k,s,m+\ell+2-s),
\end{align*}
which is equivalent to (\ref{odd01}) by taking $v_i$ to $v_i^{-1}$, and then equivalent to \eqref{oddbdym-pre}.

In summary of the computations carried out in  \S\ref{subsubsec:odd-even}--\S\ref{subsec:odd-odd}, we have established the following statement.
\begin{proposition}\label{prop4.2.1}
    Assume $c_{ij}=-(2m+1)$ is odd.
    Then \eqref{yzgx} holds if and only if for any $0\leq \ell\leq m$ and $-(m+1)\leq b\leq m$,
\begin{equation}\label{oddbdym}
\begin{split}
& \sum_{k,s\geq0}v_i^{(2b+1)(2\ell-2k+2s+1)}\qbinom{2m+2}{2k+1}_{v_i}\Big(\qbinom{2k}{s}_{v_i^2}+\qbinom{2k}{s-1}_{v_i^2}\Big)\qbinom{2m-2k+1}{m-\ell-s}_{v_i^2}\\
&=\sum_{k,s\geq0}v_i^{(2b+1)(2k-2s)}\qbinom{2m+2}{2k}_{v_i}\Big(\qbinom{2k-1}{s}_{v_i^2}+\qbinom{2k-1}{s-1}_{v_i^2}\Big)\qbinom{2m-2k+2}{m-\ell-s}_{v_i^2}.
 \end{split}\end{equation}
\end{proposition}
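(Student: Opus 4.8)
The plan is to run, for odd $c_{ij}=-(2m+1)$ and both parities $\ov{p}_i\in\Z/2\Z$, the same expand-and-compare scheme already used in the even case, and then to check that the two resulting identities coincide with the single family \eqref{oddbdym}. Since every manipulation below is reversible, this gives both directions of the claimed equivalence at once.

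First I would invoke Proposition \ref{prop3.5} to replace \eqref{yzgx0} by \eqref{yzgx}, so that only $X_j$ sits between the $\imath$divided powers. Splitting the alternating sum $\sum_{n=0}^{2m+2}(-1)^n y_{i,\ov{p}_i}^{(n)}\cdot X_j\cdot y_{i,\ov{c_{ij}}+\ov{p}_i}^{(2m+2-n)}$ according to the parity of $n$ rewrites \eqref{yzgx} as an equality between the even-$n$ sum and the odd-$n$ sum. Substituting the closed formulas of Propositions \ref{prop:idivided-even}--\ref{prop:idivided-odd} for the $\imath$divided powers and reindexing $k\mapsto m-k$ on one side brings the identity to the shape \eqref{four left and right} when $\ov{p}_i=0$ and \eqref{third left and right} when $\ov{p}_i=1$, with the explicit coefficient functions $\xi(k,s,t)$ and $\psi(k,s,t)$ displayed there.

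Next, Lemma \ref{yxy} together with $\n_{ij}-\n_{ji}=(2b+1)d_i$ expands each product $y_{i,r}\cdot X_j\cdot y_{i,t}$ into four monomials $X_i^{a}Y_i^{c}X_j$ with coefficients that are powers of $v_i$. As these monomials are linearly independent in $\ct_{\mathbf{i}}(Q)$, it suffices to compare, for each $-m\le\ell\le m$, the coefficient of the representative $X_i^{2\ell+2}\cdot X_j\cdot u_i^{m-\ell}=X_i^{m+\ell+2}\cdot Y_i^{m-\ell}\cdot X_j$. The involution $\sigma$, which fixes $u_i$ and swaps $X_i\leftrightarrow Y_i$, makes $\ell$ and $-\ell$ equivalent, so one restricts to $0\le\ell\le m$; in that range the ``$Y_i$-dominant'' terms vanish, and after the substitutions $s\mapsto 2k-s$ (respectively $s\mapsto 2k-s+1$) in the second sums and clearing the common factor $[2m+2]^!_{v_i}$ one arrives at \eqref{oddbdym-pre}. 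A short direct computation shows that the $\ov{p}_i=1$ case yields exactly the same equation after sending $v_i\mapsto v_i^{-1}$, so both parities reduce to \eqref{oddbdym}.

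The hard part is purely combinatorial bookkeeping: one must keep careful track of the many nested summation ranges and, in particular, recognize which boundary terms are automatically zero because $\xi(k,s,\cdot)$ or $\psi(k,s,\cdot)$ vanishes outside its natural range (for instance $\xi(k,s,m-\ell-s)=0$ once $s<2k-m-\ell-2$). Getting those ranges exactly right is what makes the two parity computations collapse onto the single identity \eqref{oddbdym}; any slip produces spurious terms. Proving \eqref{oddbdym} itself is a separate and genuinely difficult task, postponed in the paper: one first settles the case $b=m$ --- equivalently, the Feigin map for the quiver without $2$-cycles (Proposition \ref{teshuqx}) --- and then obtains the general $b$ via the non-standard Serre--Lusztig relations of Proposition \ref{prop:SLinduct}.
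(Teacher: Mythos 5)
Your proposal reproduces the paper's argument essentially step for step: Proposition~\ref{prop3.5} to reduce to \eqref{yzgx}, the parity split with Propositions~\ref{prop:idivided-even}--\ref{prop:idivided-odd} feeding into \eqref{four left and right} and \eqref{third left and right}, Lemma~\ref{yxy} with $\n_{ij}-\n_{ji}=(2b+1)d_i$, the $\sigma$-symmetry to restrict to $0\le\ell\le m$ and drop the vanishing boundary terms, and the reindexings $s\mapsto 2k-s$ (resp.\ $s\mapsto 2k-s+1$) and $v_i\mapsto v_i^{-1}$ to merge both parities into \eqref{oddbdym}. This is the same route as the paper's \S\ref{subsubsec:odd-even}--\S\ref{subsec:odd-odd}, with no substantive deviation.
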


\section{A special case: without 2-cycles}
\label{sec:without2-loop}

The proof of the main theorem has been converted to the verification of the combinatorial identities \eqref{ou0}, \eqref{evenp=1} and \eqref{oddbdym} in Section \ref{sec:iSerre}.
It is rather challenging to prove these combinatorial identities 
directly. To circumvent this issue, in this section we will prove the main theorem for the quivers without $2$-cycles, i.e., either $\lr{e_i,e_j}_Q=0$ or
 $\lr{e_j,e_i}_Q=0$ for any $i\neq j\in\I$.
 \begin{proposition}
    \label{teshuqx}
    Let $Q$ be any valued quiver such that either $\lr{e_i,e_j}_Q=0$ or
 $\lr{e_j,e_i}_Q=0$ for any $i\neq j\in\I$.
    Then
there exists an algebra homomorphism $\varphi_{\mathbf{i}}: \tUi \rightarrow \mathcal{T}_\mathbf{i}(Q)$ defined as \eqref{eq:phi-form1}--\eqref{eq:phi-form2}.
\end{proposition}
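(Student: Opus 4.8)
The plan is to carry out the reduction of Sections~\ref{sec:i-divided-QT}--\ref{sec:iSerre} and to note that, under the no-$2$-cycle hypothesis, only one value of the parameter $b$ can occur. By Propositions~\ref{prop3.5}, \ref{prop:reduce-even-even}, \ref{prop:even and p=1} and~\ref{prop4.2.1}, the existence of $\varphi_\mathbf{i}$ is equivalent to the identities \eqref{ou0}, \eqref{evenp=1} and \eqref{oddbdym}, in which $b$ is determined by $Q$ through $\n_{ij}=\lr{e_i,e_j}_Q$ and $\n_{ji}=\lr{e_j,e_i}_Q$. The assumption that $Q$ has no $2$-cycles says precisely that for every pair $i\neq j$ one of $\n_{ij}$, $\n_{ji}$ vanishes; in the notation of \S\ref{subsec:even-even} and \S\ref{subsec:odd-even} this forces $b\in\{m,-m\}$ when $c_{ij}$ is even and $b\in\{m,-m-1\}$ when $c_{ij}$ is odd. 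Since $[-n]_{v_i}=-[n]_{v_i}$, the $b=-m$ instances of \eqref{ou0} and \eqref{evenp=1} are $(-1)$ times the $b=m$ instances; and \eqref{oddbdym} is invariant under the simultaneous substitution $v_i\mapsto v_i^{-1}$, $b\mapsto-b-1$, which swaps $b=m$ and $b=-m-1$. Hence it is enough to establish \eqref{ou0}, \eqref{evenp=1} and \eqref{oddbdym} for $b=m$; equivalently, it is enough to verify the $\imath$Serre relation \eqref{yzgx} in $\mathcal{T}_\mathbf{i}(Q)$ when $\n_{ij}=0$, in which case $X_jX_i=v_i^{-c_{ij}}X_iX_j$ and $X_jY_i=v_i^{c_{ij}}Y_iX_j$.

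I would treat the case $\n_{ij}=0$ directly, working with \eqref{yzgx} instead of the binomial identities. The crucial point is that $X_i$ and $Y_i$ commute, so they generate a commutative subalgebra $R_i$ of $\mathcal{T}_\mathbf{i}(Q)$, over which every $\imath$divided power of $y_i$ factors into linear forms. Indeed, from the elementary identity $2+(v_i-v_i^{-1})^2[\ell]_{v_i}^2=v_i^{2\ell}+v_i^{-2\ell}$ one gets
\[
y_i^2+(v_i-v_i^{-1})^2[\ell]_{v_i}^2\,u_i=(X_i+v_i^{2\ell}Y_i)(X_i+v_i^{-2\ell}Y_i),
\]
so that $\ffy_{i,\ev}^{(n)}$ and $\ffy_{i,\odd}^{(n)}$ each equal $\tfrac{1}{[n]^!_{v_i}}$ times a product of $n$ linear forms $X_i+v_i^{c}Y_i$, with $c$ running over an explicit integer list that is symmetric about $0$ (a doubled $0$ occurring for even $n$). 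The first step is to record this factorization as a lemma. Conjugation by $X_j$ acts on $R_i$ by $X_i\mapsto v_i^{c_{ij}}X_i$, $Y_i\mapsto v_i^{-c_{ij}}Y_i$ and fixes $u_i$; hence it sends $X_i+v_i^{c}Y_i\mapsto v_i^{c_{ij}}(X_i+v_i^{c-2c_{ij}}Y_i)$, that is, a \emph{uniform} shift $c\mapsto c-2c_{ij}$ of all exponents. Pushing $X_j$ to the left in each summand of \eqref{yzgx} then converts \eqref{yzgx} into the statement that a single alternating sum of products of linear forms $X_i+v_i^{c}Y_i$ vanishes in $R_i$; reading off the coefficient of $X_i^{2\ell+1}u_i^{m-\ell}$, respectively of $X_i^{2\ell+2}u_i^{m-\ell}$, recovers exactly \eqref{ou0} and \eqref{evenp=1}, respectively \eqref{oddbdym}, at $b=m$.

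It then remains to prove this vanishing of an alternating sum of products of linear forms. The natural tool is the $q$-binomial theorem: after rescaling to base $v_i^2$, each of the two products appearing in a summand runs over an arithmetic progression of exponents, so that \eqref{2.2} --- with the help of \eqref{fkhgs} and Lemma~\ref{Lus} --- expands each summand in $q$-binomial coefficients, and the alternating sum collapses by a $q$-Vandermonde-type cancellation. It is precisely at $b=m$ that conjugation by $X_j$ shifts all exponents by $-2c_{ij}$, the amount for which such a cancellation over $0\leq n\leq 1-c_{ij}$ goes through; for other values of $b$ this shift is wrong, which is exactly why the general identities \eqref{ou0}, \eqref{evenp=1}, \eqref{oddbdym} are hard and why we do not attack them directly. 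The main obstacle I anticipate is the bookkeeping in this last step: one must keep track of the two parities $\ov{p}_i$ and $\ov{c_{ij}}+\ov{p}_i$, of the doubled exponents carried by the even-degree $\imath$divided powers, and of the boundary terms of the various summation ranges, and verify that all of this assembles into the clean $q$-binomial identity. Once \eqref{ou0}, \eqref{evenp=1} and \eqref{oddbdym} are verified at $b=m$, Propositions~\ref{prop:reduce-even-even}, \ref{prop:even and p=1} and~\ref{prop4.2.1} yield the $\imath$Serre relations, and hence the homomorphism $\varphi_\mathbf{i}$ defined by \eqref{eq:phi-form1}--\eqref{eq:phi-form2}.
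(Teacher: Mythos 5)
Your reduction to $b=m$ is correct and matches what the paper does: when $c_{ij}$ is even the no-$2$-cycle hypothesis gives $b\in\{m,-m\}$ and the symmetry $[-n]_{v_i}=-[n]_{v_i}$ collapses these, while when $c_{ij}$ is odd it gives $b\in\{m,-m-1\}$ and the substitution $v_i\mapsto v_i^{-1}$, which sends $2b+1\mapsto -(2b+1)$, identifies the two. The factorization
\[
y_i^2+(v_i-v_i^{-1})^2[\ell]_{v_i}^2\,u_i=(X_i+v_i^{2\ell}Y_i)(X_i+v_i^{-2\ell}Y_i)
\]
is correct (it rests on $2+(v_i-v_i^{-1})^2[\ell]_{v_i}^2=v_i^{2\ell}+v_i^{-2\ell}$), and since $X_i$ and $Y_i$ commute, the $\imath$divided powers do factor into products of linear forms whose exponents run over an arithmetic progression with common difference $4$. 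This is a genuinely different and rather clean way of reproducing the content of Propositions~\ref{prop:idivided-even}--\ref{prop:idivided-odd}, and your observation that moving $X_j$ past a linear form produces a \emph{uniform} exponent shift (exactly the right amount when $\lr{e_i,e_j}_Q=0$, i.e.\ $b=m$) is the correct conceptual explanation of why the no-$2$-cycle case is the tractable one.

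The gap is in the final step, and it is the heart of the proposition. You assert that after expanding via the $q$-binomial theorem, ``the alternating sum collapses by a $q$-Vandermonde-type cancellation,'' but you neither carry this out nor identify which identities are actually needed, and the paper shows this is optimistic. Applying $q$-Vandermonde \eqref{2.2} collapses only the \emph{inner} sums and leaves nontrivial single sums over $k$ (see \eqref{ou0bm}, \eqref{8zhishu}, \eqref{512bm}), whose vanishing then requires the whole chain Lemma~\ref{lem:qbinom1}, Lemma~\ref{lem:L-2}, Lemma~\ref{lem:L-1}, Proposition~\ref{propA.4} and Proposition~\ref{propA.5}; the last two are not one-line consequences of Vandermonde and need their own derivations via \eqref{fkhgs}. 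So your route, while nicer in getting \emph{to} the identities \eqref{ou0}, \eqref{evenp=1}, \eqref{oddbdym} at $b=m$, stops precisely where the real work begins: those three identities are still unproved, and ``a $q$-Vandermonde-type cancellation'' is not a proof of them.
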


\begin{proof}
By Propositions \ref{prop:reduce-even-even}, \ref{prop:even and p=1} and \ref{prop4.2.1}, it is enough to prove \eqref{ou0}, \eqref{evenp=1} and \eqref{oddbdym}  for the cases
\begin{align*}
    b=\begin{cases}
        \pm m, &\text{ if }2\mid c_{ij},
        \\
        m\text{ or }-(m+1), &\text{ if }2\nmid c_{ij}.
    \end{cases}
\end{align*}
Replacing $v_i$   by $v_i^{-1}$ in these identities, we only need to consider the case $b=m$. Then the proof is reduced to the verification of the following identities for any $0\leq \ell\leq m$ (by taking $v$ to $v_i$):
\begin{align}
\label{eqn:binorm-form11}
&\sum_{k=0}^{m} \sum\limits_{s=0}^{2k}[m(2\ell+1-4k+4s)]_v\qbinom{2m+1}{2k}_v\Big(\qbinom{2k-1}{s}_{v^2}+\qbinom{2k-1}{s-1}_{v^2}\Big)\qbinom{2m-2k+1}{m-\ell-s}_{v^2}=0,
\end{align}
\begin{align}\label{eq:evenp=1b=m}
       \begin{split}
     \sum_{k=0}^{m} \sum\limits_{s=0}^{2k+1}
            [m(2\ell-1-4k+4s)]_v\qbinom{2m+1}{2k+1}_v\Big(\qbinom{2k}{s}_{v^2}+\qbinom{2k}{s-1}_{v^2}\Big)\qbinom{2m-2k}{m-\ell-s}_{v^2}=0,
             \end{split}
    \end{align}
    \begin{align}\label{eq:oddp=1b=m}
       \begin{split}
     & \sum_{k,s\geq0}v^{(2m+1)(2\ell-2k+2s+1)}\qbinom{2m+2}{2k+1}_v\Big(\qbinom{2k}{s}_{v^2}+\qbinom{2k}{s-1}_{v^2}\Big)\qbinom{2m-2k+1}{m-\ell-s}_{v^2}\\
&=\sum_{k,s\geq0}v^{(2m+1)(2k-2s)}\qbinom{2m+2}{2k}_v\Big(\qbinom{2k-1}{s}_{v^2}+\qbinom{2k-1}{s-1}_{v^2}\Big)\qbinom{2m-2k+2}{m-\ell-s}_{v^2}.
             \end{split}
    \end{align}
   Thus, the proof is completed after we prove \eqref{eqn:binorm-form11}, \eqref{eq:evenp=1b=m} and \eqref{eq:oddp=1b=m} in the remainder of this section.
\end{proof}

\subsection{Proof of \eqref{eqn:binorm-form11}}
\label{subsec:proof-binorm-form11}

Set $t=(2k-1)(m-\ell-s)-s(2m-2k+1)$. Then by \eqref{2.2}, we have
\begin{align*}
\sum_{s=0}^{2k}v^{-2t}\qbinom{2k-1}{s}_{v^2}\qbinom{2m-2k+1}{m-\ell-s}_{v^2}
=\qbinom{2m}{m-\ell}_{v^2}.
\end{align*}
Note that $m(2\ell+1-4k+4s)+2t=2m\ell+2\ell-m-4k\ell$. Hence
\begin{align*}
&\sum_{s=0}^{2k}v^{m(2\ell+1-4k+4s)}\qbinom{2k-1}{s}_{v^2}\qbinom{2m-2k+1}{m-\ell-s}_{v^2}=v^{2m\ell+2\ell-m-4k\ell}\qbinom{2m}{m-\ell}_{v^2}.
\end{align*}
Replacing $v$ by $v^{-1}$, we have
\begin{align*}
\sum_{s=0}^{2k}v^{-m(2\ell+1-4k+4s)}\qbinom{2k-1}{s}_{v^2}\qbinom{2m-2k+1}{m-\ell-s}_{v^2}
=v^{-2m\ell-2\ell+m+4k\ell}\qbinom{2m}{m-\ell}_{v^2}.
\end{align*}
Similarly, we have
\begin{align*}
&\sum_{s=0}^{2k}v^{m(2\ell+1-4k+4s)}\qbinom{2k-1}{s-1}_{v^2}\qbinom{2m-2k+1}{m-\ell-s}_{v^2}
=v^{2m\ell+3m+2+2\ell-4k\ell-4k}\qbinom{2m}{m-\ell-1}_{v^2}.
\end{align*}
Replacing $v$ by $v^{-1}$, we have
\begin{align*}
\sum_{s=0}^{2k}v^{-m(2\ell+1-4k+4s)}\qbinom{2k-1}{s-1}_{v^2}\qbinom{2m-2k+1}{m-\ell-s}_{v^2}=v^{-2m\ell-3m-2-2\ell+4k\ell+4k}\qbinom{2m}{m-\ell-1}_{v^2}.
\end{align*}
Summarizing the above discussions, we have established that
 \eqref{eqn:binorm-form11} is equivalent to \begin{equation}\label{ou0bm}
\begin{split}
&\sum_{k=0}^m\Big(v^{2m\ell+2\ell-m-4k\ell}- v^{-(2m\ell+2\ell-m-4k\ell)}\Big)\qbinom{2m+1}{2k}_{v}\qbinom{2m}{m-\ell}_{v^2}\\
&+\sum_{k=0}^m \Big(v^{2m\ell+3m+2+2\ell-4k\ell-4k}-v^{-(2m\ell+3m+2+2\ell-4k\ell-4k)}\Big)\qbinom{2m+1}{2k}_{v}\qbinom{2m}{m-\ell-1}_{v^2}=0.
\end{split}
\end{equation}
Now the proof of \eqref{eqn:binorm-form11} is reduced to the following.
\begin{lemma}
The equation \eqref{ou0bm} holds
for any $0\leq \ell\leq m$.
\end{lemma}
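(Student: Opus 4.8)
The plan is to evaluate the even-indexed $q$-binomial sums appearing in \eqref{ou0bm} in closed product form and then reduce everything to a trivial monomial identity. The starting point is the symmetric form of Gauss's $q$-binomial theorem,
\[
\sum_{j=0}^{2m+1}\qbinom{2m+1}{j}_{v}x^{j}=\prod_{j=-m}^{m}\bigl(1+v^{2j}x\bigr),
\]
which follows from \eqref{fkhgs} (with $\epsilon=1$) by induction on $m$. Replacing $x$ by $-x$ and averaging extracts the even part, and specializing $x=v^{-2c}$ gives, for every integer $c$ with $0\le c\le m$,
\[
\sum_{k\ge 0}\qbinom{2m+1}{2k}_{v}v^{-4ck}
=\tfrac12\Bigl(\,\prod_{j=-m-c}^{m-c}(1+v^{2j})+\prod_{j=-m-c}^{m-c}(1-v^{2j})\,\Bigr)
=\tfrac12\,P_{c},\qquad P_{c}:=\prod_{j=-m-c}^{m-c}(1+v^{2j}),
\]
the second product vanishing because it contains the factor $1-v^{0}=0$ coming from the index $j=0$ (here one uses $-m-c\le 0\le m-c$, i.e.\ $c\le m$).

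Apply this with $c=\ell$ and, when $\ell\le m-1$, with $c=\ell+1$. The coefficients $\qbinom{2m+1}{2k}_{v}$ are invariant under $v\mapsto v^{-1}$, and pulling $v^{-2j}$ out of each of the $2m+1$ factors of $P_{c}$ gives $P_{c}(v^{-1})=v^{2c(2m+1)}P_{c}$; these two facts turn the two $k$-sums in \eqref{ou0bm} into closed expressions. Writing $A:=2m\ell+2\ell-m$ and $B:=2m\ell+3m+2+2\ell$ and using the identity $2\ell(2m+1)-A=2(\ell+1)(2m+1)-B=m(2\ell+1)$ (so that the reflections of $P_{\ell}$ and $P_{\ell+1}$ produce the \emph{same} monomial $v^{m(2\ell+1)}$), together with $v^{A}-v^{m(2\ell+1)}=-v^{A}(v^{2(m-\ell)}-1)$ and $v^{B}-v^{m(2\ell+1)}=v^{m(2\ell+1)}(v^{2(m+\ell+1)}-1)$, one finds that \eqref{ou0bm} is equivalent to
\[
\qbinom{2m}{m-\ell}_{v^{2}}\,v^{A}\bigl(v^{2(m-\ell)}-1\bigr)P_{\ell}
=\qbinom{2m}{m-\ell-1}_{v^{2}}\,v^{m(2\ell+1)}\bigl(v^{2(m+\ell+1)}-1\bigr)P_{\ell+1}.
\]
For $\ell=m$ both sides vanish (the left one carries the factor $v^{2(m-\ell)}-1=0$, the right one the coefficient $\qbinom{2m}{-1}_{v^{2}}=0$), so assume $0\le\ell\le m-1$ from now on.

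To finish, factor out the common sub-product $R:=\prod_{j=-m-\ell}^{m-\ell-1}(1+v^{2j})$, so that $P_{\ell}=(1+v^{2(m-\ell)})R$ and $P_{\ell+1}=(1+v^{-2(m+\ell+1)})R$. Then $(v^{2(m-\ell)}-1)(1+v^{2(m-\ell)})=v^{4(m-\ell)}-1=(v^{2}-v^{-2})\,v^{2(m-\ell)}[m-\ell]_{v^{2}}$, and $(v^{2(m+\ell+1)}-1)(1+v^{-2(m+\ell+1)})=v^{2(m+\ell+1)}-v^{-2(m+\ell+1)}=(v^{2}-v^{-2})[m+\ell+1]_{v^{2}}$. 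Cancelling $R(v^{2}-v^{-2})$ and invoking the recursion $\qbinom{2m}{m-\ell-1}_{v^{2}}[m+\ell+1]_{v^{2}}=\qbinom{2m}{m-\ell}_{v^{2}}[m-\ell]_{v^{2}}$, the identity collapses to the monomial equality $v^{A+2(m-\ell)}=v^{m(2\ell+1)}$, which holds because $A+2(m-\ell)=2m\ell+m=m(2\ell+1)$.

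I expect the only genuine difficulty to be the bookkeeping of powers of $v$: checking that $v^{-A}P_{\ell}(v^{-1})$ and $v^{-B}P_{\ell+1}(v^{-1})$ contribute exactly the same monomial (which is what lets the two sums in \eqref{ou0bm} be compared directly), and then tracking the powers through the final cancellation. The same scheme --- a product formula for the relevant even- or odd-indexed $q$-binomial sum, the $v\mapsto v^{-1}$ reflection, and cancellation of a shared sub-product --- should equally handle the companion identities \eqref{eq:evenp=1b=m} and \eqref{eq:oddp=1b=m}, with $\qbinom{2m+1}{2k+1}_{v}$, respectively $\qbinom{2m+2}{2k}_{v}$ and $\qbinom{2m+2}{2k+1}_{v}$, replacing $\qbinom{2m+1}{2k}_{v}$.
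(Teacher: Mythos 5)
Your argument is correct, and it takes a genuinely different and shorter route than the paper's proof. The paper handles the case $\ell=m$ via Lemma \ref{lem:L-1}, and for $\ell<m$ it clears the denominators, uses Lemma \ref{lem:L-1} to reflect the $v^{-4k\ell}$ and $v^{-4k\ell-4k}$ exponents, and then reduces the resulting eight-term exponential sum to the separately-proved identity \eqref{eq:odd-even-reduced} of Proposition \ref{propA.4}, which in turn rests on Pascal's rule \eqref{fkhgs} and the alternating-sum vanishing of Lemma \ref{lem:qbinom1}. You instead invoke the Gaussian $q$-binomial product formula $\sum_{j=0}^{2m+1}\qbinom{2m+1}{j}_v x^j=\prod_{j=-m}^m(1+v^{2j}x)$, extract the even part, and observe that the ``odd'' companion product $\prod_{j=-m-c}^{m-c}(1-v^{2j})$ vanishes for $0\le c\le m$ because the index $j=0$ contributes a factor $1-v^0=0$ --- precisely the mechanism behind Lemma \ref{lem:qbinom1}, applied at the source. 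This yields a closed product form $\tfrac12 P_c$ for the even-indexed sum, after which both $k$-sums in \eqref{ou0bm} collapse to products, the $v\leftrightarrow v^{-1}$ reflection of $P_c$ is a pure monomial shift, and the final verification reduces to cancellation of a shared sub-product $R$ and the one-line check $A+2(m-\ell)=m(2\ell+1)$. What the product formula buys you is that it subsumes Lemmas \ref{lem:qbinom1}--\ref{lem:L-1} and Proposition \ref{propA.4} in a single stroke, so the appendix machinery the paper relies on is no longer needed.

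One small presentational caveat: your factorized equivalent was derived under the hypothesis $\ell\le m-1$ (needed so that the closed form applies with $c=\ell+1$), and then you dispose of $\ell=m$ by inspection of that factorized form. It would be cleaner to note that for $\ell=m$ the original equation \eqref{ou0bm} holds directly: your product formula with $c=m$ shows the first sum equals $\tfrac12\qbinom{2m}{0}_{v^2}P_m(v^A-v^{m(2m+1)})=0$ since $A=m(2m+1)$ there, while the second sum carries the factor $\qbinom{2m}{-1}_{v^2}=0$. This is a cosmetic rephrasing, not a gap --- the conclusion is the same.
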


\begin{proof}
If $\ell=m$, the equation (\ref{ou0bm}) follows from Lemma \ref{lem:L-1}.

For $0\leq\ell<m$, by multiplying $\frac{[m-\ell]_{v^2}^![m+\ell+1]_{v^2}^!(v^2-v^{-2})}{[2m]_{v^2}^!}$, it suffices to show
\begin{align*}
&\sum_{k=0}^m\Big(v^{2m\ell+2\ell-m-4k\ell}- v^{-(2m\ell+2\ell-m-4k\ell)}\Big)\Big(v^{2m+2\ell+2}-v^{-2m-2\ell-2}\Big)\qbinom{2m+1}{2k}_{v}+\\
&\sum_{k=0}^m\Big(v^{2m\ell+3m+2+2\ell-4k\ell-4k}-v^{-(2m\ell+3m+2+2\ell-4k\ell-4k)}\Big)\Big(v^{2m-2\ell}-v^{-2m+2\ell}\Big)\qbinom{2m+1}{2k}_{v}=0.\end{align*}
That is, we need to prove
\begin{align}
\notag
&\sum_{k=0}^m\qbinom{2m+1}{2k}_{v}\Big( v^{2m\ell+4\ell +m+2-4k\ell} -v^{2m\ell-3m-2-4k\ell} -v^{-(2m\ell-3m-2-4k\ell)} +v^{-(2m\ell+4\ell +m+2-4k\ell)}+\\\notag
&v^{2m\ell  +5m+2-4k\ell-4k} -v^{2m\ell+m+2+4\ell-4k\ell-4k} -v^{-(2m\ell+m+2+4\ell-4k\ell-4k)}+v^{-(2m\ell  +5m+2-4k\ell-4k)}\Big)\\\label{ou0bmt}
&=0.
\end{align}

By using Lemma \ref{lem:L-1}, we can replace $v^{-4k\ell}$ by $v^{4k\ell-(4m+2)\ell}$ in the first and second terms, and replace $v^{-4k\ell-4k}$ by $v^{4k\ell+4k-(4m+2)(\ell+1)}$ in the fifth and sixth terms in (\ref{ou0bmt}). Then it suffices to prove
\begin{align*}
   & \sum_{k=0}^m  v^{4k\ell-2m\ell} \Big(v^{2\ell+m+2}-v^{-2\ell-3m-2}-v^{3m+2}+v^{-4\ell-m-2}\Big)\qbinom{2m+1}{2k}_{v}
   \\
   &=-\sum_{k=0}^m  v^{4k(\ell+1)-2m\ell} \Big(v^{m-2\ell}-v^{-3m+2\ell}-v^{-m-2-4\ell}+v^{-5m-2}\Big)\qbinom{2m+1}{2k}_{v}.
\end{align*}
That is,
\begin{align*}
   & \sum_{k=0}^m  v^{4k\ell-2m\ell} \Big(v^{-\ell}(v^{2\ell+2m+2}-v^{-(2\ell+2m+2)})(v^{\ell-m}-v^{m-\ell})\Big)\qbinom{2m+1}{2k}_{v}
   \\
  & =\sum_{k=0}^m v^{4k(\ell+1)-2m\ell} \Big(v^{-2m-1-\ell} (v^{m+1+\ell}-v^{-(m+1+\ell)})(v^{2\ell-2m}-v^{2m-2\ell})\Big)\qbinom{2m+1}{2k}_{v}.
\end{align*}
Equivalently, we need to prove

\begin{align*}
\sum_{k=0}^m  v^{4k\ell} v^{2m+1} (v^{m+1+\ell}+v^{-(m+1+\ell)})\qbinom{2m+1}{2k}_{v}
=\sum_{k=0}^m v^{4k(\ell+1)} (v^{\ell-m}+v^{m-\ell})\qbinom{2m+1}{2k}_{v}.
\end{align*}
Then (\ref{ou0bm}) follows from Proposition \ref{propA.4}.
\end{proof}

\subsection{Proof of \eqref{eq:evenp=1b=m}}
Using similar arguments as given in \S\ref{subsec:proof-binorm-form11}, we need to prove
\begin{align*}
  &\sum_{k=0}^m\Big(v^{4m\ell-4k\ell}-v^{2m+4k\ell}\Big)\qbinom{2m+1}{2k+1}_{v}\qbinom{2m}{m-\ell}_{v^2}
   \\
   &+\sum_{k=0}^m \Big(v^{4m\ell+4m-4k\ell-4k}-v^{-2m+4k+4k\ell}\Big)\qbinom{2m+1}{2k+1}_{v}\qbinom{2m}{m-\ell-1}_{v^2}=0.
\end{align*}
Via replacing $k$ by $m-k$, we need to prove
\begin{align*}
  &\sum_{k=0}^m\Big(v^{4k\ell}-v^{2m+4m\ell-4k\ell}\Big)\qbinom{2m+1}{2k}_{v}\qbinom{2m}{m-\ell}_{v^2}
   \\
   &+\sum_{k=0}^m \Big(v^{4k\ell+4k}-v^{2m+4m\ell-4k-4k\ell}\Big)\qbinom{2m+1}{2k}_{v}\qbinom{2m}{m-\ell-1}_{v^2}=0.
\end{align*}
Then the proof is reduced to the following.
\begin{lemma}
For any $0\leq \ell\leq m$, we have
\begin{align*}
  &\sum_{k=0}^m\Big(v^{2m\ell+m-4k\ell}- v^{-(2m\ell+m-4k\ell)}\Big)\qbinom{2m+1}{2k}_{v}\qbinom{2m}{m-\ell}_{v^2}
   \\
   &+\sum_{k=0}^m \Big(v^{2m\ell+m-4k\ell-4k}-v^{-(2m\ell+m-4k\ell-4k)}\Big)\qbinom{2m+1}{2k}_{v}\qbinom{2m}{m-\ell-1}_{v^2}=0.
\end{align*}
\end{lemma}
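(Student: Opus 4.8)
The plan is to mimic the proof of the companion identity \eqref{ou0bm} step by step. First I would dispose of the boundary case $\ell=m$: there $\qbinom{2m}{m-\ell-1}_{v^2}=\qbinom{2m}{-1}_{v^2}=0$, so the second sum drops out, and since the exponent $2m\ell+m-4k\ell$ then equals $2m^2+m-4km$, what remains is exactly the $\ell=m$ instance of \eqref{ou0bm}, and hence follows from Lemma \ref{lem:L-1}.

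For $0\le\ell<m$ I would multiply the claimed identity through by $\frac{[m-\ell]_{v^2}^![m+\ell+1]_{v^2}^!(v^2-v^{-2})}{[2m]_{v^2}^!}$, exactly as in \S\ref{subsec:proof-binorm-form11}. This replaces $\qbinom{2m}{m-\ell}_{v^2}$ by $v^{2m+2\ell+2}-v^{-(2m+2\ell+2)}$ and $\qbinom{2m}{m-\ell-1}_{v^2}$ by $v^{2m-2\ell}-v^{-(2m-2\ell)}$, leaving an identity involving only the $v$-binomials $\qbinom{2m+1}{2k}_v$ weighted by monomials of the form $v^{\alpha-4k\ell}$ and $v^{\beta-4k\ell-4k}$. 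Then I would invoke Lemma \ref{lem:L-1} to rewrite $v^{-4k\ell}$ as $v^{4k\ell-(4m+2)\ell}$ and $v^{-4k\ell-4k}$ as $v^{4k\ell+4k-(4m+2)(\ell+1)}$, so that every surviving term carries either $v^{4k\ell}$ or $v^{4k(\ell+1)}$. Collecting the two resulting coefficients and factoring repeatedly via $v^{2x}-v^{-2x}=(v^x-v^{-x})(v^x+v^{-x})$, one finds that the $v^{4k\ell}$-coefficient equals $v^{-2m\ell-\ell}(v^{m-\ell}-v^{\ell-m})(v^{m+\ell+1}-v^{-(m+\ell+1)})(v^{m+\ell+1}+v^{-(m+\ell+1)})$ and the $v^{4k(\ell+1)}$-coefficient equals $-v^{-2m\ell-2m-\ell-1}(v^{m-\ell}-v^{\ell-m})(v^{m+\ell+1}-v^{-(m+\ell+1)})(v^{m-\ell}+v^{\ell-m})$.

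Cancelling the common factor $v^{-2m\ell-\ell}(v^{m-\ell}-v^{\ell-m})(v^{m+\ell+1}-v^{-(m+\ell+1)})$ — which is nonzero because $\ell<m$ — reduces the assertion to
\[
\sum_{k=0}^m v^{4k\ell}\,v^{2m+1}\big(v^{m+1+\ell}+v^{-(m+1+\ell)}\big)\qbinom{2m+1}{2k}_v=\sum_{k=0}^m v^{4k(\ell+1)}\big(v^{\ell-m}+v^{m-\ell}\big)\qbinom{2m+1}{2k}_v,
\]
which is exactly the identity derived from Proposition \ref{propA.4} at the end of \S\ref{subsec:proof-binorm-form11}. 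This completes the plan.

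I expect the only real effort to be the exponent bookkeeping in the two middle steps: clearing the $v^2$-denominators, and — crucially — tracking which monomials Lemma \ref{lem:L-1} rewrites and verifying that the two coefficients which emerge indeed both factor through $(v^{m-\ell}-v^{\ell-m})(v^{m+\ell+1}-v^{-(m+\ell+1)})$, so that after cancellation one lands precisely on the identity already handled by Proposition \ref{propA.4}. No new idea beyond the proof of \eqref{ou0bm} is needed.
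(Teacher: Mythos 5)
Your plan is correct and follows the paper's proof line for line: handle $\ell=m$ via Lemma~\ref{lem:L-1}, clear the $v^2$-binomial denominators, rewrite the $v^{-4k\ell}$ and $v^{-4k\ell-4k}$ monomials with Lemma~\ref{lem:L-1}, factor the resulting $v^{4k\ell}$- and $v^{4k(\ell+1)}$-coefficients, and cancel down to the identity handled by Proposition~\ref{propA.4}. The only cosmetic difference is that you work from the symmetric form $v^a-v^{-a}$ of the lemma while the paper reverts to the form $v^{4k\ell}-v^{2m+4m\ell-4k\ell}$ displayed just before the lemma (the two differ by a global factor $-v^{m+2m\ell}$), and I have checked that your two claimed coefficient factorizations, as well as the final reduced identity, come out exactly right.
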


\begin{proof}
If $\ell=m$, it follows from Lemma \ref{lem:L-1}.

For $0\leq\ell<m$, by multiplying $\frac{[m-\ell]_{v^2}^![m+\ell+1]_{v^2}^!(v^2-v^{-2})}{[2m]_{v^2}^!}$, it suffices to show
\begin{align*}
   & \sum_{k=0}^m\Big(v^{4k\ell}-v^{2m+4m\ell-4k\ell}\Big)\Big(v^{2m+2\ell+2}-v^{-2m-2\ell-2}\Big)\qbinom{2m+1}{2k}_{v}
    \\
    &+\sum_{k=0}^m\Big(v^{4k\ell+4k}-v^{2m+4m\ell-4k-4k\ell}\Big)\Big(v^{2m-2\ell}-v^{-2m+2\ell}\Big)\qbinom{2m+1}{2k}_{v}=0.\end{align*}
  That is, we need to prove
  \begin{align}
  \label{8zhishu}
      &\sum_{k=0}^m\Big( v^{2\ell +2m+2+4k\ell} -v^{-2\ell-2m-2+4k\ell} -v^{4m\ell+4m+2\ell+2-4k\ell} +v^{4m\ell-2\ell-2-4k\ell}
    \\\notag
    &+v^{2m-2\ell+4k\ell+4k} -v^{-2m+2\ell+4k\ell+4k} -v^{4m\ell+4m-2\ell-4k\ell-4k}+v^{4m\ell +2\ell-4k\ell-4k}\Big)\qbinom{2m+1}{2k}_{v}=0.
\end{align}

 By using Lemma \ref{lem:L-1}, in (\ref{8zhishu}) we can replace $v^{-4k\ell}$ by $v^{4k\ell-(4m+2)\ell}$ in the third and fourth terms, and replace $v^{-4k\ell-4k}$ by $v^{4k\ell+4k-(4m+2)(\ell+1)}$ in the seventh and eighth terms. Then it suffices to prove
\begin{align*}
   & \sum_{k=0}^m  v^{4k\ell} \Big(v^{2\ell+2m+2}-v^{-2\ell-2m-2}-v^{4m+2}+v^{-4\ell-2}\Big)\qbinom{2m+1}{2k}_{v}
   \\&=-\sum_{k=0}^m  v^{4k(\ell+1)} \Big(v^{2m-2\ell}-v^{-2m+2\ell}-v^{-4\ell-2}+v^{-4m-2}\Big)\qbinom{2m+1}{2k}_{v}.
\end{align*}
That is, we need to prove
\begin{align*}
   & \sum_{k=0}^m  v^{4k\ell} \Big(v^{-\ell+m}(v^{2\ell+2m+2}-v^{-(2\ell+2m+2)})(v^{\ell-m}-v^{m-\ell})\Big)\qbinom{2m+1}{2k}_{v}
   \\&=\sum_{k=0}^m v^{4k(\ell+1)} \Big(v^{-m-1-\ell} (v^{m+1+\ell}-v^{-(m+1+\ell)})(v^{2\ell-2m}-v^{2m-2\ell})\Big)\qbinom{2m+1}{2k}_{v},
\end{align*}
which can be reformulated to be
\begin{align*}
  \sum_{k=0}^m  v^{4k\ell} v^{2m+1} (v^{m+1+\ell}+v^{-(m+1+\ell)})\qbinom{2m+1}{2k}_{v}
   =\sum_{k=0}^m v^{4k(\ell+1)} (v^{\ell-m}+v^{m-\ell})\qbinom{2m+1}{2k}_{v}.
\end{align*}
Then the result follows from Proposition \ref{propA.4}.
\end{proof}

\subsection{Proof of \eqref{eq:oddp=1b=m}}
Using similar arguments as given in \S\ref{subsec:proof-binorm-form11}, the equation \eqref{eq:oddp=1b=m} is equivalent to
\begin{equation}\label{512bm}
\begin{split}
&\sum_{k\geq0}\Big(v^{4m\ell-4k\ell+2m-2k+2\ell+1}\qbinom{2m+1}{m-\ell}_{v^2}+v^{4m\ell-4k\ell+6m-6k+2\ell+3}\qbinom{2m+1}{m-\ell-1}_{v^2}\Big)\qbinom{2m+2}{2k+1}_{v}\\
&=\sum_{k\geq0}\Big(v^{2(m-\ell+2k\ell+k)}\qbinom{2m+1}{m-\ell}_{v^2}+v^{2(-m-\ell-2+2k\ell+3k)}\qbinom{2m+1}{m-\ell-1}_{v^2}\Big)\qbinom{2m+2}{2k}_{v}.
\end{split}
\end{equation}
Now the proof is reduced to the following.
\begin{lemma}
The equation \eqref{512bm}
holds for any $0\leq \ell\leq m$.
\end{lemma}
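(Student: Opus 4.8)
The strategy is to mimic the proofs of \eqref{eqn:binorm-form11} and \eqref{eq:evenp=1b=m} in the two preceding subsections, the new wrinkle being that the two sides of \eqref{512bm} carry $q$-binomial coefficients of \emph{opposite parity} in the top entry: $\qbinom{2m+2}{2k+1}_v$ on the left and $\qbinom{2m+2}{2k}_v$ on the right. Since for the even top $2m+2$ the reflection $n\mapsto 2m+2-n$ preserves the parity of $n$, one cannot pass between the two families by a re-indexing of $k$ as was done for \eqref{eq:evenp=1b=m}; instead the plan is first to apply Pascal's rule \eqref{fkhgs} (with $a=2m+2$, $\epsilon=1$), namely $\qbinom{2m+2}{n}_v=v^{-n}\qbinom{2m+1}{n}_v+v^{2m+2-n}\qbinom{2m+1}{n-1}_v$, to every $\qbinom{2m+2}{2k+1}_v$ and $\qbinom{2m+2}{2k}_v$ occurring in \eqref{512bm}, and then to convert the resulting odd-index terms into even-index ones via the (parity-reversing, since $2m+1$ is odd) reflections $\qbinom{2m+1}{2k+1}_v=\qbinom{2m+1}{2(m-k)}_v$ and $\qbinom{2m+1}{2k-1}_v=\qbinom{2m+1}{2(m+1-k)}_v$, absorbing the attendant shifts into the powers of $v$ and re-indexing $k$. (Note that splitting \eqref{512bm} according to the two factors $\qbinom{2m+1}{m-\ell}_{v^2}$ and $\qbinom{2m+1}{m-\ell-1}_{v^2}$ and treating each piece separately fails --- the pieces are not individually identities --- so one keeps the combined form throughout.)

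After this reduction both sides of \eqref{512bm} are sums of Laurent monomials in $v$ against $\qbinom{2m+1}{2k}_v$, exactly the setting of \S\ref{subsec:proof-binorm-form11}. For $0\le\ell<m$ I would then clear the remaining $q$-binomial factors by multiplying through by
\[
\frac{[m-\ell]^!_{v^2}\,[m+\ell+2]^!_{v^2}\,(v^2-v^{-2})}{[2m+1]^!_{v^2}},
\]
which sends each $\qbinom{2m+1}{m-\ell}_{v^2}$ to $v^{2m+2\ell+4}-v^{-2m-2\ell-4}$ and each $\qbinom{2m+1}{m-\ell-1}_{v^2}$ to $v^{2m-2\ell}-v^{-2m+2\ell}$; expanding, \eqref{512bm} becomes an identity $\sum_{k}R_k(v)\,\qbinom{2m+1}{2k}_v=0$ with $R_k(v)$ a short sum of monomials. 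Folding the negative powers of $v$ through Lemma \ref{lem:L-1} (replacing $v^{-4k\ell}$ by $v^{4k\ell-(4m+2)\ell}$ and $v^{-4k\ell-4k}$ by $v^{4k\ell+4k-(4m+2)(\ell+1)}$) and then factoring, the identity should collapse onto Proposition \ref{propA.4} applied at $\ell$ and $\ell+1$, just as at the end of the two preceding subsections --- quite plausibly onto the very same instance $\sum_k v^{4k\ell}\,v^{2m+1}(v^{m+1+\ell}+v^{-(m+1+\ell)})\qbinom{2m+1}{2k}_v=\sum_k v^{4k(\ell+1)}(v^{\ell-m}+v^{m-\ell})\qbinom{2m+1}{2k}_v$. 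The boundary case $\ell=m$ is easier and needs no clearing step: the factors $\qbinom{2m+1}{m-\ell-1}_{v^2}=\qbinom{2m+1}{-1}_{v^2}$ vanish, and the symmetry $\qbinom{2m+2}{j}_v=\qbinom{2m+2}{2m+2-j}_v$ identifies \eqref{512bm} with the assertion that the even part and the odd part of $\sum_{j=0}^{2m+2}v^{(2m+1)j}\qbinom{2m+2}{j}_v$ agree, which follows from the Gaussian $q$-binomial theorem (the evaluation $\prod_{i=0}^{2m+1}(1+q^it)\big|_{t=-1}=0$ with $q=v^2$; cf. Lemma \ref{lem:L-1}).

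The step I expect to require genuine care is the bookkeeping in the first paragraph: the Pascal rewriting doubles the number of monomial summands on each side, and the reflections contribute further shifts that interact with the already sizeable exponents $v^{(2m+1)(\cdots)}$ stemming from the case $b=m$; one has to check that after all the re-indexings the surviving coefficients assemble into precisely the combination equated by Proposition \ref{propA.4}, with no leftover terms. There should be no conceptual obstacle beyond this --- only a more delicate computation than for \eqref{eqn:binorm-form11} and \eqref{eq:evenp=1b=m}, the parity mismatch being exactly what prevents a verbatim repetition of those two arguments.
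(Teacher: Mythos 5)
Your proposal is sound and uses the same combinatorial toolkit as the paper, but the two routes are organized differently. The paper's proof of this lemma never touches Pascal's rule directly: after clearing the $v^2$-binomials (arriving at \eqref{eq:b=m2-1}), it replaces $k$ by $m-k$ on the left to exploit the symmetry $\qbinom{2m+2}{2(m-k)+1}=\qbinom{2m+2}{2k+1}$, and it converts the right-hand $\qbinom{2m+2}{2k}$-sums into $\qbinom{2m+2}{2k+1}$-sums by \emph{Lemma~\ref{lem:L-2}} --- a lemma your plan does not invoke --- so that the whole thing collapses onto the clean auxiliary identity \eqref{eq:even-odd-reduced2} (Proposition~\ref{propA.5}), all still at the level of $\qbinom{2m+2}{2k+1}$. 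Only inside the proof of Proposition~\ref{propA.5} does the paper apply Pascal's rule \eqref{fkhgs} and reduce to Proposition~\ref{propA.4}. You instead propose to front-load the Pascal step, drop to $\qbinom{2m+1}{\cdot}$ at the very start, use the parity-reversing reflections of the odd top $2m+1$ to force everything onto $\qbinom{2m+1}{2k}$, and only then clear the $v^2$-binomials and reduce directly to Proposition~\ref{propA.4}. Both routes carry the same underlying identities (Pascal, $q$-binomial reflection, Lemma~\ref{lem:L-1}, the alternating-sum vanishing of Lemma~\ref{lem:qbinom1}, and Proposition~\ref{propA.4}), so your plan should succeed; the paper's packaging through Lemma~\ref{lem:L-2} and Proposition~\ref{propA.5} keeps the number of simultaneous monomial families smaller and the bookkeeping noticeably lighter, which is exactly the ``genuine care'' risk you flagged. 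One small precision: for the boundary case $\ell=m$ the evenness/oddness assertion you invoke is Lemma~\ref{lem:qbinom1} (with $p=2m+2$, $d=-(2m+1)$), not Lemma~\ref{lem:L-1}, and that is in fact what the paper cites there.
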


\begin{proof}
If $\ell=m$, it is equivalent to
$$\sum\limits_{k\geq0} v^{(2k+1)(2m+1)}\qbinom{2m+2}{2k+1}_{v}=\sum\limits_{k\geq0} v^{2k(2m+1)}\qbinom{2m+2}{2k}_{v},$$ where we have replaced $k$ by $m-k$ in the first sum in (\ref{512bm}). Then it follows from Lemma \ref{lem:qbinom1}.

For $0\leq\ell<m$, we need to prove
\begin{align}
\label{eq:b=m2-1}
\begin{split}
&\sum_{k\geq0} \Big(v^{4m\ell-4k\ell+2m-2k+2\ell+1}(v^{2(m+\ell+2)}-v^{-2(m+\ell+2)}) \\
&\qquad+v^{4m\ell-4k\ell+6m-6k+2\ell+3}(v^{2(m-\ell)}-v^{-2(m-\ell)})\Big) \qbinom{2m+2}{2k+1}_{v}\\
=&\sum_{k\geq0} \Big(v^{2(m-\ell+2k\ell+k)}(v^{2(m+\ell+2)}-v^{-2(m+\ell+2)}) \\
&\qquad+v^{2(-m-\ell-2+2k\ell+3k)}(v^{2(m-\ell)}-v^{-2(m-\ell)})\Big)\qbinom{2m+2}{2k}_{v}.
\end{split}
\end{align}
Via replacing $k$ by $m-k$, the left-hand side of \eqref{eq:b=m2-1} equals to
\begin{align*}
\sum_{k\geq0} \Big(v^{2m+4\ell+5+2k(2\ell+1)}-v^{-2m-3+2k(2\ell+1)}+v^{2m+3+2k(2\ell+3)}-v^{-2m+4\ell+3+2k(2\ell+3)}\Big) \qbinom{2m+2}{2k+1}_{v}.
\end{align*}
By using Lemma \ref{lem:L-2}, the right-hand side of \eqref{eq:b=m2-1} equals to
\begin{align*}
&\sum_{k\geq0} \Big(v^{4m+4+2k(2\ell+1)}-v^{-4\ell-4+2k(2\ell+1)}+v^{-4\ell-4+2k(2\ell+3)}-v^{-4m-4+2k(2\ell+3)}\Big)\qbinom{2m+2}{2k}_{v}\\
        &=\sum_{k\geq0} \Big(v^{4m+2\ell+5+2k(2\ell+1)}-v^{2k(2\ell+1)-2\ell-3}+v^{2k(2\ell+3)-2\ell-1}-v^{2k(2\ell+3)-4m-1+2\ell}\Big)\qbinom{2m+2}{2k+1}_{v}.
\end{align*}
Thus, we need to prove
\begin{align*}
      &\sum_{k\geq0}v^{2k(2\ell+3)} \Big(v^{2m+3}-v^{-2m+4\ell+3}
       -v^{-2\ell-1}+v^{-4m-1+2\ell}\Big) \qbinom{2m+2}{2k+1}_{v}\\
       &=\sum_{k\geq0} v^{2k(2\ell+1)}\Big(v^{4m+2\ell+5}-v^{-2\ell-3}-v^{2m+4\ell+5}+v^{-2m-3}
       \Big)\qbinom{2m+2}{2k+1}_{v}.
    \end{align*}
That is,
   \begin{align*}
       &\sum_{k\geq0}v^{2k(2\ell+3)} v^{-m+\ell+1}\Big((v^{m+\ell+2}-v^{-(m+\ell+2)})(v^{2(m-\ell)}-v^{-2(m-\ell)})
       \Big) \qbinom{2m+2}{2k+1}_{v}\\
       &=\sum_{k\geq0} v^{2k(2\ell+1)}v^{m+\ell+1}\Big((v^{2m+2\ell+4}-v^{-(2m+2\ell+4)})(v^{m-\ell}-v^{-(m-\ell)})
       \Big)\qbinom{2m+2}{2k+1}_{v}.
    \end{align*}
 Equivalently, we need to prove
    \begin{align*}
      \sum_{k\geq0}v^{2k(2\ell+3)}  (v^{m-\ell}+v^{-(m-\ell)})\qbinom{2m+2}{2k+1}_{v}
       =\sum_{k\geq0} v^{2k(2\ell+1)+2m}(v^{m+\ell+2}+v^{-(m+\ell+2)})\qbinom{2m+2}{2k+1}_{v},
    \end{align*}
    which follows from Proposition \ref{propA.5}.
\end{proof}

\section{Completing the proof of main Theorem}
\label{sec:proof-Main}

In this section, let us prove that $\varphi_{\mathbf{i}}$ is an algebra homomorphism for an arbitrary valued quiver $Q$ associated to $C$, and finish the proof of Theorem \ref{mainthm}.

Without loss of generality, let us assume that $\tUi$ is of rank two, i.e., the quiver $Q$ has only two vertices $i\neq j$, moreover, assume $\lr{e_i,e_j}_Q\geq \lr{e_j,e_i}_Q$. Let $\lr{e_i,e_j}_Q=-td_i=-sd_j$ for some non-negative integers $s,t$. 
Set $$C'=\begin{bmatrix}
    2&c_{ij}+2t
    \\
    c_{ji}+2s& 2
\end{bmatrix}.$$ Then $C'$ is also a symmetrizable GCM with symmetrizer ${\rm diag}(d_i,d_j)$.
Let us associate with $C'$ an acyclic valued quiver $Q'$ such that $$\lr{e_i,e_j}_{Q'}=0~~\text{and}~~\lr{e_j,e_i}_{Q'}=d_ic_{ij}+2td_i.$$

Let $\mathbf{i}=(i_1i_2\cdots i_r)$. For any $1\leq k,l\leq r$ with $i_l=i, i_k=j$, we have in $\mathcal{T}_\mathbf{i}(Q')$
\begin{align*}
x_lx_k=v^{\lr{e_j,e_i}_{Q'}}x_kx_l=v^{d_ic_{ij}+2td_i}x_kx_l.
\end{align*}
While in $\mathcal{T}_\mathbf{i}(Q)$,
\begin{align*}
x_lx_k=v^{\lr{e_j,e_i}_{Q}-\lr{e_i,e_j}_Q}x_kx_l=v^{(e_i,e_j)_Q-2\lr{e_i,e_j}_Q}x_kx_l=v^{d_ic_{ij}+2td_i}x_kx_l.
\end{align*}
Thus, it is easy to see that the map $\gamma: \mathcal{T}_\mathbf{i}(Q')\rightarrow \mathcal{T}_\mathbf{i}(Q)$ defined on generators by $x_k^{\pm}\mapsto x_k^{\pm}$ is an isomorphism of algebras.

Let ${}'\tUi$ be the universal $\imath$quantum group of split type associated to $C'$. Since $\lr{e_i,e_j}_{Q'}=0$, by Proposition \ref{teshuqx}, we have an algebra homomorphism $\varphi'_{\mathbf{i}}:{}'\tUi\rightarrow\mathcal{T}_\mathbf{i}(Q')$.
By Proposition \ref{prop:SLinduct}, we have the higher order $\imath$Serre relations in ${}'\tUi$
\begin{align*}
\sum_{s+s'=1-c'_{ij}+2t} (-1)^s B^{(s)}_{i, \overline{p_i}} B_j B_{i,\overline{c'_{ij}}+\overline{p}_i}^{(s')}=0\quad\text{and}\quad\sum_{s+s'=1-c'_{ji}+2s} (-1)^s B^{(s)}_{j, \overline{p_j}} B_i B_{j,\overline{c'_{ji}}+\overline{p}_j}^{(s')}=0,
\end{align*}
where $c'_{ij}=c_{ij}+2t$, $c'_{ji}=c_{ji}+2s$. It follows that the relations \eqref{iserrerelation} for $\tUi$ hold in ${}'\tUi$.  Thus, the map $\iota:\tUi\rightarrow{}'\tUi$ defined on generators by $B_i\mapsto B_i$ and $\tk_i\mapsto \tk_i$ is an algebra homomorphism.
Using the commutative diagram
$$\xymatrix{\tUi\ar[r]^-{\varphi_\mathbf{i}}\ar[d]_-{\iota}&\mathcal{T}_\mathbf{i}(Q)\\
{}'\tUi\ar[r]^-{\varphi'_{\mathbf{i}}}&\mathcal{T}_\mathbf{i}(Q')\ar[u]^-{\gamma}}$$
we obtain that $\varphi_\mathbf{i}$ is an algebra homomorphism. Therefore, the proof of Theorem \ref{mainthm} is completed.

As a corollary of Theorem \ref{mainthm} and Propositions \ref{prop3.5}, \ref{prop:reduce-even-even}--\ref{prop4.2.1}, we have the following.
\begin{proposition}
    \label{cor:identities}
The combinatorial identities \eqref{ou0}, \eqref{evenp=1} hold for $0\leq |b|,\ell\leq m$, and \eqref{oddbdym} holds for $-(m+1)\leq b\leq m, 0\leq \ell\leq m$.
\end{proposition}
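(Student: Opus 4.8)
The plan is to deduce these identities \emph{a posteriori} from the now-established Theorem \ref{mainthm}, by running the reductions of Sections \ref{sec:i-divided-QT}--\ref{sec:iSerre} in reverse. Indeed, by Theorem \ref{mainthm} the map $\varphi_\mathbf{i}$ is an algebra homomorphism for \emph{every} symmetrizable GCM, \emph{every} associated valued quiver $Q$, and \emph{every} word $\mathbf{i}$; by Proposition \ref{prop3.5} this is equivalent to the vanishing \eqref{yzgx} holding in $\ct_\mathbf{i}(Q)$ for all $i\neq j\in\I$ and all $\ov{p}_i\in\Z/2\Z$. On the other hand, Propositions \ref{prop:reduce-even-even}, \ref{prop:even and p=1} and \ref{prop4.2.1} identify \eqref{yzgx}, for a given such datum, with the combinatorial identity \eqref{ou0}, \eqref{evenp=1} or \eqref{oddbdym} for the value of $b$ read off from the Euler form of $Q$ and all $0\leq\ell\leq m$. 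Hence it only remains to exhibit, for each $b$ in the ranges asserted in the statement, a GCM and a valued quiver realizing that value of $b$; since every difficult step --- the reduction to these identities, their verification for the extreme values $b=\pm m$, $m$, $-(m+1)$ in Proposition \ref{teshuqx}, and the Serre--Lusztig bootstrap of Section \ref{sec:proof-Main} --- has already been carried out, there is essentially no obstacle at this point.

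For \eqref{ou0} and \eqref{evenp=1}, fix integers $m\geq0$ and $b$ with $0\leq|b|\leq m$. Take the rank-two GCM $C=\left(\begin{smallmatrix}2 & -2m\\ -2m & 2\end{smallmatrix}\right)$ with symmetrizer $D=\diag(1,1)$ (so $v_i=v$), and let $Q$ be the valued quiver on $\{i,j\}$ obtained by orienting $m-b$ of the $2m$ edges of $\Gamma(C)$ from $i$ to $j$ and the remaining $m+b$ edges from $j$ to $i$; both counts are non-negative \emph{precisely} when $|b|\leq m$, which is the source of the range in the statement. Then $\n_{ij}=\lr{e_i,e_j}_Q=-(m-b)$ and $\n_{ji}=-(m+b)$, so in the notation of \S\ref{subsec:even-even} we are in the case $c_{ij}=-2m$ with exactly this $b$. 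Feeding this datum, together with the validity of \eqref{yzgx} from Theorem \ref{mainthm}, into Proposition \ref{prop:reduce-even-even} (for $\ov{p}_i=0$) and into Proposition \ref{prop:even and p=1} (for $\ov{p}_i=1$) yields \eqref{ou0} and \eqref{evenp=1} for this $(m,b)$ and all $0\leq\ell\leq m$; letting $m$ and $b$ range over the stated set completes these two cases.

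For \eqref{oddbdym}, fix $m\geq0$ and $b$ with $-(m+1)\leq b\leq m$, take $C=\left(\begin{smallmatrix}2 & -(2m+1)\\ -(2m+1) & 2\end{smallmatrix}\right)$ with $D=\diag(1,1)$, and orient $m-b$ of the $2m+1$ edges of $\Gamma(C)$ from $i$ to $j$ and the remaining $m+b+1$ edges from $j$ to $i$ --- non-negativity of these counts is exactly the constraint $-(m+1)\leq b\leq m$. This gives $\n_{ij}=-(m-b)$ and $\n_{ji}=-(m+b+1)$, matching the set-up of \S\ref{subsec:odd-even} with the given $b$, so that Proposition \ref{prop4.2.1} together with Theorem \ref{mainthm} delivers \eqref{oddbdym}. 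One small point needs care: the coefficient-comparison step inside Propositions \ref{prop:reduce-even-even}--\ref{prop4.2.1} requires the monomials $X_i^{m+\ell+1}Y_i^{m-\ell}X_j$ for $-m\leq\ell\leq m$ to be $\Q(v)$-linearly independent in $\ct_\mathbf{i}(Q)$, which holds once $\mathbf{i}$ is chosen to have sufficiently many (say at least $2m+1$) letters equal to $i$ besides one letter $j$; since the target identities make no reference to $\mathbf{i}$, this choice is free. The only genuine thing to verify is then the elementary arithmetic of the ranges --- that the number of oppositely oriented arrows one must introduce stays non-negative exactly in the asserted range of $b$ --- which also serves as a pleasant consistency check on the computations of \S\ref{sec:iSerre}.
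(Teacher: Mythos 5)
Your proposal is correct and follows essentially the same route as the paper, which simply declares Proposition~\ref{cor:identities} to be a corollary of Theorem~\ref{mainthm} together with Propositions~\ref{prop3.5}, \ref{prop:reduce-even-even}--\ref{prop4.2.1}. The one thing you usefully make explicit, which the paper leaves implicit, is that every $b$ in the asserted ranges is actually \emph{realized} by some valued quiver: your rank-two simply-laced Cartan data with $m-b$ arrows one way and $m+b$ (resp.\ $m+b+1$) the other way do exactly that, and the non-negativity of these arrow counts recovers the stated bounds on $b$. Your side remark about needing at least $2m+1$ letters equal to $i$ in $\mathbf{i}$ to ensure linear independence of the monomials $X_i^{m+\ell+1}Y_i^{m-\ell}X_j$ is more cautious than necessary: since all $x_k$ with $i_k=i$ commute with one another, even a single occurrence of $i$ gives $X_i^{m+\ell+1}Y_i^{m-\ell}=x_k^{2\ell+1}$, and distinct odd powers of $x_k$ are already linearly independent; but the over-provisioning does no harm.
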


\appendix

\section{Some combinatorial identities}
\label{sec:Comb-identities}

In this section, we provide several combinatorial identities which are used in Section \ref{sec:without2-loop}.

\begin{lemma}[\text{cf. \cite[1.3.1(c)]{Lus93}; see also \cite[Lemma 5.4]{LW20a}}]
\label{lem:qbinom1}
Given a positive integer $p$, for any $d\in \Z$, if $|d| \le p-1$ and $d\equiv p-1\pmod 2$, then
$$\sum_{k=0}^{p}(-1)^kv^{-kd}\qbinom{p}{k}=0.$$
\end{lemma}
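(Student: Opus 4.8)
The plan is to recognize the alternating sum as a specialization of the quantum binomial theorem and then exhibit a vanishing factor. First I would record the product expansion
\[
\prod_{j=0}^{p-1}\bigl(1+v^{2j}x\bigr)=\sum_{k=0}^{p}v^{k(p-1)}\qbinom{p}{k}x^{k}
\]
for an indeterminate $x$ (this is essentially \cite[1.3.1]{Lus93}). It follows by a one-line induction on $p$: multiplying the case $p-1$ by the factor $1+v^{2(p-1)}x$ and comparing coefficients of $x^{k}$, the identity to be verified reduces to $v^{-k}\qbinom{p-1}{k}+v^{p-k}\qbinom{p-1}{k-1}=\qbinom{p}{k}$, which is exactly \eqref{fkhgs} with $\epsilon=1$, $a=p$, $t=k$; alternatively it drops out of \eqref{2.2}.

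Next I would substitute $x=-v^{-d-(p-1)}$. Since then $v^{k(p-1)}x^{k}=(-1)^{k}v^{-kd}$, the right-hand side of the product expansion becomes precisely $\sum_{k=0}^{p}(-1)^{k}v^{-kd}\qbinom{p}{k}$, the sum in the statement, while the left-hand side becomes
\[
\prod_{j=0}^{p-1}\bigl(1-v^{\,2j-d-(p-1)}\bigr).
\]
It then remains to see that one of these $p$ factors vanishes. The $j$-th factor is zero exactly when $2j-d-(p-1)=0$, that is $j=\tfrac{d+p-1}{2}$; the hypothesis $d\equiv p-1\pmod 2$ makes this an integer, and $|d|\le p-1$ forces $0\le j\le p-1$, so the corresponding factor equals $1-v^{0}=0$ and the whole product vanishes. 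This proves the lemma.

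I do not anticipate a serious obstacle; the only point needing care is the exponent bookkeeping, both in the product formula and in the substitution. If one prefers a wholly self-contained argument, the same hypotheses can be fed into a direct induction on $p$: expanding $\qbinom{p}{k}$ by \eqref{fkhgs} and reindexing collapses $\sum_{k}(-1)^{k}v^{-kd}\qbinom{p}{k}$ to $(1-v^{p-1-d})\sum_{k}(-1)^{k}v^{-k(d+1)}\qbinom{p-1}{k}$; when $d=p-1$ the prefactor is $0$, and when $d<p-1$ one has $d\le p-3$ (same parity as $p-1$), whence $|d+1|\le p-2$ and $d+1\equiv p-2\pmod 2$, so the inductive hypothesis applies, the base case $p=1$ being the identity $1-v^{-d}=0$, which holds since the hypothesis forces $d=0$.
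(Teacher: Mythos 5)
The paper does not prove Lemma~\ref{lem:qbinom1}; it cites \cite[1.3.1(c)]{Lus93} and \cite[Lemma 5.4]{LW20a}, so there is no in-text proof to compare your argument against. Your proof is correct and is in fact the standard route (and, as far as I recall, essentially the one in Lusztig's book): establish the balanced $q$-binomial product formula
\[
\prod_{j=0}^{p-1}\bigl(1+v^{2j}x\bigr)=\sum_{k=0}^{p}v^{k(p-1)}\qbinom{p}{k}x^{k},
\]
then specialize $x=-v^{-d-(p-1)}$ so that $v^{k(p-1)}x^{k}=(-1)^{k}v^{-kd}$, and observe that the parity condition $d\equiv p-1\pmod 2$ and the bound $|d|\le p-1$ guarantee that $j=\tfrac{d+p-1}{2}$ is an integer in $\{0,\dots,p-1\}$, making the corresponding factor $1-v^{0}$ vanish. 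All the exponent bookkeeping checks out: the inductive verification of the product formula is exactly \eqref{fkhgs} with $\epsilon=1$, $a=p$, $t=k$, and the substitution indeed yields the stated sum. Your alternative self-contained induction is also correct: the factorization
\[
\sum_{k}(-1)^{k}v^{-kd}\qbinom{p}{k}=(1-v^{p-1-d})\sum_{k}(-1)^{k}v^{-k(d+1)}\qbinom{p-1}{k}
\]
follows from \eqref{fkhgs} plus a reindex, the case $d=p-1$ is killed by the prefactor, and for $d\le p-3$ the pair $(p-1,d+1)$ again satisfies the hypotheses, with the base case $p=1$ forcing $d=0$. Both versions are sound; the first is shorter and conceptually cleaner, the second has the advantage of invoking only \eqref{fkhgs}.
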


    \begin{lemma}
    \label{lem:L-2}
       Given a positive integer $p$, for any $d\in \Z$, the following statements hold:
       \begin{itemize}
           \item [(1)] if $|d| \le2p$ and $2\mid d$, then
        \begin{align}\label{5.2}
            \sum_{k=0}^{p}v^{-2kd}\qbinom{2p+1}{2k}=\sum_{k=0}^{p} v^{-(2k+1)d}\qbinom{2p+1}{2k+1}.
        \end{align}
        \item [(2)] if $|d| \le2p+1$ and $2\nmid d$, then
        \begin{align}\label{lem:L-2}
            \sum_{k=0}^{p+1}v^{-2kd}\qbinom{2p+2}{2k}=\sum_{k=0}^{p} v^{-(2k+1)d}\qbinom{2p+2}{2k+1}.
        \end{align}
       \end{itemize}
    \end{lemma}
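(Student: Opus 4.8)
\textbf{Plan for proving Lemma \ref{lem:L-2}.}
The plan is to reduce both parts to the classical vanishing identity in Lemma \ref{lem:qbinom1}, namely $\sum_{k=0}^{p}(-1)^kv^{-kd}\qbinom{p}{k}=0$ whenever $|d|\le p-1$ and $d\equiv p-1\pmod 2$. The key observation is that the even-indexed and odd-indexed sums of Gaussian binomial coefficients can be separated by the standard alternating-sum trick. First I would record the two basic companion identities obtained by running the index $k$ over $0,\dots,N$ (with $N=2p+1$ in part (1), $N=2p+2$ in part (2)): the ``plus'' identity $\sum_{k=0}^{N}v^{-kd}\qbinom{N}{k}$ (which is itself a specialization of the $q$-binomial theorem, or can be left uncomputed) and the ``minus'' identity $\sum_{k=0}^{N}(-1)^k v^{-kd}\qbinom{N}{k}=0$, the latter being exactly Lemma \ref{lem:qbinom1} applied with $p$ replaced by $N$. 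Adding and subtracting these isolates the even part $\sum_{k}v^{-2kd}\qbinom{N}{2k}$ and the odd part $\sum_{k}v^{-(2k+1)d}\qbinom{N}{2k+1}$, and the claim is precisely that these two halves coincide, which is equivalent to the vanishing of the alternating sum.

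Concretely, for part (1) set $N=2p+1$. The hypothesis $|d|\le 2p$ together with $2\mid d$ gives $|d|\le 2p = N-1$ and $d\equiv 0\equiv N-1\pmod 2$, so Lemma \ref{lem:qbinom1} (with $p\rightsquigarrow N$) yields $\sum_{k=0}^{2p+1}(-1)^k v^{-kd}\qbinom{2p+1}{k}=0$. Splitting this sum according to the parity of $k$ gives
\begin{align*}
\sum_{k=0}^{p}v^{-2kd}\qbinom{2p+1}{2k}-\sum_{k=0}^{p}v^{-(2k+1)d}\qbinom{2p+1}{2k+1}=0,
\end{align*}
which is \eqref{5.2}. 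For part (2) set $N=2p+2$. The hypothesis $|d|\le 2p+1$ with $2\nmid d$ gives $|d|\le 2p+1=N-1$ and $d\equiv 1\equiv N-1\pmod 2$, so Lemma \ref{lem:qbinom1} again applies and $\sum_{k=0}^{2p+2}(-1)^k v^{-kd}\qbinom{2p+2}{k}=0$; splitting by parity of $k$ (the even part runs $k=0,\dots,p+1$, the odd part $k=0,\dots,p$) gives \eqref{lem:L-2}. One should double-check the edge behaviour of the hypothesis: in part (1) the parity condition on $d$ forces the bound $|d|\le 2p$ to be automatically consistent with $|d|\le N-1=2p$, and similarly in part (2); these are the only places where the stated ranges matter, so it is worth stating them explicitly.

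The main obstacle here is essentially bookkeeping rather than mathematics: one must make sure that the congruence and inequality hypotheses in Lemma \ref{lem:qbinom1} are met after the substitution $p\rightsquigarrow 2p+1$ (resp.\ $2p+2$), and that the parity decomposition of the index set is carried out with the correct upper limits, since in part (2) the even and odd halves have different lengths. No nontrivial $q$-binomial manipulation is needed beyond invoking Lemma \ref{lem:qbinom1}; the entire content is the observation that ``even part equals odd part'' is the same statement as ``alternating sum vanishes.'' I would therefore keep the write-up short, essentially the two displays above plus the verification of hypotheses.
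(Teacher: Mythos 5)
Your proof is correct and is essentially identical to the paper's: both apply Lemma \ref{lem:qbinom1} with $p$ replaced by $2p+1$ (resp.\ $2p+2$) and split the resulting alternating sum by parity of the index. The only difference is that you spell out the hypothesis verification and the parity split explicitly, whereas the paper leaves these as immediate.
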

\begin{proof}
We only prove (1), since (2) can be proved similarly. By Lemma \ref{lem:qbinom1}, we have
$$\sum_{k=0}^{2p+1}(-1)^kv^{-kd}\qbinom{2p+1}{k}=0.$$
Then it follows that \eqref{5.2} holds.
\end{proof}

    \begin{lemma}
    \label{lem:L-1}
       Given a positive integer $p$, for any $d\in \Z$, if $|d| \le2p$ and $2\mid d$, then
        \begin{align}
        \sum_{k=0}^p v^{-2kd} \qbinom{2p+1}{2k}=\sum_{k=0}^p v^{2kd-(2p+1)d} \qbinom{2p+1}{2k}.
        \end{align}
    \end{lemma}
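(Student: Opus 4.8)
The plan is to deduce the identity directly from Lemma~\ref{lem:L-2}(1) together with the symmetry $\qbinom{n}{j}=\qbinom{n}{n-j}$ of the Gaussian binomial coefficients, by a single reindexing of the summation variable.

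First I would invoke Lemma~\ref{lem:L-2}(1): since the hypotheses $|d|\le 2p$ and $2\mid d$ are precisely those assumed here, it yields
\[
\sum_{k=0}^{p}v^{-2kd}\qbinom{2p+1}{2k}=\sum_{k=0}^{p}v^{-(2k+1)d}\qbinom{2p+1}{2k+1}.
\]
Thus it suffices to show that the right-hand side equals $\sum_{k=0}^{p}v^{2kd-(2p+1)d}\qbinom{2p+1}{2k}$, which is the right-hand side of the claimed identity.

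Next I would substitute $k\mapsto p-k$ in the sum $\sum_{k=0}^{p}v^{-(2k+1)d}\qbinom{2p+1}{2k+1}$. Since $2(p-k)+1=(2p+1)-2k$, the symmetry $\qbinom{2p+1}{(2p+1)-2k}=\qbinom{2p+1}{2k}$ converts the binomial factor, while the exponent becomes $-(2(p-k)+1)d=-(2p+1)d+2kd=2kd-(2p+1)d$. Hence
\[
\sum_{k=0}^{p}v^{-(2k+1)d}\qbinom{2p+1}{2k+1}=\sum_{k=0}^{p}v^{2kd-(2p+1)d}\qbinom{2p+1}{2k},
\]
which, combined with the previous display, gives the lemma. (Alternatively one may bypass Lemma~\ref{lem:L-2} altogether and apply Lemma~\ref{lem:qbinom1} with $2p+1$ in place of $p$, splitting the resulting alternating sum into its even- and odd-index parts, and then carry out the same change of index.)

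There is no substantial obstacle: the argument is an application of an already-established identity followed by a routine change of summation index. The only point demanding care is to track the exponent of $v$ and the index range correctly under the substitution $k\mapsto p-k$, together with the use of symmetry to turn the odd-index binomial coefficient $\qbinom{2p+1}{2(p-k)+1}$ back into the even-index one $\qbinom{2p+1}{2k}$.
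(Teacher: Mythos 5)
Your proposal is correct and matches the paper's own argument: the paper likewise starts from Lemma~\ref{lem:L-2}(1), reindexes the right-hand side via $k\mapsto p-k$, and applies the symmetry $\qbinom{2p+1}{2(p-k)+1}=\qbinom{2p+1}{2k}$ to obtain the claimed identity.
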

\begin{proof}
Via replacing $k$ by $p-k$, the right-hand side of \eqref{5.2} can be reformulated as
\begin{align*}
           \sum_{k=0}^{p}v^{-(2k+1)d}\qbinom{2p+1}{2k+1}=\sum_{k=0}^{p}v^{-(2(p-k)+1)d}\qbinom{2p+1}{2(p-k)+1}=\sum_{k=0}^{p}v^{2kd-(2p+1)d}\qbinom{2p+1}{2k}.
        \end{align*}
        Then we finish the proof.
\end{proof}

\begin{proposition}
\label{propA.4}
For any $0\leq \ell<m$, we have
\begin{align}
\label{eq:odd-even-reduced}
   & \sum_{k=0}^m v^{4k(\ell+1)} (v^{m-\ell}+v^{-(m-\ell)})\qbinom{2m+1}{2k}=\sum_{k=0}^m  v^{4k\ell+2m+1}(v^{m+1+\ell}+v^{-(m+1+\ell)})\qbinom{2m+1}{2k}
   \\\label{2m+1 and 2k+1}
     & \sum_{k=0}^m v^{4k(\ell+1)} (v^{m-\ell}+v^{-(m-\ell)})\qbinom{2m+1}{2k+1}=\sum_{k=0}^m  v^{4k\ell+2m-1}(v^{m+1+\ell}+v^{-(m+1+\ell)})\qbinom{2m+1}{2k+1}.
\end{align}
\end{proposition}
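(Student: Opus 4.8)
I would prove both identities in parallel by first evaluating the $q$-binomial sums occurring in them in closed product form. The key input is the expansion
\begin{equation}
\label{prodform}
\prod_{r=-m}^{m}(1+v^{2r}z)=\sum_{j=0}^{2m+1}\qbinom{2m+1}{j}z^{j},
\end{equation}
which follows from \eqref{fkhgs} by an easy induction on $m$. Putting $z=v^{c}$ and $z=-v^{c}$ for $c\in\Z$ and adding, resp.\ subtracting, the resulting equalities extracts the even- and odd-index parts:
\begin{align*}
\sum_{k=0}^{m}\qbinom{2m+1}{2k}v^{2kc}&=\frac12\Big(\prod_{r=-m}^{m}(1+v^{2r+c})+\prod_{r=-m}^{m}(1-v^{2r+c})\Big),\\
\sum_{k=0}^{m}\qbinom{2m+1}{2k+1}v^{(2k+1)c}&=\frac12\Big(\prod_{r=-m}^{m}(1+v^{2r+c})-\prod_{r=-m}^{m}(1-v^{2r+c})\Big).
\end{align*}

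The crux is that when $c=2e$ with $0\le e\le m$ the second product contains (at $r=-e$) the factor $1-v^{0}=0$ and hence vanishes, while the first one factors: writing $1+v^{2j}=v^{j}(v^{j}+v^{-j})$ and using $\sum_{j=e-m}^{e+m}j=(2m+1)e$ yields
\begin{equation}
\label{closedeven}
\sum_{k=0}^{m}\qbinom{2m+1}{2k}v^{4ke}=\sum_{k=0}^{m}\qbinom{2m+1}{2k+1}v^{2(2k+1)e}=v^{(2m+1)e}\prod_{j=1}^{m-e}(v^{j}+v^{-j})\prod_{j=1}^{m+e}(v^{j}+v^{-j}).
\end{equation}

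To conclude \eqref{eq:odd-even-reduced}, pull the factor $v^{m-\ell}+v^{-(m-\ell)}$ out of the left-hand side and $v^{2m+1}\bigl(v^{m+1+\ell}+v^{-(m+1+\ell)}\bigr)$ out of the right-hand side; the two remaining sums are exactly the cases $e=\ell+1$ and $e=\ell$ of \eqref{closedeven}, both admissible since $0\le\ell<m$ forces $|\ell|,|\ell+1|\le m$. Substituting \eqref{closedeven} and cancelling the common power $v^{(2m+1)(\ell+1)}$ reduces the identity to the two trivialities $(v^{m-\ell}+v^{-(m-\ell)})\prod_{j=1}^{m-\ell-1}(v^{j}+v^{-j})=\prod_{j=1}^{m-\ell}(v^{j}+v^{-j})$ and $(v^{m+1+\ell}+v^{-(m+1+\ell)})\prod_{j=1}^{m+\ell}(v^{j}+v^{-j})=\prod_{j=1}^{m+\ell+1}(v^{j}+v^{-j})$. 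For \eqref{2m+1 and 2k+1} the only additional step is the exponent rewriting $4k(\ell+1)=(2k+1)(2\ell+2)-(2\ell+2)$ and $4k\ell=(2k+1)(2\ell)-2\ell$, which turns the two sums into the odd-index forms of \eqref{closedeven} with $e=\ell+1$ and $e=\ell$ up to the prefactors $v^{-(2\ell+2)}$ and $v^{-2\ell}$, after which the computation collapses in the same way.

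I expect the only genuinely substantive step to be the observation opening the second paragraph: once one notices that for even $c$ in the relevant range the ``minus'' product degenerates---so that the even- and odd-index sums coincide and are each given by an explicit product---everything else is routine exponent bookkeeping. The one place needing a little care is that $\prod_{j=1}^{m-\ell-1}$ is meaningful precisely because $\ell<m$, which is exactly the hypothesis of the proposition (the case $\ell=m$, needed elsewhere, is handled separately). Alternatively one could bypass \eqref{closedeven} by using Lemma~\ref{lem:qbinom1} to interchange even- and odd-indexed sums, as in the proof of Lemma~\ref{lem:L-1}, but the direct evaluation above disposes of both identities in one stroke.
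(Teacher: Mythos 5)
Your proof is correct, and it takes a genuinely different route from the paper's. The paper applies the Pascal-type recursion \eqref{fkhgs} to split $\qbinom{2m+1}{2k}$ into $\qbinom{2m}{2k}$ and $\qbinom{2m}{2k-1}$, performs some bookkeeping on the resulting sums, and reduces \eqref{eq:odd-even-reduced} to the vanishing $\sum_{k=0}^{2m}(-1)^kv^{k(2\ell+1)}\qbinom{2m}{k}=0$, which is Lemma~\ref{lem:qbinom1}; equation \eqref{2m+1 and 2k+1} is then deduced from \eqref{eq:odd-even-reduced} via $k\mapsto m-k$, $v\mapsto v^{-1}$. You instead start from the centred $q$-binomial product $\prod_{r=-m}^{m}(1+v^{2r}z)=\sum_{j}\qbinom{2m+1}{j}z^{j}$ (which follows from Lemma~\ref{Lus} by induction), separate even and odd parts via $z=\pm v^{c}$, observe that for even $c=2e$ with $0\le e\le m$ the ``minus'' product degenerates, and hence obtain the closed product form \eqref{closedeven}. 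This actually evaluates the relevant sums rather than merely relating them, and it subsumes Lemmas~\ref{lem:qbinom1}, \ref{lem:L-2} and \ref{lem:L-1} as special cases. The calculations that finish the argument are correct: the exponent shifts $v^{(2m+1)(\ell+1)}=v^{2m+1}v^{(2m+1)\ell}$ and the absorption of the prefactors $v^{m-\ell}+v^{-(m-\ell)}$, $v^{m+\ell+1}+v^{-(m+\ell+1)}$ into the telescoping products behave exactly as you say, and the constraint $\ell<m$ is precisely what ensures $e=\ell+1\le m$ so that the minus-product vanishing applies. Your version is arguably cleaner and more unified; the paper's version is more elementary in that it only invokes \eqref{fkhgs} and Lemma~\ref{lem:qbinom1}, which it reuses elsewhere in the appendix.
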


\begin{proof}
 By \eqref{fkhgs}, we have
  \begin{align*}
   {\rm LHS}\eqref{eq:odd-even-reduced}&=\sum_{k=0}^m v^{4k(\ell+1)}(v^{m-\ell}-v^{-(m-\ell)})(v^{-2k}\qbinom{2m}{2k}+v^{-2k+(2m+1)}\qbinom{2m}{2k-1})\\
         &=\sum_{k=0}^m v^{4k\ell+2k} (v^{m-\ell}-v^{-(m-\ell)})\qbinom{2m}{2k}+\sum_{k=0}^m v^{4k\ell+2k} (v^{m+\ell+1}+v^{3m+1-\ell})\qbinom{2m}{2k-1}
 \end{align*}
 and
 \begin{align*}
   {\rm RHS}\eqref{eq:odd-even-reduced}&=\sum_{k=0}^m v^{4k\ell} v^{2m+1}(v^{m+\ell+1}+v^{-(m+\ell+1)}) (v^{2k}\qbinom{2m}{2k}+v^{2k-(2m+1)}\qbinom{2m}{2k-1})\\
         &=\sum_{k=0}^m v^{4k\ell+2k} (v^{3m+2+\ell}+v^{m-\ell})\qbinom{2m}{2k}+\sum_{k=0}^m v^{4k\ell+2k} (v^{m+\ell+1}+v^{-(m+\ell+1)})\qbinom{2m}{2k-1}.
 \end{align*}
 Hence it suffices to prove
  \begin{align*}
   \sum_{k=0}^m v^{4k\ell+2k} (v^{3m+\ell+2}-v^{\ell-m})\qbinom{2m}{2k}
        = \sum_{k=0}^m v^{4k\ell+2k} (v^{3m+1-\ell}-v^{-(m+\ell+1)})\qbinom{2m}{2k-1}.
 \end{align*}
 That is,
  \begin{align*}
   \sum_{k=0}^m v^{2k(2\ell+1)}\qbinom{2m}{2k}
        = \sum_{k=0}^m v^{2k(2\ell+1)} v^{-(2\ell+1)}\qbinom{2m}{2k-1}.
 \end{align*}
 Equivalently, we need to prove
  \begin{align*}
   \sum_{k=0}^m v^{2k(2\ell+1)}\qbinom{2m}{2k}
        = \sum_{k=0}^m v^{(2k-1)(2\ell+1)}\qbinom{2m}{2k-1},
 \end{align*}
which can be reorganized to be
 \begin{align*}
   \sum_{k=0}^{2m}(-1)^k v^{k(2\ell+1)}\qbinom{2m}{k}
        = 0,
 \end{align*}
 and then it follows from Lemma \ref{lem:qbinom1}.

 The equation \eqref{2m+1 and 2k+1} follows from  \eqref{eq:odd-even-reduced} via replacing $k$ by $m-k$ and then replacing $v$ by $v^{-1}$.
\end{proof}

\begin{proposition}
\label{propA.5}
For any $0\leq \ell<m$, we have
 \begin{align}
      \label{eq:even-odd-reduced2}
      \sum_{k\geq0}v^{2k(2\ell+3)}  (v^{m-\ell}+v^{-(m-\ell)})\qbinom{2m+2}{2k+1}
       =\sum_{k\geq0} v^{2k(2\ell+1)+2m}(v^{m+\ell+2}+v^{-(m+\ell+2)})\qbinom{2m+2}{2k+1}.
    \end{align}\end{proposition}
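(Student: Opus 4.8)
The plan is to reduce \eqref{eq:even-odd-reduced2} to the two identities of Proposition \ref{propA.4} together with Lemma \ref{lem:qbinom1}, in the same spirit as the proofs of \eqref{eq:odd-even-reduced}--\eqref{2m+1 and 2k+1}. First I would apply the $v$-Pascal rule \eqref{fkhgs} (with $\epsilon=1$, $a=2m+2$, $t=2k+1$) to every occurrence of $\qbinom{2m+2}{2k+1}$, which gives
\[
\qbinom{2m+2}{2k+1}=v^{-(2k+1)}\qbinom{2m+1}{2k+1}+v^{\,2m+1-2k}\qbinom{2m+1}{2k}.
\]
Substituting this into both sides of \eqref{eq:even-odd-reduced2} rewrites each side as a sum of a $\qbinom{2m+1}{2k+1}$-piece and a $\qbinom{2m+1}{2k}$-piece, and the $k$-dependent powers of $v$ collect to $v^{4k(\ell+1)}$ on the left-hand side (up to the overall shifts $v^{-1}$ and $v^{2m+1}$) and to $v^{4k\ell}$ on the right-hand side.

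Next I would feed the two left-hand pieces into Proposition \ref{propA.4}: the $\qbinom{2m+1}{2k+1}$-piece is $v^{-1}$ times the left side of \eqref{2m+1 and 2k+1}, hence equals $\sum_k v^{4k\ell+2m-2}(v^{m+\ell+1}+v^{-m-\ell-1})\qbinom{2m+1}{2k+1}$; the $\qbinom{2m+1}{2k}$-piece is $v^{2m+1}$ times the left side of \eqref{eq:odd-even-reduced}, hence equals $\sum_k v^{4k\ell+4m+2}(v^{m+\ell+1}+v^{-m-\ell-1})\qbinom{2m+1}{2k}$. After this substitution both sides of \eqref{eq:even-odd-reduced2} are expressed purely through $\sum_k v^{4k\ell}(\cdots)\qbinom{2m+1}{2k+1}$ and $\sum_k v^{4k\ell}(\cdots)\qbinom{2m+1}{2k}$. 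Forming the difference of the two sides, the ``outer'' monomials ($v^{m-\ell-3}$ in the odd-index coefficient, $v^{5m+\ell+3}$ in the even-index coefficient) cancel in pairs, and after recombining the even- and odd-index binomials one is left with
\[
{\rm LHS}\eqref{eq:even-odd-reduced2}-{\rm RHS}\eqref{eq:even-odd-reduced2}
= v^{3m-\ell}(v-v^{-1})\sum_{j=0}^{2m+1}(-1)^j v^{2\ell j}\qbinom{2m+1}{j}.
\]
This last sum vanishes by Lemma \ref{lem:qbinom1} with $p=2m+1$ and $d=-2\ell$, since $|d|=2\ell\le 2m=p-1$ (using the hypothesis $\ell<m$) and $d\equiv p-1\pmod 2$; hence \eqref{eq:even-odd-reduced2} holds.

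The only delicate point — and the part I expect to be the main obstacle — is the bookkeeping in the last step: one must track the four monomials produced on each side and verify that their difference really collapses to a single alternating binomial sum with no stray terms. This is a finite mechanical computation, but it is easy to make sign or exponent slips, so I would carry it out by isolating the coefficient of $\qbinom{2m+1}{2k+1}$ and of $\qbinom{2m+1}{2k}$ separately, checking in each case that the coefficient of the difference equals $\mp v^{4k\ell}v^{3m\pm\ell}(v-v^{-1})$, and only then recombining the two sums into $v^{3m-\ell}(v-v^{-1})\sum_{j=0}^{2m+1}(-1)^j v^{2\ell j}\qbinom{2m+1}{j}$ and invoking Lemma \ref{lem:qbinom1}.
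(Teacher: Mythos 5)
Your proposal is correct and matches the paper's proof almost exactly: the paper likewise applies the $v$-Pascal rule \eqref{fkhgs} to $\qbinom{2m+2}{2k+1}$, substitutes Proposition \ref{propA.4} into the left-hand side, and is left with the identity $\sum_k v^{4k\ell+2\ell}\qbinom{2m+1}{2k+1}=\sum_k v^{4k\ell}\qbinom{2m+1}{2k}$, which it closes via Lemma \ref{lem:L-2} (a restatement of Lemma \ref{lem:qbinom1}); your bookkeeping is correct — the monomials $v^{m-\ell-3}$ and $v^{5m+\ell+3}$ do cancel as claimed, the remaining coefficients are $-v^{3m+\ell}(v-v^{-1})$ and $v^{3m-\ell}(v-v^{-1})$, and the recombination to $v^{3m-\ell}(v-v^{-1})\sum_{j=0}^{2m+1}(-1)^j v^{2\ell j}\qbinom{2m+1}{j}$ is correct, with the vanishing justified by Lemma \ref{lem:qbinom1} exactly as you apply it.
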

\begin{proof}
By \eqref{fkhgs}, we have
\begin{align*}
    &{\rm LHS}\eqref{eq:even-odd-reduced2}\\&=\sum_{k\geq0}v^{2k(2\ell+3)}  (v^{m-\ell}+v^{-(m-\ell)})\qbinom{2m+2}{2k+1}\\
    &=\sum_{k\geq0}v^{2k(2\ell+3)}  (v^{m-\ell}+v^{-(m-\ell)})(v^{-(2k+1)}\qbinom{2m+1}{2k+1}+v^{-(2k+1)+2m+2}\qbinom{2m+1}{2k})\\
    &=\sum_{k\geq0} v^{4k(\ell+1)} (v^{m-\ell}+v^{-(m-\ell)})(v^{-1}\qbinom{2m+1}{2k+1}+v^{2m+1}\qbinom{2m+1}{2k})\\
    &=\sum_{k\geq0}  v^{4k\ell}(v^{m+\ell+1}+v^{-(m+\ell+1)})(v^{2m-2}\qbinom{2m+1}{2k+1}+v^{4m+2}\qbinom{2m+1}{2k})\\
\end{align*}
where the last equality follows from Proposition \ref{propA.4}.

On the other hand,
\begin{align*}
&{\rm RHS}\eqref{eq:even-odd-reduced2}\\&=\sum_{k\geq0} v^{2k(2\ell+1)+2m}(v^{m+\ell+2}+v^{-(m+\ell+2)})(v^{-(2k+1)}\qbinom{2m+1}{2k+1}+v^{-(2k+1)+2m+2}\qbinom{2m+1}{2k})\\
    &=\sum_{k\geq0} v^{4k\ell} (v^{m+\ell+2}+v^{-(m+\ell+2)})(v^{2m-1}\qbinom{2m+1}{2k+1}+v^{4m+1}\qbinom{2m+1}{2k}).
\end{align*}
Then it suffices to show
\begin{align*}
    &\sum_{k\geq0}  v^{4k\ell}\big(v^{2m-2}\cdot (v^{m+\ell+1}+v^{-(m+\ell+1)})-(v^{m+\ell+2}+v^{-(m+\ell+2)}) \cdot v^{2m-1}\big)\qbinom{2m+1}{2k+1}\\
    &=\sum_{k\geq0} v^{4k\ell} \big(v^{4m+1}\cdot(v^{m+\ell+2}+v^{-(m+\ell+2)})-(v^{m+\ell+1}+v^{-(m+\ell+1)})\cdot v^{4m+2}\big)\qbinom{2m+1}{2k},
\end{align*}
or equivalently,
\begin{align*}
    &\sum_{k\geq0} v^{4k\ell+2\ell}\qbinom{2m+1}{2k+1}=\sum_{k\geq0} v^{4k\ell} \qbinom{2m+1}{2k},
\end{align*}
which follows from \eqref{5.2} by taking $d=-2\ell$.
\end{proof}

\end{document}